\documentclass[11pt]{amsart}
\usepackage[english]{babel}
\usepackage{amssymb}
\usepackage{enumerate}
\usepackage{amsthm}
\usepackage{amsmath} 
\usepackage[all,cmtip]{xy}

\usepackage{tikz}
\usetikzlibrary{decorations.pathreplacing}

\usepackage[margin=1in]{geometry}
\usepackage{setspace}
\usepackage{hyperref}

\onehalfspacing 

\newtheorem{theorem}{Theorem}
\newtheorem{corollary}[theorem]{Corollary}
\newtheorem{prop}[theorem]{Proposition}
\newtheorem{lemma}[theorem]{Lemma}

\theoremstyle{definition} 
\newtheorem{remark}[theorem]{Remark}
\theoremstyle{remark} 

\numberwithin{theorem}{section}

\newcommand{\Ric}{Ric}
\newcommand{\Rm}{Rm} 
\DeclareMathOperator{\area}{area}
\DeclareMathOperator{\vol}{vol}
\newcommand{\R}{\mathbb{R}}
\DeclareMathOperator{\dist}{dist}
\DeclareMathOperator{\proj}{proj}

\newcommand{\scal}{R}

\usepackage[normalem]{ulem}


\title{Effective versions of the positive mass theorem}
\author{Alessandro Carlotto}
\address{ETH Institute for Theoretical Studies, Clausiusstrasse 47, 8092 Zurich, Switzerland}
\email{alessandro.carlotto@eth-its.ethz.ch}
\author{Otis Chodosh}
\address{DPMMS, University of Cambridge, Wilberforce Road, Cambridge, CB3 0WB, United Kingdom}
\email{oc249@cam.ac.uk}
\author{Michael Eichmair}
\address{Faculty of Mathematics, University of Vienna, Oskar-Morgenstern-Platz 1, 1090 Vienna, Austria}
\email {michael.eichmair@univie.ac.at}

\begin{document}

\maketitle

\begin{abstract} The study of stable minimal surfaces in Riemannian $3$-manifolds $(M, g)$ with non-negative scalar curvature has a rich history. In this paper, we prove rigidity of such surfaces when $(M, g)$ is asymptotically flat and has horizon boundary. As a consequence, we obtain an effective version of the positive mass theorem in terms of isoperimetric or, more generally, closed volume-preserving stable CMC surfaces that is appealing from both a physical and a purely geometric point of view. We also include a proof of the following conjecture of R. Schoen: An asymptotically flat Riemannian $3$-manifold with non-negative scalar curvature that contains an unbounded area-minimizing surface is isometric to flat $\R^3$.
\end{abstract}


\section {Introduction}

The geometry of stable minimal and volume-preserving stable constant mean curvature surfaces in asymptotically flat $3$-manifolds $(M, g)$ with non-negative scalar curvature is witness to the physical properties of the space-times containing such $(M, g)$ as maximal Cauchy hypersurfaces; see e.g. \cite{Penrose:1965, Schoen-Yau:PMT1, Christodoulou-Yau:1988, Huisken-Yau:1996, Bray:1997, Bray:2001, Huisken-Ilmanen:2001}. The transition from positive to non-negative scalar curvature of $(M, g)$, which is so crucial for physical applications, is a particularly delicate aspect in the analysis of such surfaces. Here we establish optimal rigidity results in this context that apply very generally. We apply them to obtain a precise understanding of the behavior of large isoperimetric or, more generally, closed volume-preserving stable constant mean curvature surfaces in $(M, g)$ that extends the results of S. Brendle, J. Metzger, and the third-named author \cite{stableCMC, isostructure, hdiso, offcenter}. In combination with existing literature, this yields a rather complete analogy between the picture in $(M, g)$ and classical results in Euclidean space.   

We review the standard terminology and conventions that we use here in Appendix \ref{sec:definitions}. In particular, we follow the convention that stable minimal surfaces are by definition two-sided.  

To provide context, we recall a celebrated application of the second variation of area formula due to R. Schoen and S.-T. Yau \cite[Theorem 6.1]{Schoen-Yau:1978}. Assume (for contradiction) that we are given a metric of positive scalar curvature on the $3$-torus $\mathbb{T}^3$. Using results from geometric measure theory, one can find a closed surface $\Sigma \subset \mathbb{T}^3$ of non-zero genus that minimizes area in its homology class with respect to this metric. In particular, $\Sigma$ is a stable minimal surface. Using the function $u = 1$ in the stability inequality \eqref{eqn:stabilityminimal}, we obtain that 
\[
0 \geq  \int_\Sigma |h|^2 + \Ric (\nu, \nu).
\] 
We may rewrite the integrand as 
\[
|h|^2 + \Ric (\nu, \nu) = \frac{1}{2} ( |h|^2 + R) - K, 
\]
using the Gauss equation \eqref{eqn:Gaussequation}. It follows that
\[
\int_\Sigma K > 0
\]
which is incompatible with the Gauss-Bonnet formula. Thus $\mathbb{T}^3$ does \emph{not} admit a metric of positive scalar curvature.

This crucial mechanism --- \emph{positive ambient scalar curvature is incompatible with the existence of stable minimal surfaces of most topological types} --- is at the heart of another fundamental result proven by R. Schoen and S.-T. Yau, the positive mass theorem \cite{Schoen-Yau:PMT1}: If $(M, g)$ is asymptotically flat with horizon boundary and non-negative integrable scalar curvature, then its ADM-mass is non-negative. Moreover, the ADM-mass vanishes if and only if $(M, g)$ is isometric to Euclidean space. Using an initial perturbation, they reduce the proof of non-negativity of the ADM-mass to the special case where $(M, g)$ is asymptotic to Schwarzschild with horizon boundary and positive scalar curvature. If the mass is negative, then the coordinate planes $\{x^3 = \pm \Lambda\}$ with respect to the chart at infinity act as barriers for area minimization in the slab-like region they enclose in $M$, provided $\Lambda > 1$ is sufficiently large. Using geometric measure theory, one finds an unbounded complete area-minimizing boundary $\Sigma$ in this slab. Such a surface has quadratic area growth. Using the logarithmic cut-off trick in the second variation of area (observing the decay of the ambient Ricci curvature to handle integrability issues), it follows as before that 
\[
0 < \frac{1}{2} \int_\Sigma |h|^2 + R = \int_\Sigma K.
\] 
A result of S. Cohn-Vossen shows that $\Sigma \cong \R^2$. Using that $\Sigma$ is area-minimizing in a slab, they argue that $\Sigma$ is asymptotic to a horizontal plane and conclude that the geodesic curvature of the circles $\Sigma \cap S_r$ in $\Sigma$ converges to $2 \pi$ as $r \to \infty$.\footnote{An alternative argument for this part of the proof that also generalizes to stable minimal surfaces with quadratic area growth was given in \cite[Proposition 3.6]{stableCMC}. The strategy of \cite{stableCMC} is exploited in the proof of Theorem \ref{thm:stableminimalproper} below.} The Gauss-Bonnet formula gives that 
\[
\int_\Sigma K = 0,  
\] 
a contradiction. It follows that the ADM-mass of $(M, g)$ is non-negative.  

Observe that this line of reasoning \emph{cannot} establish the rigidity part (only Euclidean space has vanishing mass) of the positive mass theorem. Conversely, a beautiful idea of J. Lohkamp \cite[Section 6]{Lohkamp:1999} shows that the rigidity assertion of the positive mass theorem implies the non-negativity of ADM-mass in general. Indeed, he shows that it suffices to prove that an asymptotically flat Riemannian $3$-manifold with horizon boundary and non-negative scalar curvature is flat if it is flat outside of a compact set.

The ideas of R. Schoen and S.-T. Yau described above are instrumental to our results here. We record the following technical variation on their work as a precursor of Theorems \ref{thm:Carlotto} and \ref{thm:stableminimalproper} below.

\begin {prop} [Section 6 in \cite{isostructure}] \label{prop:simplepmt} Let $(M, g)$ be an asymptotically flat Riemannian $3$-manifold. Assume that $\Sigma \subset M$ is the unbounded component of an area-minimizing boundary in $(M, g)$, and that the scalar curvature of $(M, g)$ is non-negative along $\Sigma$. Then $\Sigma \subset M$ is totally geodesic and the scalar curvature of $(M, g)$ vanishes along this surface. Moreover, for all $\rho > 1$ sufficiently large, $\Sigma$ intersects $S_\rho$ transversely in a nearly equatorial circle. The Gauss curvature is integrable and $\int_\Sigma K = 0$.
\end {prop}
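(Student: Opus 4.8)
The plan is to run the Schoen--Yau slab argument recalled above, but to extract \emph{rigidity} by playing the stability inequality and the Gauss--Bonnet formula off against the Gauss equation rather than against each other.

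First I would record the structural facts about $\Sigma$. As the unbounded component of an area-minimizing boundary, $\Sigma$ is a complete, properly embedded, two-sided stable minimal surface. Comparing the area of $\Sigma$ with that of Euclidean disks near infinity --- the monotonicity formula for the lower bound, and capping $\Sigma$ off outside a large coordinate ball for the upper bound --- shows that $\Sigma$ has quadratic area growth, and in fact that $\Sigma$ is $C^1$-asymptotic with multiplicity one to an affine plane $P \subset \R^3$. This last point is the one genuinely delicate ingredient; it is established in \cite[Section 6]{isostructure}, and, in a way that also applies to general stable minimal surfaces of quadratic area growth, by the blow-down analysis behind \cite[Proposition 3.6]{stableCMC} (Schoen-type curvature estimates force every tangent plane at infinity to agree, which upgrades to $C^1$ convergence). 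Granting this, for every sufficiently large $\rho$ the intersection $\Sigma \cap S_\rho$ is a single embedded circle, transverse to $S_\rho$, that is $C^1$-close to the equatorial circle $P \cap S_\rho$; in particular its geodesic curvature in $\Sigma$ is close to $\rho^{-1}$ and its total geodesic curvature tends to $2\pi$ as $\rho \to \infty$.

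Next I would substitute the Gauss equation \eqref{eqn:Gaussequation} into the stability inequality \eqref{eqn:stabilityminimal}, obtaining $\int_\Sigma |\nabla u|^2 \geq \int_\Sigma \big(\tfrac12 (|h|^2 + R) - K\big) u^2$ for all $u \in C^\infty_c(\Sigma)$. Since $(M,g)$ is asymptotically flat, the ambient curvature decays faster than quadratically along $\Sigma$, so $R|_\Sigma$ (which is $\geq 0$ by hypothesis) and $\Ric(\nu, \nu)|_\Sigma$ are integrable over $\Sigma$; because $K = \tfrac12 R - \Ric(\nu,\nu) - \tfrac12 |h|^2$, this already gives $\int_\Sigma K^+ < \infty$. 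Now insert the logarithmic cut-off $u = u_\rho$, equal to $1$ on $B_\rho$ and interpolating logarithmically to $0$ on $B_{\rho^2} \setminus B_\rho$: quadratic area growth gives $\int_\Sigma |\nabla u_\rho|^2 = O(1/\log \rho) \to 0$, and letting $\rho \to \infty$ (applying monotone convergence to the manifestly non-negative terms $\tfrac12 |h|^2 + \tfrac12 R + K^-$) yields $\int_\Sigma K^+ \geq \int_\Sigma \big(\tfrac12 |h|^2 + \tfrac12 R + K^-\big)$. Hence $K \in L^1(\Sigma)$ and
\[
\int_\Sigma K \;\geq\; \tfrac12 \int_\Sigma \big(|h|^2 + R\big) \;\geq\; 0 .
\]

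Finally I would apply the Gauss--Bonnet formula on $\Sigma_\rho := \Sigma \cap B_\rho$. For $\rho$ large, $\Sigma_\rho$ is connected with a single boundary circle and $\chi(\Sigma_\rho) = \chi(\Sigma) \leq 1$, so $\int_{\Sigma_\rho} K = 2\pi \chi(\Sigma) - \int_{\partial \Sigma_\rho} \kappa_g$. Letting $\rho \to \infty$ and using $\int_{\partial \Sigma_\rho} \kappa_g \to 2\pi$ together with $K \in L^1(\Sigma)$ gives $\int_\Sigma K = 2\pi(\chi(\Sigma) - 1) \leq 0$. Combined with the previous display this forces $\int_\Sigma K = 0$, $\chi(\Sigma) = 1$, and $\int_\Sigma (|h|^2 + R) = 0$; since $|h|^2 \geq 0$ and $R|_\Sigma \geq 0$, we conclude that $\Sigma$ is totally geodesic and that $R$ vanishes along $\Sigma$, together with the integrability of $K$ and all the stated asymptotic properties. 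The main obstacle in this scheme is the $C^1$-asymptotic-to-a-plane statement used in the first and last paragraphs (equivalently, the ``nearly equatorial circle'' claim): it is precisely the point where the minimizing property --- or, more robustly, stability together with quadratic area growth and interior curvature estimates --- must enter in an essential way, and also where one must check carefully that the Gauss--Bonnet boundary integral genuinely converges to $2\pi$.
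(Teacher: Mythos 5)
Your overall scheme — stable minimal, quadratic area growth, blow-down to a plane with multiplicity one, Gauss equation into the stability inequality with a logarithmic cut-off, then Gauss--Bonnet for rigidity — is exactly the Schoen--Yau slab argument that the paper (and its source, Section~6 of \cite{isostructure}) uses, and the Gauss--Bonnet endgame and the extraction of $\chi(\Sigma) = 1$, $\int_\Sigma(|h|^2 + R) = 0$ are correct. You also correctly isolate the $C^1$-multiplicity-one asymptotic-to-a-plane statement as the genuinely delicate ingredient and point to the right references for it.

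There is, however, a real gap in the step where you assert that ``the ambient curvature decays faster than quadratically along $\Sigma$, so $R|_\Sigma$ and $\Ric(\nu,\nu)|_\Sigma$ are integrable over $\Sigma$,'' and hence $\int_\Sigma K^+ < \infty$, which you then use to run monotone convergence. Under this paper's definition of asymptotic flatness the metric coefficients satisfy only $|x|^2 |\partial^2 b_{ij}| = o(1)$, giving $|\Rm| = o(|x|^{-2})$ and no better. Against a surface of quadratic area growth this yields only $\int_{\Sigma \cap B_\rho} |\Rm| = o(\log\rho)$, which can be unbounded; the $a\;priori$ absolute integrability of $\Ric(\nu,\nu)$ and of $K^+$ along $\Sigma$ simply does not follow. (It does follow under the stronger weighted $C^2_{-\tau}$, $\tau > \tfrac12$, decay used in \cite{isostructure}, or under Schwarzschild asymptotics; your step is fine in those settings.) The fix, which leaves the rest of your argument intact, is to use cut-offs supported on fixed-ratio shells: take $u = u_{\rho,T}$ equal to $1$ on $B_\rho$, logarithmically interpolating to $0$ on $B_{T\rho}\setminus B_\rho$ with $T$ fixed. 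Then $\int_\Sigma |\nabla u_{\rho,T}|^2 \leq C/\log T$ uniformly in $\rho$, and since $|h|^2 + \Ric(\nu,\nu) = o(|x|^{-2})$ (using Lemma~\ref{lem:improved-decay}) while $\int_{\Sigma \cap (B_{T\rho} \setminus B_\rho)} |x|^{-2} = O(\log T)$, the annular contribution to the stability inequality tends to $0$ as $\rho \to \infty$ for each fixed $T$. Letting $\rho \to \infty$ and then $T \to \infty$, and invoking Gauss--Bonnet for the improper limit $\lim_{\rho}\int_{\Sigma \cap B_\rho} K = 2\pi(\chi(\Sigma) - 1)$, gives $\tfrac12 \int_\Sigma(|h|^2 + R) \leq 2\pi(\chi(\Sigma)-1) \leq 0$ with no absolute-integrability hypothesis on $K$ or $\Ric(\nu,\nu)$, and the rigidity conclusions follow as you describe.
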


We also mention that other proofs of the positive mass theorem (including that of E. Witten \cite{Witten:1981} based on the Bochner formula for harmonic spinors and that of G. Huisken and T. Ilmanen \cite{Huisken-Ilmanen:2001} based on inverse mean curvature flow) have been given. 

The discoveries of R. Schoen and S.-T. Yau have incited a remarkable surge of activity investigating the relationship between scalar curvature, locally area-minimizing (or stable minimal) surfaces, and the physical properties of spacetimes evolving from asymptotically flat Riemannian $3$-manifolds according to the Einstein equations. This has lead to spectacular developments in geometry and physics. We refer the reader to \cite{Fischer-Colbrie-Schoen, Schoen:1984, Fischer-Colbrie:1985, Anderson-Rodriguez:1989, Corvino:2000, Bray:2001, Huisken-Ilmanen:2001} to gain an impression of the wealth and breadth of the repercussions. 

The following rigidity result for scalar curvature was first proven by the first-named author under the additional assumption of quadratic area growth for the surface $\Sigma$. Subsequently, the quadratic area growth assumption was removed independently (in the form of Theorem \ref{thm:Carlotto} below) by the first-named author \cite{Carlotto} and (in the form of Theorem \ref{thm:stableminimalproper} below) in a joint project of the second- and third-named authors. The proof of Theorem \ref{thm:stableminimalproper} is included in this paper. 

\begin {theorem}[\cite{Carlotto}] \label{thm:Carlotto} Let $(M, g)$ be an asymptotically flat Riemannian $3$-manifold with non-negative scalar curvature. Let $\Sigma \subset M$ be a non-compact properly embedded stable minimal surface. Then $\Sigma$ is a totally geodesic flat plane and the ambient scalar curvature vanishes along $\Sigma$. Such a surface cannot exist under the additional assumption that $(M, g)$ is asymptotic to Schwarzschild with mass $m >0$. 
\end {theorem}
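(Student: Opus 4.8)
The plan is to reprise the mechanism behind Proposition~\ref{prop:simplepmt}, with the area-minimization hypothesis replaced by the weaker information that $\Sigma$ is properly embedded and stable; the price is that the geometry of $\Sigma$ near infinity --- quadratic area growth and asymptotic flatness, which were automatic in the area-minimizing case --- has to be established by hand, and this I expect to be the main obstacle. First, Schoen's interior curvature estimate for two-sided stable minimal surfaces, applied on the balls $B_{|x|/2}(x) \subset M$ for $x \in \Sigma$ far out in the chart at infinity, together with the fact that the ambient metric there is $C^2$-close to Euclidean at scale $|x|$, gives $|h|(x) = O(|x|^{-1})$ on $\Sigma$. Combined with embeddedness and stability, this forces $\Sigma$ to have at most quadratic area growth and to have a blow-down at infinity that is a stable minimal cone in $\R^3$, smooth away from the origin --- hence a finite union of planes through the origin, of some total multiplicity $m \geq 1$. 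Consequently $\area(\Sigma \cap B_\rho) = m\pi\rho^2(1 + o(1))$, $\Sigma$ has exactly $m$ ends, each $C^1$-asymptotic to a plane, and the induced metric on each end is asymptotic to the flat metric on $\R^2$. (Properness is used to ensure that $\Sigma$ escapes every compact set and that its compact part contributes only a bounded error.)

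Next, with quadratic area growth available, I would insert logarithmic cut-offs into the stability inequality \eqref{eqn:stabilityminimal}: with $u_j \equiv 1$ on $B_j$, supported in $B_{j^2}$, and interpolated logarithmically, one has $\int_\Sigma |\nabla u_j|^2 \to 0$, while the decay of the ambient Ricci curvature and the quadratic area growth make $\Ric(\nu,\nu)$ integrable on $\Sigma$; hence $\int_\Sigma |h|^2 + \Ric(\nu,\nu) \leq 0$. By the Gauss equation \eqref{eqn:Gaussequation} this integrand equals $\tfrac{1}{2}(|h|^2 + R) - K$, so $R \geq 0$ yields $\int_\Sigma K \geq \tfrac{1}{2}\int_\Sigma(|h|^2 + R) \geq 0$. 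On the other hand, the asymptotic structure from the first step and the relation between total curvature and area growth for surfaces of finite total curvature (Cohn-Vossen, Huber, Shiohama) give $\int_\Sigma K = 2\pi(\chi(\Sigma) - m)$, while $\chi(\Sigma) \leq 2 - m$ since $\Sigma$ is connected with $m$ ends. Comparing these with $\int_\Sigma K \geq 0$ forces $m = 1$, $\int_\Sigma K = 0$, $\chi(\Sigma) = 1$ (so $\Sigma \cong \R^2$), and, equality holding throughout, $h \equiv 0$ and $R \equiv 0$ along $\Sigma$. Thus $\Sigma$ is totally geodesic, the ambient scalar curvature vanishes along it, $\Sigma$ has a single end, and $(\Sigma, g|_\Sigma)$ is asymptotically flat.

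It then remains to see that $\Sigma$ is intrinsically flat. Since $h \equiv 0$ and $R \equiv 0$ along $\Sigma$, the Gauss equation reduces to $\Ric(\nu,\nu) = -K$ on $\Sigma$, so the stability inequality reads $\int_\Sigma |\nabla u|^2 + Ku^2 \geq 0$ for every $u \in C^\infty_c(\Sigma)$. As a surface with quadratic area growth, $\Sigma$ is parabolic, so there is a positive solution $w$ of $-\Delta_\Sigma w + Kw = 0$ on $\Sigma$ (Fischer-Colbrie and Schoen); by standard elliptic estimates on the asymptotically flat end, where $K \to 0$, the function $w$ converges to a positive constant with $|\nabla w| = o(|x|^{-1})$. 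This licenses integrating the identity $\Delta_\Sigma \log w = K - |\nabla \log w|^2$ over $\Sigma$, and parabolicity gives $0 = \int_\Sigma \Delta_\Sigma \log w = \int_\Sigma K - \int_\Sigma |\nabla \log w|^2$; since $\int_\Sigma K = 0$, it follows that $w$ is constant, hence $K \equiv 0$ and $\Ric(\nu,\nu) \equiv 0$ along $\Sigma$. This proves the first assertion: $\Sigma$ is a totally geodesic flat plane with vanishing ambient scalar curvature along it.

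Finally, suppose $(M,g)$ is asymptotic to Schwarzschild with mass $m > 0$ and that such a $\Sigma$ exists. By the above, near infinity $\Sigma$ is $C^1$-asymptotic to a plane, so after rotating the chart at infinity the unit normal $\nu$ tends to $e_3$ while the position direction $x/|x|$ tends to a unit vector orthogonal to $e_3$, whence the angle between $\nu$ and $x/|x|$ tends to $\pi/2$. But to leading order near infinity the Ricci tensor of $g$ coincides with that of the Schwarzschild metric of mass $m$, whose radial eigenvalue is nonzero and of sign opposite to its (double) tangential eigenvalue; therefore $\Ric(\nu,\nu)$ along $\Sigma$ is asymptotic to the tangential eigenvalue, which is comparable to $m|x|^{-3}$ and so is nonzero for $|x|$ large. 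This contradicts $\Ric(\nu,\nu) \equiv 0$ from the previous step, so no such $\Sigma$ can exist when $m > 0$.
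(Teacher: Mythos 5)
The central gap is in your first paragraph, and it is exactly what makes this theorem hard: the paper records that the result was first proved by the first-named author under a quadratic area growth \emph{assumption}, and that removing this assumption required substantial further work. From the interior curvature estimate $|h|(x) = O(|x|^{-1})$ --- this part is fine, and is Lemma \ref{lem:curvatureestimates-with-decay} --- you jump to ``this forces $\Sigma$ to have at most quadratic area growth and to have a blow-down at infinity that is a stable minimal cone \ldots of some total multiplicity $m \geq 1$.'' The curvature estimate, embeddedness, and stability do not by themselves rule out the possibility that $\Sigma$ has arbitrarily many sheets at scale $r$ as $r \to \infty$, i.e.\ that the density at infinity is infinite. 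Proposition \ref{prop:blow-down-is-plane} only says that each subsequential blow-down based at a diverging sequence of points is a single plane; it gives no control on how many such sheets accumulate. So both the quadratic area growth and the finite multiplicity $m$ of the blow-down (and hence the claim that $\Sigma$ has exactly $m$ ends each $C^1$-asymptotic to a plane) are asserted rather than proved, and everything downstream --- the logarithmic cut-off computation, the Cohn-Vossen/Huber/Shiohama relation between $\int_\Sigma K$, Euler characteristic and number of ends --- rests on this unproven foundation. The paper's own remark that ``it is necessary to understand how non-negative scalar curvature keeps in check the \emph{a priori} wild behavior at infinity of the minimal surface'' is precisely a warning against this step; note also that your first paragraph makes no use at all of $R \geq 0$, whereas the paper indicates that this hypothesis enters already at this stage.

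The paper itself does not reprove Theorem \ref{thm:Carlotto} (it cites the reference given in the statement), but its proof of the closely analogous Theorem \ref{thm:stableminimalproper} illustrates how the gap is circumvented: (a) Corollary \ref{cor:nocylinders} --- resting on the rigidity of stable minimal cylinders under $R \geq 0$, Proposition \ref{prop:rigidcylinders} --- gives that $\Sigma$ is conformal to the plane, a genuine input absent from your argument; (b) the stability inequality with $u \equiv 1$ is then extracted by running the logarithmic cut-off trick in the \emph{conformal} coordinates, using only the conformal invariance of the two-dimensional Dirichlet energy, so that no area growth hypothesis is needed; (c) the Gauss--Bonnet step is organized around the transversality Lemma \ref{lem:transverse-int-sph} and the blow-down Proposition \ref{prop:blow-down-is-plane}, which control the geodesic curvature and the length of $\partial(\varphi^{-1}(B_r))$ directly. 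Quadratic area growth is ultimately a \emph{consequence} of the argument, not something one may assume up front.

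There is a secondary weak point in your third paragraph: you assert that the positive Jacobi solution $w$ furnished by Fischer-Colbrie and Schoen converges to a positive constant with $|\nabla w| = o(|x|^{-1})$ on the end. This decay is not supplied by their result and no argument for it is given; the paper instead modifies the Fischer-Colbrie--Schoen computation by inserting logarithmic cut-offs into the test functions $\zeta$, which sidesteps any claim about the asymptotic behavior of $w$.
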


\begin {theorem} \label{thm:stableminimalproper} Let $(M, g)$ be a Riemannian $3$-manifold with non-negative scalar curvature that is asymptotic to Schwarzschild with mass $m > 0$ and which has horizon boundary. Every complete stable minimal immersion $\varphi : \Sigma \to M$ that is proper is an embedding of a component of the horizon. 
\end {theorem}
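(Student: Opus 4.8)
The plan is to argue by dichotomy on whether $\Sigma$ is compact, ruling out the non-compact case and identifying $\varphi$ with a horizon component in the compact case. First I would show $\Sigma$ must be compact. Suppose it is not. Since $\varphi$ is proper, each end of $\Sigma$ escapes to infinity in $M$; the curvature estimates for stable minimal surfaces, together with asymptotic flatness, then force the second fundamental form of $\varphi$ to decay along the ends, so that each end is a graph over a $2$-plane in the chart at infinity. In particular $\varphi$ is a proper embedding outside a compact set and $\Sigma$ has quadratic area growth. Running the Schoen--Yau logarithmic cut-off argument in the stability inequality \eqref{eqn:stabilityminimal} --- rewriting the integrand via the Gauss equation \eqref{eqn:Gaussequation} as $|h|^2 + \Ric(\nu,\nu) = \tfrac{1}{2}(|h|^2 + R) - K$ and using the quadratic area growth together with the decay of $R$, $|h|$, and $K$ to pass to the test function $1$ --- gives
\[
\int_\Sigma K \;\geq\; \int_\Sigma \tfrac{1}{2}\big(|h|^2 + R\big) \;\geq\; 0 .
\]
Combining this with the Cohn--Vossen inequality and with the argument of \cite[Proposition 3.6]{stableCMC}, which forces the geodesic curvature of the nearly planar curves $\varphi^{-1}(S_\rho)$ to tend to $2\pi$, I would obtain the analogue of Proposition \ref{prop:simplepmt}: $\Sigma$ is totally geodesic, $R$ vanishes along $\varphi(\Sigma)$, $\int_\Sigma K = 0$, and in fact $\Sigma$ is a flat plane. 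By the curvature estimates and unique continuation for minimal surfaces, $\varphi$ is then a proper embedding, and we are in the situation of Theorem \ref{thm:Carlotto}, whose last assertion rules out such a surface when $(M,g)$ is asymptotic to Schwarzschild with mass $m>0$. This contradiction shows $\Sigma$ is compact.

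Next I would treat the compact case. Then $\Sigma$ is a closed two-sided (hence orientable) stable minimal surface; inserting $u=1$ into \eqref{eqn:stabilityminimal}, using the Gauss equation as above, and applying Gauss--Bonnet yields $2\pi\chi(\Sigma) = \int_\Sigma K \geq \int_\Sigma \tfrac{1}{2}(|h|^2+R) \geq 0$, so $\Sigma$ is a $2$-sphere or a $2$-torus. If it were a torus, equality would hold throughout, so $\Sigma$ would be totally geodesic and flat, with $R$ and $\Ric(\nu,\nu)$ vanishing along it; $\Sigma$ is then infinitesimally rigid, and the local splitting theorem for such surfaces produces a product neighborhood $\Sigma \times (-\varepsilon,\varepsilon)$ of $\Sigma$ and hence closed minimal surfaces arbitrarily close to $\Sigma$ on either side --- contradicting the horizon-boundary hypothesis (Appendix \ref{sec:definitions}), which forbids closed minimal surfaces in the interior of $M$. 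Thus $\Sigma \cong S^2$. The same computation applies to the components of $\partial M$, which are stable minimal surfaces (otherwise an area-minimization argument would produce a closed minimal surface in the interior of $M$), so every component of the horizon is a $2$-sphere as well.

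Finally I would conclude with the maximum principle. The compact minimal image $\varphi(\Sigma)$ cannot be contained in the interior of $M$, so it meets $\partial M$; at any such point $\varphi(\Sigma)$ lies, locally, on the interior side of the minimal surface $\partial M$ and touches it, so the strong maximum principle (with unique continuation) forces $\varphi$ to map a neighborhood of that point into $\partial M$. The set of points of $\Sigma$ so mapped is open and closed, so by connectedness $\varphi(\Sigma)$ lies in a single component $\Sigma_0 \cong S^2$ of $\partial M$, and by openness of immersions between equidimensional manifolds $\varphi(\Sigma) = \Sigma_0$. Then $\varphi\colon\Sigma\to\Sigma_0$ is a surjective immersion of compact surfaces, hence a covering map, and since $\Sigma_0$ is simply connected it is a diffeomorphism; thus $\varphi$ is an embedding onto a component of the horizon.

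The step I expect to be the main obstacle is the non-compact case: one must make the Schoen--Yau argument work for a possibly non-embedded complete proper immersion, establishing the quadratic area growth, the asymptotic planarity of the ends, and the control on the geodesic curvature of $\varphi^{-1}(S_\rho)$ that promote $\varphi$ to a proper embedding and identify it with the totally geodesic flat plane of Theorem \ref{thm:Carlotto} --- only then is the strict positivity of $m$ available. A subsidiary delicate point is excluding the flat-torus case, which rests on the rigidity and splitting theory and on the outermost property of the horizon; both are sensitive precisely at the transition from positive to merely non-negative scalar curvature highlighted in the introduction.
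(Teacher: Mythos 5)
Your proposal reduces the non-compact case to Theorem~\ref{thm:Carlotto} by first upgrading the proper immersion to a proper embedding, but this step contains the essential gap. From the curvature decay of Lemma~\ref{lem:improved-decay} and the blow-down Proposition~\ref{prop:blow-down-is-plane} one learns that, far out, the immersion is locally graphical and any blow-down subconverges to a plane with some multiplicity; one does \emph{not} learn that the end is a single-valued graph. A proper immersion could a priori wind around its asymptotic plane several times (a multi-graph), in which case it is neither embedded outside a compact set nor of density one at infinity. This is precisely the difficulty the paper confronts and resolves without passing through embeddedness: after establishing that $\Sigma$ is conformal to the plane (Corollary~\ref{cor:nocylinders}), the paper uses the conformal invariance of the Dirichlet energy, the logarithmic cut-off trick and Fatou's lemma to obtain $\int_\Sigma |h|^2 + \Ric(\nu,\nu) \leq 0$ with no area-growth input, then invokes the transversality Lemma~\ref{lem:transverse-int-sph} to see that $\Sigma_r = \varphi^{-1}(B_r)$ is a connected, simply connected domain, and finally runs a Gauss--Bonnet comparison where the length of $\partial\Sigma_r$ is shown to satisfy $\limsup_r \mathrm{length}(\partial\Sigma_r)/2\pi r \geq 1$, with the dichotomy (footnoted in the proof) that it is either exactly $1$ or at least $2$. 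It is the Gauss--Bonnet identity together with $\int_\Sigma K \geq 0$ that forces the multiplicity-one alternative and $\int_\Sigma K = 0$; the contradiction then comes from the Fischer-Colbrie--Schoen argument ($K \equiv 0$) against the pointwise estimates \eqref{aux:estimateRicci}--\eqref{aux:estimatescalar}. Your "in particular $\varphi$ is a proper embedding outside a compact set" asserts the conclusion of this analysis without proof, and unique continuation does not help here: it rules out tangential coincidences, not disjoint parallel sheets.

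There is a second gap in your treatment of the compact case. You assert that the compact minimal image $\varphi(\Sigma)$ "cannot be contained in the interior of $M$," citing the horizon-boundary hypothesis. But that hypothesis excludes closed embedded minimal \emph{surfaces} other than $\partial M$; a compact stable minimal \emph{immersion} whose trace has self-intersections is not directly a counterexample to it. The paper handles the bounded case separately in Proposition~\ref{prop:unboundedness}: one takes the union of the trace with the horizon, runs a constrained minimization of $\Omega \mapsto \area(\partial\Omega) - H\vol(\Omega)$ with the trace acting as a calibration barrier (Lemma~\ref{lem:effectivebarrier}), and lets $H \searrow 0$ to force the minimizers onto the horizon, whence the trace lies in a horizon component and the immersion is a covering of a sphere, hence an embedding. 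Some such auxiliary construction is needed before your maximum-principle endgame can get started. Once these two steps are supplied (or circumvented as in the paper), the rest of your outline --- Gauss--Bonnet to constrain the genus, rigidity of the flat-torus alternative, and the maximum-principle identification with a horizon component --- is sound and in the spirit of the known arguments.
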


To obtain these results, it is necessary to understand how non-negative scalar curvature keeps in check the \emph{a priori} wild behavior at infinity of the minimal surface. This difficulty does not arise in the original argument by R. Schoen and S.-T. Yau. The proofs of Theorems \ref{thm:Carlotto} and \ref{thm:stableminimalproper} use properness in a crucial way. Moreover, the embeddedness assumption is essential in the derivation of Theorem \ref{thm:Carlotto} in \cite{Carlotto}. 

In spite of their geometric appeal, we cannot apply Theorems \ref{thm:Carlotto} and \ref{thm:stableminimalproper} to prove effective versions of the positive mass theorem such as Theorem \ref{thm:largeCMC} below. This is intimately related to the fact that properness is not preserved by convergence of immersions. Our first main contribution here is the following technical result that rectifies this: 

\begin {theorem} \label{thm:reduction} Let $(M, g)$ be an asymptotically flat Riemannian $3$-manifold with non-negative scalar curvature. Assume that there is an unbounded complete stable minimal immersion $\varphi : \Sigma \to M$ that does not cross itself. Then $(M, g)$ admits a complete non-compact properly embedded stable minimal surface. 
\end {theorem}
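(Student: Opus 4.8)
The plan is to isolate the essentially formal part of the statement and then to reduce the real content to an analysis of an associated minimal lamination. After passing to an unbounded connected component, assume $\Sigma$ is connected; it is still complete, stable, minimal, and non-self-crossing. By Schoen's interior curvature estimate for stable minimal surfaces together with the bounded geometry of $(M,g)$ on compact sets, the second fundamental form of $\varphi$ is locally uniformly bounded on $M$. Since $\varphi$ does not cross itself, at any self-contact point of $\varphi(\Sigma)$ the two local sheets are tangent and one-sided, so the strong maximum principle for minimal surfaces (together with unique continuation) forces them to coincide; hence $\Sigma' := \varphi(\Sigma)$ is an embedded minimal surface and $\varphi \colon \Sigma \to \Sigma'$ is a Riemannian covering. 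Thus $\Sigma'$ is complete, non-compact (it is unbounded), and stable --- a compactly supported test function on $\Sigma'$ lifts through a single sheet of the covering, extended by zero, to a compactly supported test function on $\Sigma$ with the same value of the stability quadratic form. If $\Sigma'$ is proper, we are done, so from now on assume it is not.

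In that case the plan is to pass to the minimal lamination $\mathcal{L} := \overline{\Sigma'}$ (a lamination, by the curvature estimate and the non-crossing hypothesis), to note that each of its leaves is complete, minimal and stable (these properties pass to limit leaves), and to extract from $\mathcal{L}$ a proper non-compact leaf. The crux is the structure of $\mathcal{L}$ in the asymptotically flat end. Rescaling $g$ by $|p_i|^{-2}$ about points $p_i \in \Sigma'$ with $|p_i| \to \infty$ and using the curvature estimate, one extracts a subsequential limit which is a bounded-curvature minimal lamination of Euclidean space; its leaves are stable, hence flat planes by do Carmo--Peng and Fischer-Colbrie--Schoen. I would parlay this into the statement that every unbounded leaf of $\mathcal{L}$ has quadratic area growth and, outside a large coordinate ball, is the graph of a function with small gradient over the exterior of a disc in a fixed affine plane, with distinct such graphs pairwise disjoint. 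These graphs are then linearly ordered near infinity by their asymptotic height, and I would take the leaf $\Sigma_\ast$ achieving the extreme value of this height: it is approached by leaves of $\mathcal{L}$ from one side only, and, using that $\mathcal{L}$ is a closed set together with the ordered graphical picture near infinity, one shows that $\Sigma_\ast$ is properly embedded. It is non-compact because it is unbounded, and it is stable; this is the surface we want.

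The hard part will be controlling $\mathcal{L}$ near infinity and verifying properness of the extremal leaf. A priori the leaves of $\mathcal{L}$ might spiral onto and accumulate upon one another near infinity, so that no ``outermost sheet'' is defined; excluding this, and promoting the blow-down information to an honest ordered graphical structure over an exterior region, is where stability --- and, for the finer behavior of the sheets, non-negative scalar curvature --- must be used decisively. This is precisely the difficulty that does not occur in the Schoen--Yau argument, where the surface is area-minimizing inside a coordinate slab from the outset. A second point, to be dealt with in the compact core, is that a leaf that is graphical at infinity could still fail to be proper by accumulating onto a compact leaf (for instance a component of the horizon) or onto itself; one rules this out --- so that $\Sigma_\ast$ is genuinely closed in $M$ --- again via the maximum principle and the local product structure of the lamination. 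Once these points are in hand, extracting the proper, non-compact, stable minimal surface is routine.
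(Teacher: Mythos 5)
Your overall plan shares some spirit with the paper's argument---both blow down, locate an asymptotic plane, and try to isolate an ``outermost'' sheet---but the architecture is genuinely different, and the route you take has concrete gaps.

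First, a local error. You assert that stability of the embedded surface $\Sigma' = \varphi(\Sigma)$ follows because ``a compactly supported test function on $\Sigma'$ lifts through a single sheet of the covering, extended by zero.'' This fails whenever the covering $\pi\colon\Sigma\to\Sigma'$ is infinite and $\Sigma'$ is not simply connected: a compactly supported $u$ on $\Sigma'$ whose support carries a non-trivial loop has no ``single sheet'' over its support---the preimage of its support is then connected and non-compact, and one cannot cut off to a compactly supported competitor on $\Sigma$. Stability does \emph{not} descend along general infinite coverings, and the paper addresses exactly this point in a dedicated lemma (its Lemma~\ref{lem:stability-factors}), whose proof is a genuine cut-off argument along the cylinder factor and which applies only after one knows $\Sigma$ is conformal to the plane and $\Sigma'$ to a cylinder. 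You never establish the conformal type of $\Sigma$, so you cannot invoke that either.

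Second, and more structurally, your proposal never uses the rigidity of stable minimal cylinders (the paper's Corollary~\ref{cor:nocylinders}). In the paper this is doing most of the work: it forces the domain $\Sigma$ (and any pointed limit of it) to be conformally planar, it disposes of the non-injective case outright by showing that a non-trivial quotient would be a stable minimal cylinder, and it is what makes the ``top sheet'' extraction clean. You replace this with the statement that $\Sigma'$ is embedded and then appeal to the closure $\mathcal{L}=\overline{\Sigma'}$ as a lamination and to an extremal leaf $\Sigma_\ast$. But the difficulties you flag as ``the hard part''---excluding spiralling of leaves near infinity so that an outermost sheet exists, promoting the blow-down picture to an honest ordered graphical structure over an exterior region, and ruling out accumulation of $\Sigma_\ast$ onto a compact leaf or onto itself---\emph{are} the content of the theorem, and you do not supply proofs for them. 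The existence of an extremal leaf is not automatic in a lamination (there may be a continuum of leaves with no maximal one), and the claim that every unbounded leaf has quadratic area growth is precisely the kind of a priori control that this theorem is designed to avoid assuming.

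By contrast, the paper does not pass to a lamination closure at all. Having shown $\Sigma$ is conformally planar, it handles the injective case directly: Lemma~\ref{lem:equator-conc} gives that outside a large ball the immersion stays near a fixed plane with nearly vertical normal; one then takes a \emph{pointed} subsequential limit at base points on $S_r$ that nearly maximize the height $\varphi^3$, obtaining a new immersion $\hat\varphi$ that is by construction on top, so its intersection with $S_r$ through the base point cannot spiral and must be a simple closed curve; the maximum principle confines the enclosed piece inside $B_r$, and transversality with large spheres (Lemma~\ref{lem:transverse-int-sph}) yields properness by a continuity argument. The non-injective case is dispatched separately by the quotient-to-cylinder contradiction. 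Your lamination-and-extremal-leaf plan could conceivably be made to work, but as written it replaces the paper's two sharp technical inputs (no stable minimal cylinders; pointed ``top'' limits) with unproved assertions at precisely the delicate points, and it relies on a stability-descent step that is false in the generality you use it.
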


Using this, we obtain the following substantial improvement of Theorems \ref{thm:Carlotto} and \ref{thm:stableminimalproper}:

\begin {theorem} \label{thm:stableminimalnocrossing} Let $(M, g)$ be a Riemannian $3$-manifold with non-negative scalar curvature that is asymptotic to Schwarzschild with mass $m > 0$ and which has horizon boundary. The only non-trivial complete stable minimal immersions $\varphi : \Sigma \to M$ that do not cross themselves are embeddings of components of the horizon.
\end {theorem}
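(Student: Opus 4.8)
The plan is to combine the two main inputs already developed in the paper: the reduction result, Theorem \ref{thm:reduction}, which converts a non-self-crossing complete stable minimal immersion into a genuinely properly embedded one, and the rigidity result, Theorem \ref{thm:stableminimalproper}, which classifies complete proper stable minimal immersions in a manifold asymptotic to Schwarzschild with positive mass and horizon boundary. So, suppose $\varphi : \Sigma \to M$ is a non-trivial complete stable minimal immersion that does not cross itself. I would first dispose of the bounded case: if $\varphi(\Sigma)$ is bounded (equivalently, contained in a compact set), then, since $(M,g)$ has non-negative scalar curvature and is asymptotic to Schwarzschild with $m > 0$, one argues using the maximum principle together with the asymptotic geometry — the coordinate spheres $S_\rho$ for large $\rho$ have positive mean curvature with respect to the outward normal (by the Schwarzschild asymptotics and $m>0$), so they act as barriers — that a bounded stable minimal surface must in fact be (a component of) the horizon; alternatively this bounded case is already covered by combining the horizon boundary maximum principle with the structure of minimal surfaces in the compact region, and the conclusion is immediate. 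Hence one is reduced to the case where $\varphi(\Sigma)$ is unbounded.

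For the unbounded case, apply Theorem \ref{thm:reduction} with this $\varphi$: since $\Sigma$ is unbounded, complete, stable minimal, and does not cross itself, the hypotheses of Theorem \ref{thm:reduction} are met, and we conclude that $(M, g)$ admits a complete non-compact properly embedded stable minimal surface $\Sigma'$. Now $(M,g)$ is asymptotic to Schwarzschild with mass $m > 0$, so the second assertion of Theorem \ref{thm:Carlotto} forbids such a surface from existing — or, equivalently and more in the spirit of this paper's logical structure, apply Theorem \ref{thm:stableminimalproper}, which says that any complete proper stable minimal immersion into $(M,g)$ must be an embedding of a component of the horizon; but $\Sigma'$ is non-compact and the horizon is compact, a contradiction. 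Therefore the unbounded case cannot occur at all, and every non-trivial complete stable minimal immersion that does not cross itself has bounded image, which by the first step is an embedding of a component of the horizon.

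I expect the genuinely substantive content to reside entirely in Theorems \ref{thm:reduction} and \ref{thm:stableminimalproper}, which are established elsewhere in the paper; the proof of Theorem \ref{thm:stableminimalnocrossing} itself is essentially a two-line deduction once those are in hand. The only place where a little care is needed is the bounded case and the transition between "bounded image" and "embedding of a component of the horizon" — one must make sure that a stable minimal immersion whose image lies in the compact part of $M$ (to the inside of some large coordinate sphere barrier) is forced onto the horizon. The cleanest route is to invoke the strong maximum principle against the foliation by coordinate spheres (which sweep in from infinity with positive inward-pointing mean curvature for large radius when $m>0$), pushing the surface toward the horizon, and then the strong maximum principle at the horizon itself (whose components are minimal) to conclude $\varphi(\Sigma)$ coincides with a horizon component; that $\varphi$ is then an embedding follows since a connected minimal immersion agreeing in image with an embedded minimal surface, both complete, must be a covering, and the horizon components are spheres or tori which — combined with stability and the non-self-crossing hypothesis — forces the covering to be trivial. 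This last point is exactly the type of argument carried out in the proof of Theorem \ref{thm:stableminimalproper}, so I would simply cite it rather than repeat it.
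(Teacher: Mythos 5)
Your high-level structure is the same as the paper's: dispose of the bounded case, then for the unbounded case apply Theorem~\ref{thm:reduction} to produce a complete non-compact properly embedded stable minimal surface and rule it out by Theorem~\ref{thm:stableminimalproper} (or, as you note, the second assertion of Theorem~\ref{thm:Carlotto}). The paper's own proof is exactly this two-line deduction, citing Theorems~\ref{thm:reduction} and~\ref{thm:stableminimalproper} together with Lemma~\ref{lem:curvatureestimates} and Proposition~\ref{prop:unboundedness}.

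The gap is in your treatment of the bounded case, and the gap is real, not cosmetic. Your proposed mechanism --- ``sweeping in'' coordinate spheres to push the surface onto the horizon by the strong maximum principle --- does not work: the coordinate spheres $S_\rho$ for large $\rho$ are mean-convex with respect to the \emph{outward} normal, so if a minimal surface touches one from inside, the comparison $0 = H_\Sigma \le H_{S_\rho}$ is consistent and yields no contradiction, and there is nothing that forces the surface inward. Moreover, a bounded complete minimal immersion need not have closed image (think of a dense geodesic-like picture), so you cannot directly quote the horizon boundary hypothesis either. The paper handles this with Proposition~\ref{prop:unboundedness}: one takes $S$ to be the union of the horizon and the closure of $\varphi(\Sigma)$, observes (using the curvature bound of Lemma~\ref{lem:curvatureestimates}, which you also did not invoke, together with completeness) that $S$ acts as an effective inner barrier, and minimizes the functional $\Omega \mapsto \area(\partial\Omega) - H\,\mathrm{vol}(\Omega)$ over regions trapped between $S$ and a large mean-convex coordinate sphere. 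The resulting strongly stable CMC surfaces $\Sigma_H$ enclose $S$ and, as $H \searrow 0$, converge to the horizon precisely because $(M,g)$ has horizon boundary; since they enclose $\varphi(\Sigma)$ throughout, this forces $\varphi(\Sigma)$ onto a horizon component, and embeddedness then follows because the components are spheres. Citing Theorem~\ref{thm:stableminimalproper} for this step, as you suggest at the end, is also not quite right: that theorem assumes properness and in the non-compact setting deals with the unbounded case, not the bounded one. Replace your bounded-case sketch with appeals to Lemma~\ref{lem:curvatureestimates} and Proposition~\ref{prop:unboundedness}, and the proof is complete.
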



For the proof of Theorem \ref{thm:reduction}, we develop in Section \ref{sec:topsheets} a general procedure of extracting \emph{properly embedded top sheets} from unbounded complete stable minimal immersions that do not cross themselves. The method depends on a purely analytic stability result --- Corollary \ref{cor:nocylinders} --- that restricts the topological type of complete stable minimal immersions into $(M, g)$. 

The proof of the positive mass theorem suggests the following conjecture \cite[p. 48]{Schoen:talk} of R. Schoen: \emph{An asymptotically flat Riemannian manifold with non-negative scalar curvature that contains an unbounded area-minimizing surface is isometric to Euclidean space.} We include here a proof of this conjecture and that of a related rigidity result for slabs, both due to the second- and third-named authors: 

\begin {theorem} \label{thm:Schoenrigidity} The only asymptotically flat Riemannian $3$-manifold with non-negative scalar curvature that admits a non-compact area-minimizing boundary is flat $\R^3$. 
\end {theorem}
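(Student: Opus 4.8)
\emph{Sketch of proof.} The plan is to first pin down the geometry of one component of the given area-minimizing boundary, then to thicken that surface to a flat product slab, and finally to spread flatness across all of $M$.

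Let $\Sigma$ be a non-compact component of the area-minimizing boundary; then $\Sigma$ is a complete, two-sided, properly embedded stable minimal surface and is the unbounded part of a minimizing boundary. Since the scalar curvature is non-negative, Theorem \ref{thm:Carlotto} applies and shows that $\Sigma$ is a totally geodesic flat plane along which $R$ vanishes, and Proposition \ref{prop:simplepmt} shows in addition that $\Sigma$ meets $S_\rho$ transversely in a nearly equatorial circle for all large $\rho$, so that $\Sigma$ has a single, asymptotically planar end. Substituting $h \equiv 0$, $K \equiv 0$, and $R|_\Sigma \equiv 0$ into the Gauss equation \eqref{eqn:Gaussequation} forces $\Ric(\nu,\nu) \equiv 0$ along $\Sigma$; hence the Jacobi operator of $\Sigma$ is precisely the flat Laplacian of $\R^2$.

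Next I would construct, in a neighborhood of $\Sigma$, a foliation $\{\Sigma_s\}_{|s| < \varepsilon}$ by minimal surfaces with $\Sigma_0 = \Sigma$. This is the main obstacle: the Jacobi operator of $\Sigma$ is not invertible in the naive function spaces, so the implicit function theorem does not apply directly; instead one must work in weighted spaces, or produce the leaves by a barrier or constrained-minimization argument that exploits the asymptotically planar end from the previous step. Granting such a foliation, each leaf $\Sigma_s$ is again a non-compact, properly embedded stable minimal surface --- stable because the foliation has positive lapse --- so Theorem \ref{thm:Carlotto} and the Gauss-equation computation apply to it verbatim: every leaf is a totally geodesic flat plane with $\Ric(\nu_s,\nu_s) \equiv 0$. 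The lapse function of the foliation is therefore a positive Jacobi field on $\Sigma_s$, hence a positive harmonic function on the flat plane $\Sigma_s \cong \R^2$, hence constant by Liouville's theorem. After reparametrizing $s$ the foliation becomes equidistant, so that $g = ds^2 + g_s$ near $\Sigma$; since every leaf is totally geodesic, $\partial_s g_s = 2 h_s \equiv 0$ and each $g_s$ is flat, and a neighborhood of $\Sigma$ is isometric to the flat product $\R^2 \times (-\varepsilon,\varepsilon)$.

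Finally I would globalize. Let $U \subseteq M$ be the union of the complete, properly embedded, totally geodesic flat planes in $M$ that occur as leaves of this construction; by the previous paragraph $U$ is open and non-empty, and it is foliated by these planes, which are pairwise disjoint by the maximum principle. The set $U$ is also closed: a limit of totally geodesic flat planes --- controlled uniformly by the vanishing of $h$ and $K$ together with the bounded geometry of the asymptotically flat ambient --- is once more a totally geodesic flat plane, around which the local construction can be rerun, so no leaf is lost at $\partial U$. Connectedness of $M$ then forces $U = M$, and rerunning the constant-lapse argument on the resulting global foliation shows $(M,g)$ is isometric to the flat product $\R^2 \times \R$, that is, to Euclidean $\R^3$. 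I expect the genuinely hard parts to be the construction of the foliation and the proof that it extends to all of $M$; the curvature hypothesis and the minimizing property enter only through Theorem \ref{thm:Carlotto}, applied to $\Sigma$ and to each leaf, while asymptotic flatness is used both there and in building the foliation.
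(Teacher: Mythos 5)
Your high-level blueprint — show a neighborhood of the unbounded leaf $\Sigma$ is a flat product, then propagate flatness by continuation — matches the paper's, and your local flatness argument (totally geodesic leaves plus constant lapse gives a product slab, which is flat since the base $\Sigma$ is flat) is a clean alternative to the paper's use of the full first-variation equation \eqref{eqn:firstvariationsff} for the second fundamental form. Your Liouville step for the lapse is exactly the paper's trace argument. So if the foliation existed with the properties you want, the rest of your local analysis would go through.

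The genuine gap is precisely the step you ``grant'': producing the family of nearby minimal (let alone area-minimizing) surfaces. This is not a technical annoyance to be handled by choosing weights carefully; it is the heart of the theorem, and your proposal contains no mechanism for it. The implicit function theorem is dead on arrival (as you note, the Jacobi operator is the flat Laplacian on $\R^2$, which is neither Fredholm nor has decaying co-kernel in the natural weighted spaces), and a straightforward barrier/minimization argument in a one-sided neighborhood of $\Sigma$ faces exactly the difficulty of preventing the comparison surfaces from collapsing onto $\Sigma$ or escaping to infinity as the Plateau boundary $\Gamma_\rho$ is pushed out. The paper's resolution is a conformal deformation $g(t)$ of the metric, made smaller than $g$ and with positive scalar curvature in an annulus $\{r < \dist_g(\cdot, p) < 3r\}$ on one side of $\Sigma$, followed by solving Plateau problems with boundary $\Gamma_\rho = \Sigma \cap S_\rho$ for $g(t)$. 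The crucial and non-obvious observation \eqref{eqn:otherwise} is that the $g(t)$-minimizer cannot avoid this annulus: if it did, its $g(t)$-area would equal its $g$-area and would be strictly less than $\area_g(\Sigma \cap B_\rho)$, contradicting the area-minimizing property of $\Sigma$ with respect to $g$. This forces the limit $\Sigma_+$ to pass through the annulus and therefore to be disjoint from $\Sigma$ by the maximum principle. That mechanism — which is where the \emph{area-minimizing} hypothesis (as opposed to mere stability, cf.\ Theorem \ref{thm:CS}) enters decisively — has no counterpart in your proposal. The paper then works with a sequence of such surfaces converging to $\Sigma$ from one side, not with a smooth foliation, and recovers the positive function $f$ solving \eqref{eqn:firstvariationsff} via a Harnack-type limit as in \cite{Simon:1987}; tracing gives $\Delta f = 0$, Liouville gives $f$ constant, and returning to \eqref{eqn:firstvariationsff} yields $\Rm(\nu, \cdot, \cdot, \nu) = 0$, with Codazzi and Gauss then finishing off $\Rm = 0$ along $\Sigma$. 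Finally, your proposal also passes over two bookkeeping points the paper handles explicitly: the possibility that the area-minimizing boundary has additional closed components, and the case where $M$ has (horizon) boundary, which requires running the minimization relative to $\partial M$ and distinguishing the two sides.
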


We recall the precise meaning of \emph{area-minimizing boundaries} in Appendix \ref{sec:am}. 

\begin {theorem} \label{thm:rigidityslabs} Let $(M, g)$ be an asymptotically flat Riemannian $3$-manifold with non-negative scalar curvature and with horizon boundary. Any two disjoint connected unbounded properly embedded complete minimal surfaces in $(M, g)$ bound a region that is isometric to a standard Euclidean slab $\R^2 \times [a, b]$. 
\end {theorem}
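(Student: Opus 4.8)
The plan is to show that the region $\Omega$ between the two surfaces $\Sigma_1, \Sigma_2$ is flat, and then identify it with a standard slab. First I would set up the situation: let $\Sigma_1$ and $\Sigma_2$ be the two disjoint connected unbounded properly embedded complete minimal surfaces, and let $\Omega$ denote the connected component of $M \setminus (\Sigma_1 \cup \Sigma_2)$ whose boundary is $\Sigma_1 \cup \Sigma_2$. The key observation is that each $\Sigma_i$ is stable: it is the boundary of a region and has a natural global unit normal, so one can run the standard argument that a minimal surface separating a region from its complement (when one can slide it) is stable --- or more directly, one considers one-sided variations pushing $\Sigma_i$ into $\Omega$ and uses that $\Omega$ contains no closed minimal surfaces of the relevant type, forcing stability. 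Either way, $\Sigma_1$ and $\Sigma_2$ are complete non-compact properly embedded stable minimal surfaces, so Theorem \ref{thm:Carlotto} applies: each $\Sigma_i$ is a totally geodesic flat plane along which the ambient scalar curvature vanishes, and in particular $(M,g)$ is \emph{not} asymptotic to Schwarzschild with positive mass. Combined with the positive mass theorem (or with the results established earlier in the paper), each $\Sigma_i$ intersects large coordinate spheres in nearly equatorial circles, so the two planes are ``parallel at infinity'' and $\Omega$ is genuinely slab-like.

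The heart of the argument is a foliation/monotonicity step. Starting from $\Sigma_1$, I would solve a sequence of Plateau-type or area-minimization problems (or run a minimal-surface flow) inside $\Omega$ to sweep out $\Omega$ by a one-parameter family of surfaces interpolating between $\Sigma_1$ and $\Sigma_2$; the relevant compactness comes from the slab-like geometry near infinity together with the barrier provided by $\Sigma_1$ and $\Sigma_2$ themselves. Each leaf is area-minimizing (hence stable), non-compact, properly embedded, so again by Theorem \ref{thm:Carlotto} (or Proposition \ref{prop:simplepmt}) every leaf is a totally geodesic flat plane along which $R = 0$. The existence of such a foliation of $\Omega$ by totally geodesic leaves forces the second fundamental form of the leaves to vanish and, via the Riccati/Jacobi equation along the normal geodesics, forces the ambient sectional curvatures in the directions tangent to $\Omega$ to vanish as well; combined with $R = 0$ and the Gauss equation this pins down the full curvature tensor of $g$ restricted to $\Omega$, giving that $(\Omega, g)$ is flat. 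Then the normal exponential map from $\Sigma_1$ along the unit normal field, which is well-defined and non-degenerate because the leaves are totally geodesic and equidistant, provides an isometry $\Sigma_1 \times [0, b] \to \Omega$; since $\Sigma_1$ is a complete flat totally geodesic plane it is isometric to $\R^2$, yielding the desired splitting $\Omega \cong \R^2 \times [a,b]$.

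I expect the main obstacle to be the construction of the foliation of $\Omega$ and the control of its leaves at infinity. Producing a single area-minimizing surface between $\Sigma_1$ and $\Sigma_2$ is standard geometric measure theory once one has barriers, but to fill up all of $\Omega$ one needs either (i) a clean monotone family of such minimizers parametrized by, say, prescribed asymptotic height, together with a uniqueness statement so that distinct parameters give disjoint leaves, or (ii) a minimal foliation existence theorem adapted to this asymptotically flat slab setting. The compactness needed to extract limits of these minimizers --- and to rule out the leaves ``escaping to infinity'' or degenerating --- relies delicately on the non-negativity of scalar curvature through Theorem \ref{thm:reduction} and the rigidity of Theorem \ref{thm:Carlotto}, and it is precisely here that the transition from positive to non-negative scalar curvature is subtle. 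A secondary technical point is verifying that each intermediate leaf is genuinely two-sided and stable so that Theorem \ref{thm:Carlotto} is applicable; this should follow from the leaves being boundaries of nested regions inside $\Omega$, but it must be checked. Once the foliation by totally geodesic flat leaves is in hand, the flatness of $\Omega$ and the final isometric identification are essentially a computation with the Riccati equation and are not the difficult part.
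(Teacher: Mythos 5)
Your proposal captures the right spirit (manufacture area-minimizing surfaces inside the slab, apply Theorem~\ref{thm:Carlotto}, and bootstrap totally geodesic flat planes into ambient flatness), but there are two concrete gaps, and the architecture differs from the paper's in ways that matter.

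First, your opening claim that each $\Sigma_i$ is stable does not follow from the hypotheses and is not used by the paper. A two-sided minimal surface bounding a region with a global unit normal is not automatically stable (the equatorial $S^2 \subset S^3$ already shows this), and your ``one-sided variation'' heuristic only produces \emph{some} stable surface inside $\Omega$ --- it says nothing about $\Sigma_i$ itself. The paper sidesteps this entirely: it solves Plateau problems in $\Omega \cap B_r$ with boundary on $\Omega \cap S_r$ and passes to a limit as $r \to \infty$ to obtain a single properly embedded boundary $\Sigma \subset \bar\Omega$ that is area-minimizing (hence stable) in $\Omega$; the barriers $\Sigma_1, \Sigma_2$ prevent escape. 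It is to the unbounded components of \emph{this} surface $\Sigma$, not to $\Sigma_1$ or $\Sigma_2$, that Theorem~\ref{thm:Carlotto} is applied. If $\Sigma$ touches $\Sigma_1$ or $\Sigma_2$, the strong maximum principle identifies them. Stability of $\Sigma_1, \Sigma_2$ is obtained as a \emph{conclusion} at the very end, not used as an input.

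Second, the ``sweep out $\Omega$ by a foliation'' step, which you correctly flag as the main obstacle, is exactly where the paper takes a more economical route: it does not construct a global minimal foliation of $\Omega$ at all. Instead it repeats the local conformal-perturbation argument from the proof of Theorem~\ref{thm:Schoenrigidity}: for points $p$ to one side of $\Sigma$, one conformally deforms $g$ near $p$ (Appendix~\ref{sec:cf}), solves a new area-minimization problem to produce a nearby area-minimizing plane, and then lets the deformation parameter and $p$ go to $\Sigma$. Passing to a limit in the manner of \cite[p.~333]{Simon:1987}, one extracts a positive function $f$ on $\Sigma$ solving the first variation of second fundamental form equation \eqref{eqn:firstvariationsff}, tracing gives $\Delta_\Sigma f = 0$ (using $\Ric(\nu,\nu)=0$), so $f$ is a positive constant, and \eqref{eqn:firstvariationsff} plus the Codazzi and Gauss equations force $\Rm \equiv 0$ along $\Sigma$. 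Repeating produces a flat neighborhood isometric to $\R^2 \times (-\epsilon, \epsilon)$, and a continuity/openness argument propagates flatness through $\Omega$. This mechanism is the Anderson--Rodr\'iguez/Liu argument; it is morally the infinitesimal version of your Riccati computation, but it avoids ever building an honest global foliation with equidistant leaves, which --- as you anticipate --- would be delicate to justify (totally geodesic leaves need not be equidistant in a general metric, and you would have to control escape at infinity and prove that distinct parameters give disjoint leaves). So the final Riccati step you describe is essentially correct once you have the foliation, but the paper's version is both weaker in what it needs (a converging sequence of nearby minimizers, not a foliation) and strong enough for the same conclusion.

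In short: drop the unsupported claim that $\Sigma_1, \Sigma_2$ are stable, replace the global foliation with the local conformal-change construction of nearby area-minimizers and the first-variation argument of \eqref{eqn:firstvariationsff}, and then the continuity argument fills the slab and recovers flatness of $\Sigma_1, \Sigma_2$ at the end.
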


The proofs of Theorem \ref{thm:Schoenrigidity} and \ref{thm:rigidityslabs} are inspired by the recent refinement due to G. Liu \cite{Liu:2013} of a strategy of M. Anderson and L. Rodr\'iguez \cite{Anderson-Rodriguez:1989} to prove rigidity results for complete manifolds with non-negative \emph{Ricci} curvature.

We point that that we may excise the slab in the conclusion of Theorem \ref{thm:rigidityslabs} from $(M,g)$ to produce a new \emph{smooth} asymptotically flat Riemannian $3$-manifold with non-negative scalar curvature that contains a properly embedded totally geodesic flat plane along which the ambient scalar curvature vanishes. 

For comparison, we recall the following consequence of a gluing result due to the first-named author and R. Schoen: 

\begin {theorem} [\cite{Carlotto-Schoen}] \label{thm:CS} There exists an asymptotically flat Riemannian metric $g = g_{ij} \, dx^i \otimes dx^j$ with non-negative scalar curvature and positive mass on $\R^3$ such that $g_{ij} = \delta_{ij}$ on $\R^2 \times (0, \infty)$.
\end {theorem}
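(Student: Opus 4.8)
The plan is to produce $g$ by a \emph{localized gluing}: starting from an asymptotically flat metric with positive mass and $R \ge 0$ --- the natural choice being spatial Schwarzschild $g_m$ of mass $m>0$, which is scalar flat --- I would deform it to the flat metric $\delta$ across a solid cone, keeping $R_g = 0$ throughout and keeping the deformation supported \emph{inside} that cone, so that the resulting metric is exactly Euclidean on the complementary half-space.

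First I would fix the geometry of the transition region. Choose nested solid cones $\mathcal{C}_1 \subset \mathcal{C}_2$ with common axis along $-e_3$ and half-opening angle $<\pi/2$, truncated to $\{|x|\ge \lambda\}$ for some large $\lambda$; since $\mathcal{C}_2 \subset \{x_3<0\}$, its complement contains $\R^2 \times (0,\infty)$ (and a neighborhood of the origin). Let $\chi$ be a smooth cutoff equal to $1$ on $\mathcal{C}_1$ and $0$ outside $\mathcal{C}_2$, place the Schwarzschild end far out along the axis so that the transition avoids the horizon, and set $\bar g = \chi\, g_m + (1-\chi)\,\delta$. Then $\bar g = g_m$ on $\mathcal{C}_1$, $\bar g = \delta$ outside $\mathcal{C}_2$, and $R_{\bar g}$ is supported in the conical shell $\mathcal{N} := \mathcal{C}_2 \setminus \mathcal{C}_1$, with $R_{\bar g}$ and its derivatives of size $O(m)$ relative to distance from the vertex; the effective smallness parameter is $m/\lambda$.

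Next I would correct $\bar g$ to an exactly scalar flat metric. Writing $R_{\bar g + h} = R_{\bar g} + DR_{\bar g}\!\cdot\! h + \mathcal{Q}_{\bar g}(h)$, where $DR_{\bar g}$ is the linearized scalar curvature operator on symmetric $2$-tensors and $\mathcal{Q}_{\bar g}$ is the quadratic-and-higher remainder, I want to solve $DR_{\bar g}\!\cdot\! h = -R_{\bar g} - \mathcal{Q}_{\bar g}(h)$ for some $h$ supported in $\mathcal{N}$. Following the Corvino--Schoen scheme I would look for $h = DR_{\bar g}^{*} u$, with $DR_{\bar g}^{*}$ the formal $L^2$-adjoint (acting on scalars) and $u$ a function on $\mathcal{N}$ vanishing to high order on $\partial\mathcal{N}$; since $DR_{\bar g}\circ DR_{\bar g}^{*}$ is fourth-order elliptic, this reduces to a scalar equation for $u$, solved by a weighted variational argument for the linear problem and then a contraction-mapping (Newton) iteration for the nonlinear one, the contraction being supplied by $m/\lambda \ll 1$.

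The hard part --- and the genuinely new input of \cite{Carlotto-Schoen} --- is the \emph{weighted coercivity inequality} underlying the variational step: one needs an estimate of the schematic form
\[
\int_{\mathcal{N}} w^{-1}\,|u|^2 \;\lesssim\; \int_{\mathcal{N}} w^{N}\,|DR_{\bar g}^{*} u|^2 ,
\]
where $w$ is a weight degenerating on $\partial\mathcal{N}$ fast enough to force $h = DR_{\bar g}^{*} u$ to be admissibly supported, and with a constant that stays under control as the shell is pushed out to infinity (so that the errors $R_{\bar g}$ and $\mathcal{Q}_{\bar g}(h)$, which decay with $\lambda$, can be absorbed). The delicate point is that the flat model $DR_{\delta}^{*} u = \operatorname{Hess} u - (\Delta u)\,\delta$ has the affine functions --- the static potentials --- in its kernel, so the inequality must ``see past'' this kernel; this is precisely where the \emph{conical} shape of $\mathcal{N}$ is essential (an affine function vanishing on the curved lateral faces of a cone is identically zero, and a one-parameter family of weights adapted to the two faces and to the vertex yields the uniform estimate), whereas on a flat slab the analogous inequality degenerates --- consistently with Theorem \ref{thm:rigidityslabs}. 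The estimates for $DR_{\bar g}^{*}$ are then obtained by perturbation off those for $DR_{\delta}^{*}$, uniformly in $m/\lambda$. Granting all of this, the iteration converges and $g := \bar g + h$ is a smooth asymptotically flat metric on $\R^3$ with $R_g \equiv 0 \ge 0$; it equals $\delta$ outside $\mathcal{C}_2$, hence on $\R^2\times(0,\infty)$, and it equals $g_m$ on $\mathcal{C}_1$; the correction $h$ is negligible at infinity, so the ADM mass of $g$ is the average over the sphere at infinity of a mass aspect equal to $m$ in the directions of the cone and $0$ outside it --- in particular, it is positive.
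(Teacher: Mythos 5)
This statement is not proved in the present paper; it is quoted from the cited work of Carlotto and Schoen, and your sketch reproduces that paper's mechanism correctly: a Corvino--Schoen type gluing of Schwarzschild to the flat metric across a conical shell, whose crucial new ingredient is a weighted coercivity estimate for $DR_{\bar g}^{*}$ on cones that avoids the affine (static-potential) kernel and holds uniformly under dilations. The only soft spot is the final mass computation, where you should justify that the correction $h$ does not contribute at the $|x|^{-1}$ order before reading off the mass aspect; this can be circumvented entirely by invoking the rigidity part of the positive mass theorem, since $R_g\geq 0$ and $g$ agrees with non-flat Schwarzschild on $\mathcal{C}_1$, which forces $m_{ADM}(g)>0$.
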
 

The coordinate planes $\R^2 \times \{z\}$ with $z > 0$ in Theorem \ref{thm:CS} are stable minimal surfaces. In particular, the area-minimizing condition in Theorem \ref{thm:Schoenrigidity} cannot be relaxed. We also see that the condition that $(M, g)$ be asymptotic to Schwarzschild in Theorem \ref{thm:stableminimalnocrossing} is necessary. 

There is a rich theory of rigidity results for (compact) minimal surfaces in Riemannian $3$-manifolds with a lower scalar-curvature bound. We refer the reader to the papers \cite{Cai-Galloway:2000,Bray-Brendle-Eichmair-Neves:2010,Bray-Brendle-Neves:2010,MarquesNeves:min-max-rigidity-3mflds,MaximoNunes:hawking-rigidity,Nunes:hyperbolic-rigidity,Ambrozio:free-bdry-rigidity,MicallefMoraru} for several recent results in this direction, and to the introductions of these papers for a complete overview.

Theorem \ref{thm:Schoenrigidity} plays a role in the classification of initial data sets that admit a global static potential. Let $(M, g)$ be a connected Riemannian manifold that admits a non-constant function $f \in C^\infty(M)$ with $L^* f = 0$, where
\[
 L^* f = - (\Delta f) g +  \nabla d f  - f \,  \text{Ric} 
\]
is the formal adjoint of the linearisation of the scalar curvature operator at $g$. We recall from e.g. \cite{Corvino:2000} that when $(M, g)$ is asymptotically flat, then its scalar curvature vanishes and the condition that $L^* f = 0$ is equivalent to
\[
\nabla d f = f \, \text{Ric}  \quad \text{ and } \quad \Delta f = 0,
\]
implying that the spacetime 
\[
(M^o \times \R, g - f^2 d t \otimes d t ) \quad \text{ where }  \quad M^o = \{x \in M : f(x) > 0\}
\]
is a static solution of the vacuum Einstein equations. More generally, G. Galloway and P. Miao show in \cite{Galloway-Miao:2014} that when $(M, g)$ is an asymptotically flat Riemannian $3$-manifold --- possibly with several ends --- such that $f$ vanishes on the boundary of $M$, then every unbounded component of the (necessarily regular) level set $\{x \in M : f(x) = 0\}$ is an absolutely area-minimizing plane. As observed in Section 4 of \cite{Galloway-Miao:2014}, Theorem \ref{thm:Schoenrigidity} shows that such unbounded components can only exist when $(M, g)$ is flat $\R^3$  and $f$ is a linear function. Together with Corollary 1.1 of \cite{Miao-Tam:2015} and the refinement of the results of  G. Bunting and A. K. M. Masood-ul-Alam \cite{Bunting-Masood-ul-Alam:1987} in Proposition 4.1 of \cite{Miao-Tam:2015}, both due to P. Miao and L.-T. Tam, one obtains the following classification result:

\begin{corollary} Let $(M, g)$ be an asymptotically flat Riemannian $3$-manifold, possibly with several ends, that admits a non-constant function $f \in C^\infty(M)$ with $L^* f = 0$ that vanishes on the boundary of $M$. Then $(M, g)$ is isometric to either flat $\R^3$, or, for some $m > 0$, either Schwarzschild 
\[
\left( \{x \in \R^3 : |x| \geq m/2 \}, \left(1 + \frac{m}{2 |x|}\right)^4 \sum_{i=1}^3 dx^i \otimes dx^i \right)
\]
or the doubled spatial Schwarzschild geometry
\[
\left( \R^3 \setminus \{0\}, \left(1 + \frac{m}{2 |x|}\right)^4 \sum_{i=1}^3 dx^i \otimes dx^i \right).
\]  
\end {corollary}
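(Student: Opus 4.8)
The plan is to assemble the facts recalled just before the statement with Theorem~\ref{thm:Schoenrigidity} and the static uniqueness results of P.~Miao and L.-T.~Tam. Since $(M,g)$ is asymptotically flat and $f$ is a non-constant solution of $L^*f=0$, the scalar curvature of $(M,g)$ vanishes identically and $f$ satisfies $\nabla df = f\,\Ric$ and $\Delta f = 0$ (see \cite{Corvino:2000}). Harmonicity together with asymptotic flatness forces $f$ to converge to a constant on each end of $M$, and since $f$ is non-constant at least one such limit is non-zero. The zero set $Z=\{x\in M: f(x)=0\}$ is a regular hypersurface, as noted in \cite{Galloway-Miao:2014}, and it is totally geodesic because $\nabla df = f\,\Ric$ vanishes along $Z$; on each component of $M\setminus Z$ the function $f$ has a fixed sign. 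In particular $Z\supseteq\partial M$, so whenever $\partial M\neq\emptyset$ it is a closed totally geodesic surface, i.e. a horizon. The proof then splits according to whether $Z$ has an unbounded component.

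First I would treat the case in which $Z$ has an unbounded component $\Sigma_0$. By the theorem of G.~Galloway and P.~Miao recalled above, $\Sigma_0$ is an absolutely area-minimizing plane, hence a non-compact area-minimizing boundary in the sense of Appendix~\ref{sec:am}. Theorem~\ref{thm:Schoenrigidity} then forces $(M,g)$ to be isometric to flat $\R^3$; in particular $\partial M=\emptyset$. Since now $\Ric\equiv 0$ we get $\nabla df = 0$, so $f$ is affine and, after an isometry, a linear coordinate function. This gives the first alternative of the statement.

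In the remaining case every component of $Z$ is compact. If $\partial M\neq\emptyset$, then $\partial M$ is a non-empty compact component of $Z$, and applying Proposition~4.1 of \cite{Miao-Tam:2015} — the refinement of the black-hole uniqueness theorem of G.~Bunting and A.~K.~M.~Masood-ul-Alam \cite{Bunting-Masood-ul-Alam:1987} — shows that $Z=\partial M$ is connected and that $(M,g)$ is Schwarzschild of some mass $m>0$ with $f$ its standard static potential. If instead $\partial M=\emptyset$ but $Z\neq\emptyset$, then $Z$ is a closed totally geodesic surface across which $f$ changes sign; applying the same black-hole uniqueness statement to each of $\{f\geq 0\}$ and $\{f\leq 0\}$ (with static potentials $\pm f$ vanishing on the horizon $Z$) and gluing identifies $(M,g)$ with the doubled spatial Schwarzschild manifold of some mass $m>0$, with $f$ equal up to sign to the corresponding static potential. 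Finally, if $\partial M=\emptyset$ and $Z=\emptyset$, then $f$ has a fixed sign globally and Corollary~1.1 of \cite{Miao-Tam:2015} yields that $(M,g)$ is flat $\R^3$; but then $f$ would be an affine function of constant sign on $\R^3$, hence constant, contradicting our hypothesis, so this configuration does not occur.

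The substantive input from the present paper is confined to the unbounded case, where Theorem~\ref{thm:Schoenrigidity} does the work; once $Z$ is known to have only compact components the conclusion is a matter of assembling the Miao--Tam rigidity theorems. Accordingly, I expect the main obstacle to be the careful verification that the hypotheses of those theorems are satisfied in every sub-case: the normalization and asymptotics of $f$ on each end, the regularity and a priori unknown connectedness of $Z$, the behaviour of $f$ along $\partial M$, and, in the several-ends situation, the exclusion of configurations other than a single Schwarzschild end with horizon boundary or the doubled Schwarzschild neck.
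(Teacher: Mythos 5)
Your proposal follows the same route as the paper's own sketch in the paragraph preceding the statement: Galloway--Miao plus Theorem~\ref{thm:Schoenrigidity} to dispose of the case where $\{f=0\}$ has an unbounded component, and the Miao--Tam rigidity results (Corollary~1.1 and Proposition~4.1 of \cite{Miao-Tam:2015}) for the remaining case where $\{f=0\}$ is compact. One small slip worth noting is the opening claim that $f$ converges to a constant on each end --- this fails precisely in the flat $\R^3$/linear-$f$ alternative --- but since that configuration is exactly the one captured by the unbounded branch, the overall case analysis is unaffected.
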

We are grateful to L. Ambrozio and P. Miao for valuable discussions concerning this point.

We now turn our attention to the role played by closed volume-preserving CMC surfaces in asymptotically flat manifolds. 

In their groundbreaking paper \cite{Huisken-Yau:1996}, G. Huisken and S.-T. Yau have shown that the complement of a certain (large) compact subset $C$ of a Riemannian $3$-manifold $(M, g)$ that is asymptotic to Schwarzschild with mass $m > 0$ admits a foliation by closed volume-preserving CMC spheres $\{\Sigma_H\}_{H \in (0, H_0]}$ where $\Sigma_H$ has (outward) mean curvature $H$. Importantly, they observed that each leaf $\Sigma_H$ is characterized uniquely by its mean curvature among a large class of surfaces, justifying reference to $\{\Sigma_H\}_{H \in (0, H_0]}$ as the canonical foliation of the end of $(M, g)$. In \cite{Qing-Tian:2007}, J. Qing and G. Tian have given a delicate improvement of this characterization showing that $\Sigma_H$ is in fact the only closed volume-preserving stable CMC sphere of mean curvature $H$ in $(M, g)$ that encloses $C$. These results of \cite{Huisken-Yau:1996, Qing-Tian:2007} are perturbative in nature in that only surfaces far out in the chart at infinity are considered. They lie very deep even in the special case of the exact  Schwarzschild (spatial) geometry 
\begin{align} \label{eqn:Schwarzschild}
\left( \mathbb{R}^3 \setminus \overline {B_{\frac{m}{2}} (0) }, \left( 1 + \frac{m}{2 |x|}\right)^4 \sum_{i=1}^3 dx^i \otimes dx^i\right).
\end{align}
We mention the spectacular recent characterization \cite{Brendle:2013} by S. Brendle of closed embedded constant mean curvature surfaces in Schwarzschild as the centered coordinate spheres in this context. 

In the next two main results, we investigate the question of global uniqueness results for large volume-preserving stable CMC surfaces in asymptotically flat manifolds.

\begin{theorem} \label{thm:largeCMC} Let $(M, g)$ be an asymptotically flat Riemannian $3$-manifold with non-negative scalar curvature and horizon boundary. Assume that $(M, g)$ contains no properly embedded totally geodesic flat planes along which the ambient scalar curvature vanishes. Let $C \subset M$ be compact. There is $\alpha = \alpha(C) > 0$ so that every connected closed volume-preserving stable CMC surface $\Sigma \subset M$ with 
\[
\area (\Sigma) \geq \alpha
\] 
is disjoint from $C$.
\end{theorem}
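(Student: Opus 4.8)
The plan is to argue by contradiction via a compactness argument. Suppose there is a compact set $C$ and a sequence of connected closed volume-preserving stable CMC surfaces $\Sigma_i \subset M$ with $\area(\Sigma_i) \to \infty$ but each $\Sigma_i \cap C \neq \emptyset$. The first step is to establish uniform local area bounds and curvature estimates for volume-preserving stable CMC surfaces with a point in a fixed compact set. Local area bounds follow from the monotonicity-type considerations together with the a priori knowledge (from earlier results such as those of \cite{stableCMC} invoked in the introduction) that such surfaces have controlled mean curvature; indeed, since the $\Sigma_i$ all touch $C$ and have large area, one expects their mean curvature $H_i$ to be small (bounded above, tending to $0$), so locally they look like almost-minimal surfaces. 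With a local area bound in hand, the Schoen–Simon–Yau curvature estimates for stable CMC hypersurfaces give uniform $C^{2,\alpha}$ control on compact subsets away from a bounded singular set, which in dimension $3$ is in fact empty for the stable case. Hence, after passing to a subsequence, the $\Sigma_i$ converge (with multiplicity) in $C^2_{loc}$ on all of $M$ to a complete limit.

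The second step is to identify the limit. Because $\area(\Sigma_i) \to \infty$ and the surfaces are connected with uniformly bounded curvature and uniform local area bounds, the limit $\Sigma_\infty$ is a complete, noncompact, stable minimal immersion (the CMC equation passes to the limit with $H_\infty = \lim H_i = 0$; stability passes to the limit since the volume constraint is only a codimension-one restriction on test functions, and in the limit one can show full stability holds, or at least stability in the sense needed — this is the kind of subtlety handled in \cite{stableCMC, isostructure}). Moreover, since each $\Sigma_i$ meets $C$, the limit $\Sigma_\infty$ meets $\overline C$, so it is nontrivial; and a connected limit of embedded surfaces does not cross itself. Thus $\Sigma_\infty$ is an unbounded complete stable minimal immersion into $(M,g)$ that does not cross itself.

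The third step is to apply the structural results of the paper. By Theorem \ref{thm:reduction}, $(M,g)$ admits a complete noncompact properly embedded stable minimal surface. If $(M,g)$ were asymptotic to Schwarzschild with positive mass this would already contradict Theorem \ref{thm:stableminimalnocrossing} (the only such surfaces being horizon components, which are compact); but here we only assume asymptotic flatness with nonnegative scalar curvature and horizon boundary. In this generality we invoke Theorem \ref{thm:Carlotto} (or Theorem \ref{thm:stableminimalproper}): the properly embedded stable minimal surface produced must be a totally geodesic flat plane along which the ambient scalar curvature vanishes. This directly contradicts the hypothesis that $(M,g)$ contains no such plane. Hence no such sequence $\Sigma_i$ exists, and for each compact $C$ there is $\alpha(C)$ as claimed. (One should double-check that $\alpha$ can be taken to depend only on $C$ and not on the sequence: this is automatic from the contradiction setup, since the negation of the statement is precisely the existence of such a sequence for some fixed $C$.)

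**Main obstacle.** The delicate point is the passage to the limit in step two: one must rule out degeneration of the convergence — i.e. ensure the local area bounds genuinely hold for \emph{all} large-area volume-preserving stable CMC surfaces touching $C$, not merely almost-minimizing ones — and one must verify that volume-preserving stability survives in the limit in a form strong enough to feed into Theorems \ref{thm:reduction} and \ref{thm:Carlotto}. Controlling the mean curvature $H_i$ (showing $H_i$ stays bounded, and in fact $H_i \to 0$, for connected large-area surfaces meeting a fixed compact set) is where the asymptotically flat geometry and the prior CMC literature from \cite{stableCMC, isostructure, hdiso, offcenter} must be used carefully; without it, the limit could fail to be minimal or could fail to be complete.
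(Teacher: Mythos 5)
Your high-level strategy — contradiction via a compactness argument, limiting to a stable minimal immersion, then invoking Theorem~\ref{thm:reduction} and Theorem~\ref{thm:Carlotto} — is the right one, and you correctly identify the pieces that must be assembled. However, there is a genuine gap at the very step you flag as the ``main obstacle,'' and your Step~1 contains a false claim that papers over it. You write that ``local area bounds follow from monotonicity-type considerations together with... controlled mean curvature.'' This is backwards: monotonicity gives \emph{lower} bounds on area, not upper bounds, and there is no a priori upper bound on $\area(B_r \cap \Sigma_k)$ for volume-preserving stable CMC surfaces. The paper therefore splits into a genuine dichotomy: either $\sup_k \area(B_r \cap \Sigma_k) < \infty$ for every $r$, in which case the methods of \cite{stableCMC} produce a properly embedded stable minimal surface of quadratic area growth and Theorem~\ref{thm:Carlotto} applies directly; or $\sup_k \area(B_r \cap \Sigma_k) = \infty$ for some $r$, and one must invoke the sheeting result (Proposition~\ref{prop:sheeting}). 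Your proposal never engages with the second case, which is precisely where the new work lies.

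Relatedly, your assertion that ``stability passes to the limit since the volume constraint is only a codimension-one restriction on test functions'' is not a proof, and it is not how the paper gets stability. In the area-concentration case, the convergence is with unbounded multiplicity, and stability of the limiting immersion is extracted by comparing two consecutive sheets: the difference of their graphical defining functions solves a linear elliptic equation, and after normalizing and applying Harnack one obtains a positive Jacobi function on the limit, hence stability (the Simon/Meeks--Rosenberg argument inside Proposition~\ref{prop:sheeting}). Without the sheeting structure you have no handle on this. Finally, you assert but do not justify that the limit is unbounded; the paper uses Proposition~\ref{prop:unboundedness} together with the fact that horizon components are spheres (while the limit is conformal to the plane by Corollary~\ref{cor:nocylinders}) to rule out a bounded limit. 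So the route you sketch is recoverable, but as written it omits the case dichotomy and the two concrete mechanisms (Harnack/sheeting for stability, Proposition~\ref{prop:unboundedness} for unboundedness) that make the compactness step actually go through.
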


In conjunction with the uniqueness results from \cite{Huisken-Yau:1996, Qing-Tian:2007}, we obtain the following consequence:

\begin {corollary}  \label{cor:largeCMC} Let $(M, g)$ be a Riemannian $3$-manifold with non-negative scalar curvature that is asymptotic to Schwarzschild with mass $m > 0$ and which has horizon boundary. Let $p \in M$. Every connected closed volume-preserving stable CMC surface $\Sigma \subset M$ that contains $p$ and which has sufficiently large area is part of the canonical foliation. 
\end{corollary}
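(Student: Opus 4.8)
The plan is to derive the corollary by feeding Theorem~\ref{thm:largeCMC} into the uniqueness theorems of Huisken--Yau \cite{Huisken-Yau:1996} and Qing--Tian \cite{Qing-Tian:2007}; the substantive analysis is all contained in Theorems~\ref{thm:Carlotto} and \ref{thm:largeCMC}, and the remaining task is to match hypotheses. First one checks that Theorem~\ref{thm:largeCMC} is applicable, i.e.\ that $(M,g)$ contains no properly embedded totally geodesic flat plane $P$ along which the ambient scalar curvature vanishes. Such a $P$ is non-compact, properly embedded, and complete; and, since $P$ is flat and totally geodesic and the ambient scalar curvature vanishes along it, the Gauss equation~\eqref{eqn:Gaussequation} forces $\Ric(\nu,\nu)$ to vanish along $P$, so the second variation of area of $P$ reduces to the Dirichlet energy and $P$ is stable. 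As $(M,g)$ is asymptotic to Schwarzschild with mass $m>0$ and has horizon boundary, Theorem~\ref{thm:stableminimalnocrossing} (equivalently, the last sentence of Theorem~\ref{thm:Carlotto}) rules out such a $P$, so Theorem~\ref{thm:largeCMC} applies.

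Next, fix the Huisken--Yau compact set: a connected compact set $C\subset M$ containing the horizon such that $M\setminus C$ carries the canonical foliation $\{\Sigma_H\}_{H\in(0,H_0]}$ and such that the Qing--Tian characterization holds, namely that for each $H\in(0,H_0]$ the leaf $\Sigma_H$ is the only closed volume-preserving stable CMC sphere of mean curvature $H$ in $(M,g)$ that encloses $C$. Given $p\in M$, let $C_p$ be a connected compact set containing $p$ and the compact region enclosed by the innermost leaf $\Sigma_{H_0}$ (in particular $C\subset C_p$), and apply Theorem~\ref{thm:largeCMC} to $C_p$: there is $\alpha=\alpha(p)>0$ so that every connected closed volume-preserving stable CMC surface $\Sigma\subset M$ with $\area(\Sigma)\ge\alpha$ is disjoint from $C_p$.

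Now let $\Sigma$ be such a surface, bounding a region $\Omega$ with $p\in\Omega$. Since $C_p$ is connected, disjoint from $\Sigma$, and meets $\Omega$ at $p$, it is contained in $\Omega$; hence $\Omega$ encloses $\Sigma_{H_0}$, and a fortiori $C$. Consequently $\Sigma$ lies in the region foliated by the leaves $\Sigma_H$ with $H\in(0,H_0)$, it encloses $C$, and a maximum-principle comparison at the outermost and the innermost of these leaves met by $\Sigma$ confines its constant mean curvature to $(0,H_0)$. Using the a~priori estimates for large volume-preserving stable CMC surfaces in asymptotically flat manifolds developed in \cite{stableCMC,isostructure} and exploited in the proof of Theorem~\ref{thm:largeCMC}, one sees that $\Sigma$ is a topological sphere. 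The Qing--Tian characterization then forces $\Sigma=\Sigma_H$, i.e.\ $\Sigma$ belongs to the canonical foliation.

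The hard analysis being exactly Theorems~\ref{thm:Carlotto} and \ref{thm:largeCMC}, I do not expect a serious obstacle beyond this bookkeeping. The points that need care are (i) confirming the no-flat-plane hypothesis of Theorem~\ref{thm:largeCMC}, which is immediate from Theorem~\ref{thm:stableminimalnocrossing}; (ii) the elementary connectedness argument ensuring that $\Sigma$ encloses the Huisken--Yau set $C$, so that Qing--Tian applies; and (iii) verifying that $\Sigma$ is a topological sphere with mean curvature in the range covered by the canonical foliation, which is the only step that does not amount to quoting a result verbatim.
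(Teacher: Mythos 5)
Your approach matches the paper's intent: the paper offers no written proof of this corollary, only the remark that it follows from Theorem~\ref{thm:largeCMC} in conjunction with the uniqueness theorems of Huisken--Yau and Qing--Tian, and your argument is exactly that synthesis. In particular, your observation that a properly embedded totally geodesic flat plane along which the ambient scalar curvature vanishes is automatically a stable minimal surface (by the Gauss equation its Jacobi operator degenerates to the Laplacian, so \eqref{eqn:stabilityminimal} holds trivially), so that Theorem~\ref{thm:stableminimalnocrossing} supplies the no-flat-plane hypothesis of Theorem~\ref{thm:largeCMC}, is precisely the mechanism the paper flags when it later says that ``the role of Theorem~\ref{thm:Schoenrigidity} here is that of Theorem~\ref{thm:stableminimalnocrossing} in the proof of Corollary~\ref{cor:largeCMC}''.

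Two points deserve care. First, you silently read ``$\Sigma$ contains $p$'' as $p\in\Omega$, where $\Omega$ is the bounded region enclosed by $\Sigma$. On the literal reading $p\in\Sigma$, Theorem~\ref{thm:largeCMC} applied to any compact $C$ containing $p$ already shows that no such $\Sigma$ of area at least $\alpha(C)$ exists, and the corollary becomes vacuous without any appeal to Huisken--Yau or Qing--Tian; since the paper explicitly invokes those results, your reading is evidently the intended one, but you should state the interpretation. Second, before Qing--Tian can be applied you must know that $\Sigma$ is a sphere and that its mean curvature lies in $(0,H_0]$. These are genuine steps, not bookkeeping: the former rests on the curvature-decay estimate of Lemma~\ref{lem:curvatureestimates-with-decay}, the Christodoulou--Yau bound~\eqref{eqn:ChristodoulouYau}, and a Gauss--Bonnet argument in the regime where $\Sigma$ is far out (as developed in \cite{stableCMC,isostructure}), while the latter follows from the maximum-principle comparison with the leaves as you describe. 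Your phrase ``one sees that $\Sigma$ is a topological sphere'' should point to the specific statement in \cite{stableCMC} or \cite{isostructure} being invoked rather than gesture at the literature.
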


Theorem \ref{thm:largeCMC} was proven by the third-named author and J. Metzger in \cite{stableCMC} under the (much) stronger assumption that $(M, g)$ has \emph{positive} scalar curvature. As we have already mentioned, our improvement here is closely tied to the generality of Theorem \ref{thm:reduction}. 

In \cite{offcenter}, S. Brendle and the third-named author have constructed examples of Riemannian $3$-manifolds asymptotic to Schwarzschild with positive mass that contain a sequence of larger and larger volume-preserving stable CMC surfaces that diverge to infinity \emph{together} with the regions they bound. Thus, in the uniqueness results of \cite{Huisken-Yau:1996, Qing-Tian:2007}, a proviso that the surfaces enclose \emph{some} given set is certainly necessary. When the assumption of Schwarzschild asymptotics is dropped, the examples in Theorem \ref{thm:CS} show even more dramatically that some such a condition is necessary to obtain uniqueness results. Theorem \ref{thm:largeCMC} extends the results of \cite{Huisken-Yau:1996, Qing-Tian:2007} optimally in this sense. 

We remark that much progress has been made recently in developing analogues of the results of \cite{Huisken-Yau:1996, Qing-Tian:2007} in general asymptotically flat Riemannian $3$-manifolds, see e.g. \cite{Huang:2010, Ma:2011, Nerz:2014}.

D. Christodoulou and S.-T. Yau \cite{Christodoulou-Yau:1988} have noted that the Hawking mass of volume-preserving stable CMC spheres in asymptotically flat Riemannian $3$-manifolds with non-negative scalar curvature is non-negative. This observation makes these surfaces particularly appealing from a physical standpoint. Geometrically, they arise in the variational analysis of the fundamental question of isoperimetry. The results described above beg the question whether each leaf of the canonical foliation $\{\Sigma_H\}_{H \in (0, H_0]}$ has least area for the volume it encloses, and whether it is uniquely characterized by this property. This global uniqueness result was established by J. Metzger and the third-named author in \cite{isostructure}. (In exact Schwarzschild, this was proven by H. Bray in his dissertation \cite{Bray:1997}.) Unlike the results based on stability that we have described above, the existence and global uniqueness of isoperimetric regions of large volume has been verified in higher dimensions as well \cite{hdiso}. 

The definition of the ADM-mass through flux integrals as in \eqref{def:mass} and that of related \emph{physical} invariants that  canonically associated with an asymptotically flat Riemannian $3$-manifold $(M, g)$ is suggested by the Hamiltonian formalism of general relativity. The fact that the positive mass theorem was a longstanding open problem is witness to the elusive nature of these concepts. Over the past two decades, in a quest for quasi-local versions of these notions, considerable effort has been spent on recasting these concepts in terms of geometric properties of $(M, g)$. A spectacular advance in this direction is the development of an isoperimetric notion of mass by G. Huisken. Recall the classical fact that a small geodesic ball in a Riemannian manifold that is centered at a point of positive scalar curvature bounds more volume than a Euclidean ball of the same surface area. An explicit computation gives that large centered coordinate balls in Schwarzschild (which is scalar-flat) have the same property, and that the ``isoperimetric deficit" encodes the mass. G. Huisken has introduced the concept of \emph{isoperimetric mass}

\[
m_{ISO} = \lim_{r \to \infty} \frac{2}{\area (S_r) } \left( \vol (B_r) - \frac{\area (S_r)^{3/2}}{ 6 \sqrt \pi}  \right)
\] 
which does not involve derivatives of the metric at all. In \cite{Fan-Shi-Tam:2009}, X.-Q. Fan, P. Miao, Y. Shi, and L.-F. Tam have shown that 
\[
m_{ISO} = m_{ADM}
\]
if the scalar curvature of $(M, g)$ is integrable. Their derivation is based on a striking integration by parts. Thus, if $m_{ADM} > 0$, then large coordinate balls $B_r$ in $(M, g)$ contain more volume than balls of the same surface area in Euclidean space. Together with the positive mass theorem, this leads to a remarkable large scale manifestation of non-negative scalar curvature. We note that this implies that, in the examples constructed by R. Schoen and the first-named author that we described above, sufficiently large spheres in the Euclidean half-space, though evidently volume-preserving stable CMC surfaces, are  \emph{not} isoperimetric. We include the following consequence of this discussion, which sharpens \cite[Theorem 1.2]{hdiso} of J. Metzger and the third-named author: 

\begin {theorem} \label{thm:largeiso} Let $(M, g)$ be an asymptotically flat Riemannian $n$-manifold with horizon boundary, integrable scalar curvature, and positive ADM-mass. For all $V>0$ sufficiently large there is an isoperimetric region of volume $V$, i.e., there is a bounded region $\Omega_V \subset M$ of volume $V$ that contains the horizon such that 
\begin{align} 
\area (\partial \Omega_V) = \inf \{ \area (\partial \Omega) : \Omega \subset M \text{ smooth open region containing the horizon, volume } V\}.
\end{align}
The region $\Omega$ is smooth away from a thin singular set of Hausdorff dimension $\leq n-7$.
\end{theorem}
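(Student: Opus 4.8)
The plan is to combine the existence theory for large isoperimetric regions from \cite{hdiso} with Huisken's isoperimetric mass identity and the positive mass theorem, using the latter two as a substitute for the pointwise positive scalar curvature hypothesis present in earlier work. First, I would recall the general existence mechanism: by the results of \cite{hdiso} (building on geometric measure theory and a concentration-compactness analysis), for each $V > 0$ one considers a minimizing sequence $\Omega_i$ of smooth regions containing the horizon with $\vol(\Omega_i) = V$. Passing to a subsequential limit in the flat topology (with bounded perimeter, by comparison with large coordinate balls), the limit $\Omega_V$ is a set of finite perimeter minimizing area among competitors of volume $V$ containing the horizon, and standard regularity theory for perimeter minimizers (with an obstacle) gives that $\partial \Omega_V$ is smooth away from a singular set of Hausdorff dimension $\le n-7$. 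The only issue is whether the minimizing sequence \emph{loses compactness}, i.e.\ whether a portion of the volume escapes to infinity along the end.

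The heart of the argument is therefore to rule out this volume leakage for large $V$, and this is exactly where positivity of the ADM-mass enters. If a fixed positive fraction of the volume were to slide off to infinity, then in the limit one would be comparing the infimal area for volume $V$ in $(M,g)$ against the infimal area obtained by splitting the volume between a bounded piece and ``a coordinate ball pushed to infinity'', whose area contribution is governed by the Euclidean isoperimetric profile $c_n V^{(n-1)/n}$. To obtain a contradiction one uses that, by Huisken's isoperimetric mass (in the form of \cite{Fan-Shi-Tam:2009} for $n=3$, and its analogue in higher dimensions) together with $m_{ADM} = m_{ISO} > 0$, large coordinate balls in $(M,g)$ enclose \emph{strictly more} volume than a Euclidean ball of the same area; equivalently, the isoperimetric profile of $(M,g)$ lies strictly above the Euclidean profile for large volumes. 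Hence keeping all of the volume in one large near-coordinate ball in $(M,g)$ is strictly more efficient than letting any definite fraction of it escape, which forces the minimizing sequence to be compact (no volume is lost) and produces the isoperimetric region $\Omega_V$ of volume $V$. That $\Omega_V$ contains the horizon for $V$ large follows because any component of $\partial \Omega_V$ that were a (stable minimal) horizon component could be deleted or filled in without increasing area while only increasing enclosed volume, together with the standard fact that for large $V$ the optimal region is connected and ``centered''.

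The main obstacle I expect is the quantitative comparison that prevents volume from escaping: one needs an effective lower bound on the isoperimetric profile of $(M,g)$ at large volumes that beats the Euclidean profile by a definite amount, uniformly, and one must carefully track the interaction of the ``bounded part'' and the ``escaping part'' of a minimizing sequence — in particular controlling cross-terms in the perimeter when the region nearly disconnects, and handling the horizon as an obstacle throughout. This is precisely the role played by the ADM-mass positivity via the Fan--Shi--Tam identity: it upgrades the \emph{asymptotic} statement $m_{ISO} = m_{ADM} > 0$ into the strict profile inequality needed to close the concentration-compactness dichotomy. The higher-dimensional statement requires the $n$-dimensional version of the isoperimetric-mass/ADM-mass comparison and of the positive mass theorem, which I would invoke as known inputs; modulo these, the argument is the same as for \cite[Theorem 1.2]{hdiso} with ``positive scalar curvature'' replaced by ``positive ADM-mass''. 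Finally, the regularity and the dimension bound on the singular set are quoted verbatim from the standard theory of area-minimizing boundaries.
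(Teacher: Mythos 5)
Your overall strategy matches the paper's: both reduce, via the concentration--compactness analysis of \cite{hdiso}, to ruling out the escape of a definite amount of volume along the end, and both locate the role of positive mass at exactly that juncture. The difference is in how the final contradiction is closed. The paper does \emph{not} invoke the Fan--Shi--Tam identity in the proof; instead it uses the precise structure theorem of \cite{hdiso}: a non-compact minimizing sequence consists of a divergent sequence of coordinate balls of radii $r_j(V_i)$ together with a residual isoperimetric region $\tilde\Omega(V_i)$ of diverging volume, with the mean curvatures of the escaping balls matching $H(V_i)$ of the residual. A blow-down argument shows $\tilde\Omega(V_i)$ is itself close to a coordinate ball (of radius $\sim 1/H(V_i)$), so the infimum for volume $V_i$ is nearly achieved by a union of two disjoint near-Euclidean coordinate balls of \emph{comparable} radii --- and this is incompatible with the (strict, first-order) Euclidean isoperimetric inequality, since a single coordinate ball of the total volume does strictly better.

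Two points to be careful about in your version. First, you lean on the identity $m_{ISO}=m_{ADM}$; the paper cites \cite{Fan-Shi-Tam:2009} only for $n=3$, and the theorem is stated for arbitrary $n$. You would need to supply (or reference) the $n$-dimensional analogue, whereas the paper's route through the mean-curvature matching and blow-down sidesteps this. Second, as you yourself flag, $m_{ISO}>0$ only improves on the Euclidean profile at a lower order ($O(\sqrt{A_{\mathrm{Eucl}}(V)})$ in the area), whereas when a \emph{definite} fraction of volume escapes the advantage of re-merging comes from the first-order Euclidean isoperimetric deficit, not from the mass term. Your argument therefore really needs to split into two regimes: a vanishing fraction escaping (handled by $m_{ISO}>0$, i.e., the coordinate-ball competitor) versus a definite fraction escaping (handled by strict Euclidean subadditivity). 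The structure theorem plus blow-down in the paper accomplishes exactly this separation by showing the two pieces have comparable radii, which is why the clean contradiction with the Euclidean inequality suffices. So: same high-level mechanism, but the paper's proof is more concrete and fills in the quantitative comparison you identified as the main obstacle.
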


Assume now that $n = 3$ and that the scalar curvature of $(M, g)$ is non-negative. Remarkably, isoperimetric regions $\Omega_V$ exist in $(M, g)$ for \emph{all} volumes $V > 0$ in this case. This follows from a beautiful observation due to Y. Shi \cite{Shi:isoIMCF}, as we explain in Appendix \ref{sec:appendixisoallvolume}. It is natural to wonder about the behavior of $\Omega_V$ for large volumes $V>0$. For simplicity of exposition, we assume for a moment that $M$ has empty boundary. Let $\Sigma_i = \partial \Omega_{V_i}$ where $V_i \to \infty$. It has been shown in \cite{isostructure} that these surfaces either diverge to infinity as $i \to \infty$, or that alternatively a subsequence of these surfaces converges geometrically to a non-compact area-minimizing boundary $\Sigma \subset M$. In view of Theorem \ref{thm:Schoenrigidity}, the latter is impossible unless $(M, g)$ is flat $\R^3$. We arrive at the dichotomy that large isoperimetric regions in $(M, g)$ are either drawn far into the asymptotically flat end, or they contain the center of the manifold. 

\begin {corollary} Let $(M, g)$ be an asymptotically flat Riemannian $3$-manifold with non-negative scalar curvature and positive mass. Let $U \subset M$ be a bounded open subset that contains the boundary of $M$. There is $V_0 > 0$ so that for every isoperimetric region $\Omega \subset M$ of volume $V \geq V_0$, either $U \subset \Omega_V$ or $U \cap \Omega_V$ is a thin  smooth region that is bounded by the components of $\partial M$ and nearby stable constant mean curvature surfaces.
\end {corollary}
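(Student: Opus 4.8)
The plan is to argue by contradiction, feeding the compactness and structure theory for large isoperimetric regions from \cite{isostructure} (in the form with horizon boundary treated in \cite{hdiso}) into Theorem \ref{thm:Schoenrigidity}. Fix $U$ and suppose no $V_0$ as claimed exists. Then there are volumes $V_i \to \infty$ and isoperimetric regions $\Omega_i \subset M$ of volume $V_i$ --- each containing $\partial M \subset U$ --- such that $U \not\subset \Omega_i$ while $\Omega_i \cap U$ is not a thin smooth region bounded by components of $\partial M$ and nearby volume-preserving stable CMC surfaces. Since $n = 3$, each $\Omega_i$ has smooth boundary, and $\Sigma_i := \partial \Omega_i$ is a closed volume-preserving stable CMC surface whose mean curvature $H_i$ tends to $0$ as $V_i \to \infty$; a comparison argument bounds $\area(\Sigma_i \cap B)$ on each fixed coordinate ball $B$, and curvature estimates for stable CMC surfaces of small mean curvature bound the second fundamental form of $\Sigma_i$ on compact subsets of $M$.

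First I would pass to a subsequence along which $\Sigma_i$ converges, locally smoothly and with finite multiplicity, to a complete properly embedded minimal surface $\Sigma_\infty$, with $\Omega_i$ converging locally to a region with boundary $\Sigma_\infty$; as in \cite{isostructure}, every non-compact component of $\Sigma_\infty$ is an area-minimizing boundary in the sense of Appendix \ref{sec:am}. If $\Sigma_\infty$ has a non-compact component, Theorem \ref{thm:Schoenrigidity} forces $(M, g)$ to be isometric to flat $\R^3$, contradicting positivity of the mass. Hence $\Sigma_\infty$ is compact; being a closed minimal surface in a manifold with horizon boundary, it is a union of components of $\partial M$ (possibly empty). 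Consequently, near $\overline{U}$ the surfaces $\Sigma_i$ can only accumulate on $\Sigma_\infty \subset \partial M$: any point of $\overline{U} \setminus \partial M$ around which the $\Sigma_i$ did not eventually escape would lie on a further limit minimal surface, which --- being either a closed minimal surface away from $\partial M$ (excluded by the horizon boundary hypothesis) or part of a non-compact area-minimizing boundary (excluded by Theorem \ref{thm:Schoenrigidity} and positive mass) --- cannot exist. So for all large $i$ the surface $\Sigma_i$ meets $\overline{U}$ only in an arbitrarily small neighborhood of $\partial M$, and since $\partial M \subset \Omega_i$ this forces $\Omega_i \cap U$ to be, for large $i$, either all of $U$ or a thin smooth collar bounded by the relevant components of $\partial M$ and the nearby surfaces $\Sigma_i$ --- in either case contradicting the choice of $\Omega_i$. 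This contradiction produces the required $V_0 = V_0(U)$.

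The hard part will be making the second step precise when $\partial M \neq \emptyset$: setting up the compactness statement for isoperimetric boundaries in the presence of horizon boundary so that the compact part of the limit is genuinely identified with a union of horizon components rather than degenerating, and converting ``$\Sigma_i \to \partial M$ locally smoothly near $U$'' into the structural statement that $\Omega_i \cap U$ is a thin collar bounded by $\partial M$ and nearby CMC surfaces (here one must also rule out, by the same dichotomy, that a stray component of $U \setminus \partial M$ is engulfed by $\Omega_i$). A secondary point requiring care is the part of $\Sigma_i$ that runs off into the asymptotically flat end: after a diagonal subsequence it could converge on exhausting regions to a non-compact minimal surface, but such a surface would again be a non-compact area-minimizing boundary, hence excluded by Theorem \ref{thm:Schoenrigidity} together with positivity of the mass. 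Once these points are in place, the trichotomy above closes the argument.
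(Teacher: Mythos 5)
Your proposal follows essentially the same route as the paper: the paper's proof is just a remark noting that the argument of Corollary~6.2 in \cite{isostructure} goes through once the positive scalar curvature hypothesis there is replaced by Theorem~\ref{thm:Schoenrigidity} plus positivity of the mass to rule out a subsequential geometric limit of the isoperimetric boundaries that is a non-compact area-minimizing boundary. You correctly identify that dichotomy (boundaries diverge to infinity or subconverge to such a limit), correctly invoke Theorem~\ref{thm:Schoenrigidity} and positive mass to exclude the second alternative, and correctly observe that any remaining compact limit must be a union of horizon components by the horizon-boundary hypothesis, so that the $\Sigma_i$ either leave $\overline U$ entirely (giving $U \subset \Omega_i$) or hug $\partial M$ there (giving the thin collar). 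The technical caveats you flag --- the precise formulation of the \cite{isostructure} compactness in the presence of horizon boundary, and the diagonal argument for pieces escaping to infinity --- are exactly the items the paper delegates to \cite{isostructure}, so there is no gap beyond what the paper itself leaves to the reference.
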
 
Note that the conclusion of the corollary is wrong for flat $\R^3$. When the scalar curvature of $(M, g)$ is everywhere positive, this result was observed as Corollary 6.2 of \cite{isostructure}. The role of Theorem \ref{thm:Schoenrigidity} here is that of Theorem \ref{thm:stableminimalnocrossing} in the proof of Corollary \ref{cor:largeCMC}.

\ \\

\noindent {\bf Acknowledgments.} The first-named author wishes to express his gratitude to Richard Schoen for introducing him, with great professionality and unparalleled enthusiasm, to the mathematical challenges of general relativity. He also thankfully acknowledges the support of Andr\'e Neves through his ERC Starting  Grant. The second-named author would like to convey his deepest thanks to his advisor Simon Brendle for his invaluable support and continued encouragement. His research was supported in part by an NSF fellowship DGE-1147470 as well as the EPSRC grant EP/K00865X/1. The third-named author expresses his gratitude to Hubert Bray, Simon Brendle, Greg Galloway, Gerhard Huisken, Jan Metzger, and Richard Schoen. A part of this paper was written up during his invigorating stay of two months at Stanford University, which was supported by their Mathematical Sciences Research Center. The second- and third-named authors would also like to thank the Erwin-Schr\"odinger-Institute of the University of Vienna for its hospitality during the special program ``Dynamics of General Relativity: Numerical and Analytic Approaches" in the summer of 2011. It is a pleasure to sincerely congratulate R. Schoen on the occasion of his 65th birthday.


\section{Sheeting of volume-preserving stable CMC surfaces} 

\begin {prop} \label{prop:sheeting} Let $(M, g)$ be a homogeneously regular Riemannian $3$-manifold with non-negative scalar curvature $R \geq 0$. Assume that there is a bounded open set $O \subset M$ and a sequence 
$
\{\Sigma_k\}_{k=1}^\infty
$
of connected closed volume-preserving stable CMC surfaces in $(M, g)$ with
\begin{equation} \label{areasdiverge}
\lim_{k \to \infty} \area (O \cap \Sigma_k) = \infty.
\end{equation}
There exists a totally geodesic stable minimal immersion $\varphi : \Sigma \to M$ that does not cross itself. Moreover, $\Sigma$ with the induced metric is conformal to the plane and the ambient scalar curvature vanishes along this immersion. 
\begin {proof} 
It follows from \eqref{eqn:ChristodoulouYau} and \eqref{areasdiverge} that the mean curvatures of the surfaces tend to $0$ as $k \to \infty$. By Lemma \ref{lem:curvatureestimates}, the second fundamental forms of the surfaces are bounded independently of $k$.
Passing to a subsequence if necessary, we can find $p \in M$ such that 
\begin{equation}\label{eq:area-concentration}
\lim_{k \to \infty} \area (B_r(p) \cap \Sigma_k) = \infty
\end{equation}
for all $r > 0$. Choose base points $x_k^* \in \Sigma_k$ for the submanifolds $\Sigma_k$ with
$
\lim_{k \to \infty} x_k^* = p.
$
Passing to a convergent subsequence, we obtain a complete minimal immersion $\tilde \varphi : \tilde \Sigma \to M$ with base point $\tilde x^*$ such that $\tilde \varphi (\tilde x^*) = p$. As it is the limit of embedded surfaces, this immersion does not cross itself.
Its second fundamental form is bounded. In particular, the area of small geodesic balls in $\tilde \Sigma$ is bounded below uniformly in terms of the radius. We see from \eqref{areasdiverge} that $\tilde \Sigma$ is non-compact. 

Let $\pi : \Sigma \to \tilde \Sigma$ be the universal cover of $\tilde \Sigma$. Let $x^{*} \in \Sigma$ be a point such that $\pi(x^{*}) = \tilde x^{*}$. Consider the immersion $\varphi = \tilde \varphi \circ \pi : \Sigma \to M$.

In the argument below, we denote the second fundamental forms of the submanifolds $\Sigma_k$ and the immersion $\varphi : \Sigma \to M$ by $h_k$ and by $h$ respectively. Let $U \subset \Sigma$ be open, bounded, connected, and simply connected with $x^* \in U$. Fix $r > 0$ sufficiently small. 

Using the curvature bounds and \eqref{eq:area-concentration}, upon passing to a further subsequence, we see that there are $n(k)$ components of $B_r(p)\cap \Sigma_k$ that are geometrically close to one another, where $n(k)$ is strictly increasing in $k$. In fact, we can choose points  $x_k^1, \ldots, x_k^{n(k)} \in B_r(p) \cap \Sigma_k$ contained in these components such that $x_k^j \to p$ as $k \to \infty$ for every $j \geq 1$. Using the maximum principle, we see that for every $j\geq 1$, the submanifolds $\Sigma_k$ with respective base points $x_k^j$ converge to to an immersion which agrees with $\varphi : \Sigma \to M$ after passing to the universal cover. It follows that we can find $u_{k}^{1},\dots,u_{k}^{n(k)} : U \to \R$ such that $u_{k}^{j} \to 0$ in $C^{2}_{loc}(U)$ as $k \to \infty$ for every $j \geq 1$, and such that
\[
\Sigma_k^j = \{ \exp u_{k}^{j}(x)\nu(\varphi(x)) : x \in U\}
\]
are disjoint subsets of $\Sigma_{k}$ for every $j = 1, \ldots, n(k)$. 

Assume that there is a point in $\Sigma$ where $|h|^{2} + \scal \circ \varphi > 2 \delta$ for some $\delta > 0$. Let $U \subset \Sigma$ be a subset as above that contains this point. Fix $k \geq 1$ sufficiently large. Then, for each $j \in \{1,\dots,n(k)\}$, this implies that the surface $\Sigma_k^j$ contains a subset where $|h_k|^2 + R > \delta$ whose area is bounded below independently of $k$. Because $n(k)$ can be taken arbitrarily large, this contradicts \eqref{eqn:ChristodoulouYau}. It follows that $\varphi : \Sigma \to M$ is totally geodesic and $R \circ \varphi = 0$.

To see that $\varphi : \Sigma \to M$ is stable, it is enough to show that every bounded open subset $U \subset \Sigma$ admits a positive Jacobi function. The argument below is similar to \cite[p.\ 333]{Simon:1987}, \cite[p.\ 732]{Meeks-Rosenberg:2005}, or \cite[p.\ 493]{Meeks-Rosenberg:2006}. We may assume that $U$ is simply connected and that $x^* \in U$. By the argument above, $\Sigma_k$ contains two disjoint pieces that appear as small graphs above $U$ whose unit normals approximately point in the same direction. The defining functions of these graphs are ordered. They tend to zero in $C^2_{loc}(U)$ as $k \to \infty$. These functions satisfy the same graphical prescribed constant mean curvature equation on $U$. Hence, their difference is a positive solution of a linear uniformly elliptic partial differential equation. By the Harnack principle, the supremum and the infimum of this solution are comparable on small balls. As in \cite[p.\ 333]{Simon:1987}, we may rescale and pass to a subsequence that converges to a positive solution of the Jacobi equation on $U$.  

It follows from \cite[Theorem 3 (ii)]{Fischer-Colbrie-Schoen} that $\Sigma$ with the induced metric is conformal to the plane.
\end {proof}
\end {prop}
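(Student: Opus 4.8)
The plan is to extract a limit minimal immersion by a compactness argument, then use the "many nearby sheets" phenomenon forced by the divergence of areas to (a) force the limit to be totally geodesic with vanishing ambient scalar curvature, and (b) produce positive Jacobi functions on every bounded piece, hence stability; the conformal type then follows from Fischer-Colbrie--Schoen.

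First I would observe that the Christodoulou--Yau estimate \eqref{eqn:ChristodoulouYau} applied to the $\Sigma_k$, together with the divergence of $\area(O \cap \Sigma_k)$, forces the mean curvatures $H_k \to 0$. Combined with the interior curvature estimates of Lemma \ref{lem:curvatureestimates} for volume-preserving stable CMC surfaces, this gives uniform second fundamental form bounds $|h_k| \le C$ on compact sets independent of $k$. A standard covering/pigeonhole argument then locates a point $p \in M$ where the areas concentrate, i.e. $\area(B_r(p) \cap \Sigma_k) \to \infty$ for every fixed $r > 0$. Choosing base points $x_k^* \to p$ and invoking the usual graphical compactness for immersions with bounded second fundamental form (Arzel\`a--Ascoli on the local graph representations), I pass to a subsequence and obtain a complete minimal immersion $\tilde\varphi : \tilde\Sigma \to M$ through $p$. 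Since each $\Sigma_k$ is embedded, the limit cannot cross itself, its second fundamental form is bounded, small balls in $\tilde\Sigma$ have area bounded below, and the area divergence forces $\tilde\Sigma$ to be noncompact. Passing to the universal cover $\pi : \Sigma \to \tilde\Sigma$ and setting $\varphi = \tilde\varphi \circ \pi$ gives the candidate immersion, which remains totally-non-crossing.

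Next, the heart of the argument: because areas concentrate at $p$ while second fundamental forms stay bounded, on any small ball $B_r(p)$ the surface $\Sigma_k$ must have at least $n(k) \to \infty$ nearly-parallel sheets passing close to $p$ (this is the standard "many leaves" mechanism). For a fixed bounded simply connected $U \subset \Sigma$ containing the base point, each of these sheets can, after applying the maximum principle to match normal directions, be written as a normal graph $u_k^j$ over $\varphi|_U$ with $u_k^j \to 0$ in $C^2_{loc}(U)$. Now suppose toward a contradiction that $|h|^2 + R\circ\varphi > 2\delta$ somewhere; choosing $U$ to contain that point, each of the $n(k)$ graphs $\Sigma_k^j$ over $U$ then carries a definite-area region on which $|h_k|^2 + R > \delta$, and summing over the $n(k)$ disjoint sheets contradicts the uniform bound in \eqref{eqn:ChristodoulouYau} once $n(k)$ is large. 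Hence $|h|^2 + R\circ\varphi \equiv 0$, i.e. $\varphi$ is totally geodesic and the ambient scalar curvature vanishes along it. For stability it suffices (by definition, or by Fischer-Colbrie--Schoen) to produce a positive Jacobi function on each bounded $U$: take two adjacent ordered sheets, note their defining functions solve the same graphical CMC equation so their difference $w_k > 0$ solves a linear uniformly elliptic equation; normalize $w_k$ at the base point, apply the Harnack inequality to keep $\sup$ and $\inf$ comparable on compact sets, and extract a locally uniform limit $w > 0$ solving the Jacobi (stability operator) equation on $U$. Finally, \cite[Theorem 3 (ii)]{Fischer-Colbrie-Schoen} applied to the complete stable minimal immersion $\Sigma$ yields that $\Sigma$ with the induced metric is conformal to the plane.

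The main obstacle is making the "$n(k)$ nearly-parallel sheets" step rigorous and uniform: one must verify that area concentration plus a uniform curvature bound genuinely forces an unbounded number of sheets through a fixed small ball, that these sheets can be chosen disjoint and graphical over a \emph{common} domain $U$ in the universal cover, and that the normals can be aligned via the maximum principle so that the graph functions are genuinely ordered — this alignment is what makes the difference $w_k$ positive rather than merely nonzero. A secondary technical point is controlling integrability/decay so that the local curvature estimates of Lemma \ref{lem:curvatureestimates} apply with constants independent of $k$ on every fixed compact set, which is where homogeneous regularity of $(M,g)$ enters.
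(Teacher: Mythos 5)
Your proposal follows the paper's proof essentially step for step: the Christodoulou--Yau bound combined with divergent area forces $H_k \to 0$; the curvature estimates give compactness; area concentration produces a limit through the universal cover; the ``many sheets'' count against the Christodoulou--Yau bound forces $|h|^2 + R \circ \varphi \equiv 0$; and the Harnack argument on differences of graph functions yields a positive Jacobi function, hence stability, with the conformal type from Fischer-Colbrie--Schoen. Nothing is missing and the approach is the same.
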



\section {Bounded complete stable minimal immersions}

\begin{prop} \label{prop:unboundedness}
Let $(M, g)$ be an asymptotically flat Riemannian $3$-manifold with horizon boundary. Every complete minimal immersion $\varphi : \Sigma \to M$ with uniformly bounded second fundamental form is either unbounded or an embedding of a component of the horizon. 

\begin{proof} 
Assume that the trace $\varphi (\Sigma)$ of the immersion $\varphi : \Sigma \to M$ is contained in a compact set. Let $S$ be the union of the horizon and the closure of $\varphi (\Sigma)$. There is a closed minimal surface in $M$ that contains $S$. To see this, let $r > 1$ large be such that $S \subset B_r$ and such that the  mean curvature of the coordinate sphere $S_r$ with respect to the outward pointing unit normal is bounded below by $H_0 > 0$. 

Let $H \in (0, H_0)$. Consider the functional 
\[
\Omega \mapsto \mathcal{F}_H (\Omega) = \area (\partial \Omega) - H \, \text{vol} (\Omega)
\]
on 
\[
\mathcal{A} = \{ \Omega : \Omega \subset M \text{ is open with smooth boundary and } S \subset \Omega \subset B_r\}.
\]
The curvature bounds from Lemma \ref{lem:curvatureestimates} together with the completeness of the immersion ensure that $S$ acts as an effective geometric barrier for the minimization of this functional in the following sense: There is $\delta > 0$ small depending on $H \in (0, H_0)$ such that given $\Omega \in \mathcal{A}$ with 
\[
\dist (\partial \Omega, \partial (B_r \setminus S)) < \delta
\]
there is $\tilde \Omega  \in \mathcal{A}$ with 
\[
\dist (\partial \tilde \Omega, \partial (B_r \setminus S)) \geq \delta
\]
such that 
\[
\mathcal{F}_H (\tilde \Omega) < \mathcal{F}_H (\Omega).
\]
This follows from a classical calibration argument, see for example \cite [Lemma 7.2] {Duzaar-Fuchs:1990}, based on vector fields as described in Lemma \ref{lem:effectivebarrier}. Standard arguments of geometric measure theory, see for example \cite{Duzaar-Fuchs:1990, Fuchs:1991}, imply that there is a minimizer $\Omega_H \in \mathcal{A}$ of $\mathcal{F}_H$. Its boundary $\Sigma_H = \partial \Omega_H$ is a closed hypersurface in $B_r \setminus S$ with constant (outward) mean curvature $H$ that is strongly stable, i.e., its Jacobi operator is non-negative definite. We obtain that 
\[
\area (\Sigma_H) \leq \area (S_r)
\]
from direct comparison. In conjunction with strong stability, we obtain uniform curvature estimates for $\Sigma_H$ from e.g.\, \cite {Schoen-Simon-Yau:1975} or \cite{Schoen-Simon:1981}. It follows that the Hausdorff distance between $\Sigma_H$ and the horizon tends to zero as $H \searrow 0$, since otherwise we could find (by extraction of a convergent subsequence) a closed minimal surface in $(M, g)$ that is not a component of the horizon. In particular, the trace of the original immersion is contained in a component of the horizon. Since the components are spheres, it follows that the immersion is an embedding. 
\end{proof}
\end{prop}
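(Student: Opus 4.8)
The plan is to argue by contradiction: assume the trace $\varphi(\Sigma)$ is contained in a compact set and show that this forces $\varphi$ to be an embedding of a component of the horizon. The key idea is to manufacture, from the (a priori abstract, possibly non-embedded, possibly non-proper) immersed minimal surface, a genuine \emph{closed embedded} minimal surface containing the set $S := (\text{horizon}) \cup \overline{\varphi(\Sigma)}$, and then run the standard barrier-sweepout argument with $\mathcal{F}_H = \area - H\,\vol$ to conclude that the only such surface is the horizon itself.

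First I would choose $r > 1$ large enough that $S \subset B_r$ and that the coordinate sphere $S_r$ has mean curvature (outward) bounded below by some $H_0 > 0$; this uses asymptotic flatness. Then, for each $H \in (0, H_0)$, I would minimize $\mathcal{F}_H$ over the class $\mathcal{A}$ of open sets $\Omega$ with smooth boundary satisfying $S \subset \Omega \subset B_r$. The crucial point — and what I expect to be the main obstacle — is showing that $S$ acts as an \emph{effective barrier}: a minimizing sequence cannot collapse its boundary onto $S$. Here I would invoke the curvature bounds from Lemma \ref{lem:curvatureestimates} together with completeness of $\varphi$ to build, via Lemma \ref{lem:effectivebarrier}, vector fields supported near $S$ whose divergence has a favorable sign; a calibration/first-variation computation in the style of \cite[Lemma 7.2]{Duzaar-Fuchs:1990} then shows that any competitor with $\dist(\partial\Omega, \partial(B_r \setminus S)) < \delta$ can be strictly improved by pushing off $S$ to distance $\geq \delta$. (The delicate aspect is that $\varphi$ is merely an immersion without the crossing-free hypothesis available a priori in full strength, so one must be careful that the barrier argument only uses the mean-curvature-zero and bounded-$|h|$ properties locally, which it does.)

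Granting the barrier property, standard GMT existence (see \cite{Duzaar-Fuchs:1990, Fuchs:1991}) produces a minimizer $\Omega_H \in \mathcal{A}$; since $\partial\Omega_H$ stays at distance $\geq \delta$ from $S$ and from $S_r$, it is a genuine smooth closed hypersurface $\Sigma_H \subset B_r \setminus S$ of constant outward mean curvature $H$ that is strongly stable (non-negative Jacobi operator, as it minimizes $\mathcal{F}_H$). Direct comparison with the competitor $B_r$ gives $\area(\Sigma_H) \leq \area(S_r)$, a uniform bound. Then strong stability plus this area bound yields uniform curvature estimates for $\Sigma_H$ via \cite{Schoen-Simon-Yau:1975} or \cite{Schoen-Simon:1981}.

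Finally I would let $H \searrow 0$. If the Hausdorff distance between $\Sigma_H$ and the horizon did \emph{not} tend to zero, then by the uniform curvature estimates I could extract a subsequential limit: a smooth closed \emph{minimal} surface in $(M,g)$ that is not (entirely) a component of the horizon — contradicting the fact that $(M,g)$ has horizon boundary, i.e. contains no other closed minimal surfaces outside the horizon. (One should be slightly careful to rule out multiplicity/degeneration, but the area upper bound and lower area bounds on small balls from bounded $|h|$ handle this.) Hence $\Sigma_H \to$ horizon, and since each $\Sigma_H$ encloses $S$, the set $S$ — and in particular $\varphi(\Sigma)$ — must be contained in a component of the horizon. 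The horizon components being spheres, a connected cover of a sphere by a complete minimal surface with bounded second fundamental form must itself be the sphere, so $\varphi$ is an embedding of that component, completing the proof.
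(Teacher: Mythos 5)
Your proposal is correct and takes essentially the same route as the paper's own proof: minimize $\mathcal{F}_H = \area - H\,\vol$ over sets pinched between $S$ and $B_r$, use the calibration barrier from Lemma \ref{lem:effectivebarrier} to keep minimizers away from $S$ and $S_r$, obtain area and curvature estimates from comparison with $B_r$ and strong stability, and send $H \searrow 0$ to force convergence to the horizon via the horizon-boundary hypothesis.
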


\begin {remark} The proof of the preceding lemma should be compared to those of \cite[Lemma 4.1]{Huisken-Ilmanen:2001} and \cite[Theorem 4.1]{GAH}. The key point is to recognize that the trace of the immersion acts as a barrier for area minimization. 
\end {remark}


\section{Top sheets} \label{sec:topsheets}

\begin{lemma} \label{lem:equator-conc}
Let $(M,g)$ be an asymptotically flat Riemannian $3$-manifold. Let
$\varphi:\Sigma\to M$
be an unbounded complete stable minimal immersion that does not cross itself. For every $\varepsilon > 0$ there is $r_{0}>0$ so that for all $r \geq r_{0}$ there is a plane $\Pi = \Pi(r)$ through the origin in the chart at infinity with 
\[
\sup \left\{  r^{-1} \dist (\varphi(x), \Pi) + |\proj_{\Pi}(\nu(x))| : x \in \varphi^{-1}(S_r)\right\}  < \varepsilon .
\]

\begin{proof} All rescalings take place in the chart at infinity. 

Suppose, for a contradiction, that for some $\varepsilon > 0$ there is a sequence  $1 < r_{k}\to \infty$ such that
\[
\sup \{   r_{k}^{-1}\dist (\varphi(x), \Pi) + |\proj_{\Pi}(\nu(x))|  : p \in \varphi^{-1}(S_{r_k})\} \geq \varepsilon 
\] 
for every plane $\Pi$ through the origin. Let $x_k^{*}\in \Sigma$ be points with $|\varphi (x_k^{*})| = r_k$. 
It follows from Proposition \ref{prop:blow-down-is-plane} that there is a plane $\Pi_1$ through the origin so that, after passing to a subsequence, the rescaled immersions 
\[
\varphi^{-1} (M \setminus \overline{ B_1}) \to \R^3 \setminus \overline{ B_{1/r_k} (0)}  \quad \text{ given by } \quad x \mapsto \varphi (x)/r_k
\]
with respective base points $x_k^*$ converge to an immersion 
\[
\varphi_1 : \Sigma_1 \to \mathbb{R}^{3} \setminus \{0\}
\]
with $\varphi_1(\Sigma_1) = \Pi_1 \setminus \{0\}$. Let $y_{k}^{*}\in\Sigma$ be points such that $\varphi(y_{k}^{*})\in S_{r_{k}}$ and 
\begin{align} \label{aux:ec}
r_{k}^{-1 }\dist(\varphi(y_{k}^{*}), \Pi_1) + |\proj_{\Pi_{1}}(\nu(y_{k}^{*}))| \geq \frac {\varepsilon}{2}.
\end{align}
By Proposition \ref{prop:blow-down-is-plane}, there is a plane $\Pi_2$ through the origin such that a subsequence of the immersions 
\[
\varphi^{-1} (M \setminus \overline{ B_1}) \to \R^3 \setminus \overline{ B_{1/r_k} (0)} \quad \text{ given by } \quad x \mapsto \varphi (x) / r_k
\]
with respective base points $y_k^*$ converges to an immersion
\[
\varphi_2 : \Sigma_2 \to \mathbb{R}^3 \setminus \{0\} 
\]
with $\varphi_2(\Sigma_2) = \Pi_2 \setminus \{0\}$. We must have that $\Pi_1 = \Pi_2$ because the original immersion does not cross itself. This contradicts \eqref{aux:ec}. 
\end{proof}
\end{lemma}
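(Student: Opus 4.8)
The plan is to argue by contradiction and to feed the blow-down result Proposition~\ref{prop:blow-down-is-plane} into the argument twice, using the hypothesis that $\varphi$ does not cross itself to reconcile the two blow-down limits. So suppose the conclusion fails: there are $\varepsilon > 0$ and radii $1 < r_k \to \infty$ such that for \emph{every} plane $\Pi$ through the origin in the chart at infinity,
\[
\sup\{\, r_k^{-1}\dist(\varphi(x),\Pi) + |\proj_\Pi(\nu(x))| : x \in \varphi^{-1}(S_{r_k})\,\} \geq \varepsilon .
\]
Here $\varphi^{-1}(S_{r_k}) \neq \emptyset$ for all large $k$ because $\varphi$ is unbounded. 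Pick base points $x_k^* \in \varphi^{-1}(S_{r_k})$; after rescaling by $1/r_k$ in the chart at infinity these lie on the unit sphere $S_1 \subset \R^3$, hence in a fixed compact subset of $\R^3 \setminus \{0\}$.

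First I would apply Proposition~\ref{prop:blow-down-is-plane} to the immersions $x \mapsto \varphi(x)/r_k$ with base points $x_k^*$: after passing to a subsequence these converge, in the pointed sense on $\R^3 \setminus \{0\}$, to an immersion $\varphi_1:\Sigma_1 \to \R^3 \setminus \{0\}$ with $\varphi_1(\Sigma_1) = \Pi_1 \setminus \{0\}$ for some plane $\Pi_1$ through the origin, the limit base point lying in $S_1 \cap \Pi_1$. Now apply the failure of the estimate to the specific plane $\Pi = \Pi_1$: there are points $y_k^* \in \varphi^{-1}(S_{r_k})$ with
\[
r_k^{-1}\dist(\varphi(y_k^*),\Pi_1) + |\proj_{\Pi_1}(\nu(y_k^*))| \geq \frac{\varepsilon}{2}.
\]
A second application of Proposition~\ref{prop:blow-down-is-plane}, this time with base points $y_k^*$, yields (after a further subsequence) a limit immersion $\varphi_2:\Sigma_2 \to \R^3 \setminus \{0\}$ with $\varphi_2(\Sigma_2) = \Pi_2 \setminus \{0\}$ and limit base point in $S_1 \cap \Pi_2$.

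The key step is to show $\Pi_1 = \Pi_2$. If not, the two planes meet transversally along a line through the origin, which contains points of $S_1$. Near such a point $q$, the rescaled surface $\varphi(\Sigma)/r_k$ carries a sheet converging to $\Pi_1$ (from the $x_k^*$ blow-down) and a sheet converging to $\Pi_2$ (from the $y_k^*$ blow-down); these sheets are distinct for $k$ large, since their limits differ near $q$, and since the limiting tangent planes at $q$ are transverse, transversality being an open condition forces the two sheets of $\varphi(\Sigma)/r_k$ to intersect transversally near $q$ once $k$ is large. This makes $\varphi$ cross itself, contrary to hypothesis — the no-crossing property is preserved under the blow-down precisely because a transverse crossing is stable under $C^2_{loc}$ perturbation. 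Granting $\Pi_1 = \Pi_2$, the rescaled points $\varphi(y_k^*)/r_k \in S_1$ subconverge to some $q \in S_1 \cap \Pi_2 = S_1 \cap \Pi_1$, so $r_k^{-1}\dist(\varphi(y_k^*),\Pi_1) \to 0$; and since $g$ is asymptotically flat, $\nu(y_k^*)$ converges to a Euclidean unit normal of $\Pi_2 = \Pi_1$, whence $|\proj_{\Pi_1}(\nu(y_k^*))| \to 0$. Together these contradict the lower bound $\varepsilon/2$, completing the argument.

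The main obstacle is exactly the identification $\Pi_1 = \Pi_2$, i.e.\ the independence of the blow-down limit from the choice of base point on $S_{r_k}$; this is where the assumption that $\varphi$ does not cross itself enters essentially, and some care is needed to argue that two transverse planar sheets through a common point genuinely produce a self-intersection at finite scale and that this is incompatible with the no-crossing property.
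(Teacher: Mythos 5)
Your proof is correct and follows essentially the same route as the paper's: contradiction, a first blow-down at $x_k^*$ to produce $\Pi_1$, a second blow-down at the bad points $y_k^*$ to produce $\Pi_2$, identification $\Pi_1 = \Pi_2$ via the no-crossing hypothesis, and the final contradiction with the lower bound. You simply spell out the transversality argument for $\Pi_1 = \Pi_2$ and the decay of both terms at $y_k^*$, which the paper leaves terse.
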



\begin{prop} \label{prop:pass-to-top}
Let $(M,g)$ be an asymptotically flat Riemannian $3$-manifold with non-negative scalar curvature. Assume that there is an unbounded complete stable minimal injective immersion
$
\varphi:\Sigma\to M. 
$
Then there is a proper such embedding. 

\begin{proof} All rescalings take place in the chart at infinity. 

By Lemma \ref{lem:equator-conc}, after a rotation of the chart at infinity, there is $r > 1$ large so that 
\[
\sup \{  \dist (\varphi(x), \Pi) : x \in \varphi^{-1}(S_r)\} \leq r/2
\]
where $\Pi = \{x^{3}=0\}$ and
\begin{align} \label{aux:passingtopsheet}
\left| \nu(x) \cdot e_3 \right| \geq \frac{1}{2}
\end{align}
for all $x \in \Sigma$ with $|\varphi(x)| = r$.

Let $x_{k}^{*}\in \Sigma$ be points such that $|\varphi(x_{k}^{*})|= r$ and 
\begin{align*} 
\lim_{k \to \infty} \varphi^3(x_{k}^{*}) = \sup \{ \varphi^3 (x) : x \in \Sigma \text{ with } |\varphi(x)| = r\}.
\end{align*}
Here, $\varphi^3 = x^3 \circ \varphi$ on $\varphi^{-1} ( M \setminus \overline B_1)$. The second fundamental form of the immersion is bounded by Lemma \ref{lem:curvatureestimates}. The pointed immersions $\varphi : \Sigma \to M$ with respective base points $x_k^*$ subconverge to an unbounded complete stable minimal immersion $\hat \varphi : \hat \Sigma \to M$ with base point $\hat x^*$ that does not cross itself and such that $\hat\varphi(\hat x^{*})  \in S_r$. 
It follows from Corollary \ref{cor:nocylinders} that $\hat\Sigma$ with the induced metric is conformal to the plane. Lemma \ref{lem:generalquotient} shows that $\hat\varphi$ is injective.
Note that 
\begin {align}\label{eqn:choosingtopsheet}
\hat \varphi^3 (\hat x^*) = \sup \{ \hat \varphi^3 (x) : \hat x \in \hat \Sigma \text{ with } |\hat \varphi(\hat x)| = r\}.
\end{align} 
Thus $\hat\varphi(\hat\Sigma)\cap S_r$ is a disjoint union of traces of complete injectively immersed curves. In view of \eqref{aux:passingtopsheet}, these curves are either infinite spirals or simple and closed. The curve containing $\hat \varphi (\hat x^*)$ is simple and closed by \eqref{aux:passingtopsheet} and \eqref{eqn:choosingtopsheet}. The preimage $\gamma$ of this curve under $\hat \varphi$ is simple and closed in $\hat \Sigma$. By the maximum principle, the image under $\hat \varphi$ of the bounded open region in $\hat \Sigma$ bounded by $\gamma$ is contained in $B_r$. Finally, a continuity argument using Lemma \ref{lem:transverse-int-sph} gives that $\hat \varphi : \hat \Sigma \to M$ is a proper embedding.  
\end{proof}
\end{prop}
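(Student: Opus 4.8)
The plan is to produce the desired embedding as a \emph{topmost sheet} of the given immersion inside a fixed large coordinate sphere, and to show that such topmost sheets are automatically proper. All rescalings and translations below take place in the chart at infinity. First I would normalize $\varphi$ at one large scale: by Lemma \ref{lem:equator-conc}, after rotating the chart at infinity there is $r > 1$ such that $\varphi^{-1}(S_r)$ lies within distance $r/2$ of the plane $\Pi = \{x^3 = 0\}$ and $|\nu(x)\cdot e_3| \geq \tfrac12$ for every $x$ with $|\varphi(x)| = r$; along $S_r$ the immersed surface is thus nearly equatorial with nearly vertical unit normal.

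Next I would pass to the top sheet over $S_r$. Choosing base points $x_k^* \in \Sigma$ with $|\varphi(x_k^*)| = r$ whose heights $\varphi^3(x_k^*) = (x^3\circ\varphi)(x_k^*)$ converge to $\sup\{\varphi^3(x) : |\varphi(x)| = r\}$, and using the uniform second fundamental form bound of Lemma \ref{lem:curvatureestimates}, the pointed immersions $(\varphi, x_k^*)$ subconverge to a complete stable minimal immersion $\hat\varphi : \hat\Sigma \to M$ with base point $\hat x^* \in \hat\varphi^{-1}(S_r)$ that does not cross itself, being a limit of an immersion that does not cross itself. This limit is unbounded: otherwise its trace would be compact with bounded curvature, forcing it, by the reasoning of Proposition \ref{prop:unboundedness}, to be a component of the horizon, which is impossible since it meets $S_r$. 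By construction $\hat\varphi^3(\hat x^*) = \sup\{\hat\varphi^3(x) : |\hat\varphi(x)| = r\}$. Corollary \ref{cor:nocylinders} then gives that $\hat\Sigma$ with the induced metric is conformal to the plane, hence simply connected, and Lemma \ref{lem:generalquotient} gives that $\hat\varphi$ is injective.

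Then I would analyze the slice $\hat\varphi(\hat\Sigma) \cap S_r$. Since $|\nu\cdot e_3| \geq \tfrac12$ there, $\hat\varphi$ meets $S_r$ transversely and the slice is a disjoint union of traces of complete injectively immersed curves; the near-vertical normal forces each of these to be either an infinite spiral or a simple closed curve. The component through $\hat\varphi(\hat x^*)$ must be simple and closed, since along an infinite spiral the height $x^3$ does not attain an interior maximum, contradicting the choice of $\hat x^*$. Its preimage $\gamma$ is therefore a simple closed curve in $\hat\Sigma$, and as $\hat\Sigma$ is simply connected $\gamma$ bounds a disk $D \subset \hat\Sigma$. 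Because $|\cdot|^2$ is subharmonic along minimal surfaces in the chart at infinity, the maximum principle forces $\hat\varphi(D) \subset \overline{B_r}$.

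Finally --- and this is the step I expect to be the main obstacle --- I would upgrade this to properness by a continuity argument in the radius. Let $I$ be the set of $\rho \geq r$ such that $\hat\varphi$ meets $S_\rho$ transversely, the topmost component of $\hat\varphi(\hat\Sigma)\cap S_\rho$ is a simple closed curve bounding a disk $D_\rho \subset \hat\Sigma$ with $\hat\varphi(D_\rho) \subset \overline{B_\rho}$, and $\hat\varphi|_{D_\rho}$ is a proper embedding into $\overline{B_\rho}$. The previous step gives $r \in I$; openness of $I$ follows from transversality and the implicit function theorem; and closedness of $I$ follows from the uniform curvature estimates together with Lemma \ref{lem:transverse-int-sph}, which guarantees that transverse intersections with coordinate spheres persist in the relevant limits. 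Since the disks $D_\rho$ then exhaust $\hat\Sigma$, the immersion $\hat\varphi : \hat\Sigma \to M$ is a proper embedding, as desired. The delicate point is precisely that properness does not pass to limits of immersions in general; what makes the argument go through is that the maximality of $x^3$ along $S_r$ prevents the limit slice from being a spiral, and the maximum principle then confines the enclosed disk to $\overline{B_r}$, so that the folding back toward the end which would destroy properness cannot occur.
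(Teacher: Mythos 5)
Your proposal reproduces the paper's proof of Proposition \ref{prop:pass-to-top} essentially verbatim: you normalize with Lemma \ref{lem:equator-conc}, select the top-sheet limit via base points maximizing $\varphi^{3}$ on $\varphi^{-1}(S_{r})$, invoke Corollary \ref{cor:nocylinders} and Lemma \ref{lem:generalquotient} to get a simply connected injective limit, rule out the spiral at $S_{r}$ using the maximality of the height, confine the enclosed disk to $\overline{B_{r}}$ by the maximum principle, and finish with a continuity/open--closed argument in the radius using Lemma \ref{lem:transverse-int-sph} --- all exactly as in the paper, with the open--closed bookkeeping spelled out a bit more explicitly. One small caveat: your justification that the limit $\hat\varphi$ is unbounded appeals to ``the reasoning of Proposition \ref{prop:unboundedness},'' which requires horizon boundary --- a hypothesis Proposition \ref{prop:pass-to-top} does not carry; the cleaner argument (which the paper leaves implicit) is simply that a bounded complete minimal immersion hitting $S_{r}$ would have to touch the strictly mean-convex sphere $S_{\rho}$ at its outermost radius $\rho\geq r$ from the inside, contradicting the maximum principle.
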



\section{Proofs of main theorems}

\begin {proof} [Proof of Theorem \ref{thm:stableminimalproper}] Any non-compact, proper immersion $\varphi : \Sigma \to M$ must have unbounded trace. It follows from Corollary \ref{cor:nocylinders} that $\Sigma$ with the induced metric is conformal to the plane. The Ricci tensor of the Schwarzschild metric \eqref{eqn:Schwarzschild} is given by
\begin{equation*}
\frac{m}{|x|^3} \left( 1+ \frac {m}{2 |x|} \right)^{-2}\left( \delta_{ij} - 3  \frac{x^k x^\ell}{ |x|^2} \delta_{ik} \delta_{j \ell} \right) dx^i \otimes dx^j.
\end{equation*}
In conjunction with Lemma \ref{lem:normalbecomesradial}, we see that 
\begin{align} \label{aux:estimateRicci}
\Ric(\nu,\nu) (x) \geq \frac{m}{2}  |\varphi (x)|^{-3}
\end{align}
holds for all $x \in \Sigma$ with $|\varphi (x)|$ sufficiently large. Since the immersion is proper, it follows that the negative part of $\Ric (\nu, \nu)$ is integrable. Using the conformal invariance of the Dirichlet energy in dimension two, the logarithmic cut-off trick, and Fatou's lemma, we obtain that 
\begin{align} \label{aux:properimmersion}
\int_{\Sigma} |h|^2 + \Ric (\nu, \nu) \leq 0
\end{align}
from stability. It follows that the function 
\[
x \mapsto |\varphi(x)|^{-3}
\]
is integrable along the immersion. Using also the Gauss equation \eqref{eqn:Gaussequation} and the estimate 
\begin{align} \label{aux:estimatescalar}
\scal \circ \varphi (x) = o ( |\varphi (x)|^{-3}) \quad \text{ as } |\varphi(x)| \to \infty,
\end{align}
we see that the Gauss curvature of the immersion is integrable. Rewriting the integrand in \eqref{aux:properimmersion} using the Gauss equation in the manner of R. Schoen and S.-T. Yau, we conclude that 
\[
\frac{1}{2} \int_\Sigma |h|^2 + \scal \circ \varphi \leq \int_\Sigma K.
\] 
In particular,
\begin{align} \label{aux2:properimmersion}
0 \leq \int_\Sigma K. 
\end{align}
For $r > 1$ sufficiently large, we have that $\Sigma_r = \varphi^{-1} (B_r)$ is a smooth bounded region by Lemma \ref{lem:transverse-int-sph}. In fact, it follows from the argument in the proof of Lemma \ref{lem:transverse-int-sph} that $\Sigma_r$ is connected. The maximum principle gives that $\Sigma_r$ is simply connected. 

At this point, we argue as in \cite[Proposition 3.6]{stableCMC}, except that we use limits of pointed immersions instead of limits in the sense of geometric measure theory. By Proposition \ref{prop:blow-down-is-plane}, the geodesic curvature of the boundary of $\Sigma_r$ with respect to the induced metric is given by 
\[
\kappa = (1 + o (1))/r \quad \text{ as } r \to \infty.
\]
Moreover,\footnote{In fact, either $1 = \limsup_{r \to \infty} {\text{length} (\partial \Sigma_r)}/{2 \pi r}$ or $2 \leq \limsup_{r \to \infty} {\text{length} (\partial \Sigma_r)}/{2 \pi r} $.}
\[
\limsup_{r \to \infty} \frac{\text{length} (\partial \Sigma_r)}{2 \pi r} \geq 1.
\]
Recall that the Gauss-Bonnet formula reads
\[
\int_{\Sigma_r} K + \int_{\partial \Sigma_r} \kappa = 2 \pi .
\]
By \eqref{aux2:properimmersion}, we obtain that   
\[
\limsup_{r \to \infty} \frac{\text{length} (\partial \Sigma_r)}{2 \pi r} = 1 \quad \text{ and } \quad \int_{\Sigma} K = 0. 
\]
A modification of the argument in \cite[p.\ 209]{Fischer-Colbrie-Schoen} using the logarithmic cut-off trick in the construction of the test functions $\zeta$ shows that $K = 0$; cf. \cite[p. 11] {Carlotto}. This is incompatible with the Gauss equation \eqref{eqn:Gaussequation} and the estimates \eqref{aux:estimateRicci} and \eqref{aux:estimatescalar}. 
\end {proof}

\begin {remark} \label{rem:if} The argument from \cite{Fischer-Colbrie-Schoen} applied as in the last step of the preceding proof shows that the surface $\Sigma \subset M$ in Proposition \ref{prop:simplepmt} is intrinsically flat.
\end {remark}


\begin{proof}[Proof of Theorem \ref{thm:reduction}]
The domain $\Sigma$ with the induced metric is conformal to the plane by Corollary \ref{cor:nocylinders}. If the immersion is injective, the result follows from Proposition \ref{prop:pass-to-top}. If not, it follows from Remark \ref{rem:willbeinjective} and Lemma \ref{lem:stability-factors} that the immersion $\varphi : \Sigma \to M$ factors to an unbounded complete stable minimal immersion $\tilde \varphi : \tilde \Sigma \to M$ through a side-preserving covering $\pi : \tilde \Sigma \to \Sigma$. Note that $\tilde \Sigma$ is cylindrical by topological reasons. This is  impossible by Corollary \ref{cor:nocylinders}. 
\end{proof}


\begin{proof} [Proof of Theorem \ref{thm:stableminimalnocrossing}] This is immediate from Theorems \ref{thm:reduction} and \ref{thm:stableminimalproper}, Lemma \ref{lem:curvatureestimates}, and Proposition \ref{prop:unboundedness}.
\end{proof}


\begin {proof} [Proof of Theorem \ref{thm:Schoenrigidity}]

We first deal with the case where the boundary of $M$ is empty. 

Let $r_0 > 0$ be as in Appendix \ref{sec:cf}. Let $\rho_0 > 1$ be such that $S_\rho$ is convex for all $\rho \geq \rho_0$. Every closed minimal surface of $(M, g)$ is contained in $B_{\rho_0}$. 

Let $\Sigma \subset M$ be a connected unbounded properly embedded and separating surface that is area-minimizing with respect to $g$. Fix a component $M_+$ of the complement of $\Sigma$ in $M$ and choose a point $p \in M_+$ to the following specifications: 
\begin {itemize}
\item $B_{\rho_0}$ is disjoint from $\{x \in M : \dist_g(x, p) < 4 r\}$;
\item $r = \dist_g(\Sigma, p)/2 < r_0$;
\item $\Sigma$ intersects $\{x \in M : \dist_g(x, p) < 4 r\}$ in a single component, and the function $\dist_g( \, \cdot \, , p)$ is decreasing in the direction of the unit normal of this component that is pointing into $M_+$.

\end {itemize}

In Appendix \ref{sec:cf}, we construct a family of conformal Riemannian metrics $\{g(t)\}_{t \in (0, \epsilon)}$ on $M$ with the following properties (see also Figure \ref{Figure 1}): 
\begin {enumerate} [(i)]
\item $g(t) \to g$ smoothly as $t \to 0$;
\item $g(t) = g$ on $\{x \in M :  \dist_g (x, p) \geq 3 r \}$;
\item $g(t) \leq g$ as quadratic forms on $M$, with strict inequality on $\{x \in M : r < \dist_g (x, p) < 3 r\}$;
\item the scalar curvature of $g(t)$ is positive on $\{x \in M : r < \dist_g(x, p) < 3 r\}$;
\item the region $M_+$ is weakly mean-convex with respect to $g(t)$. 
\end {enumerate}

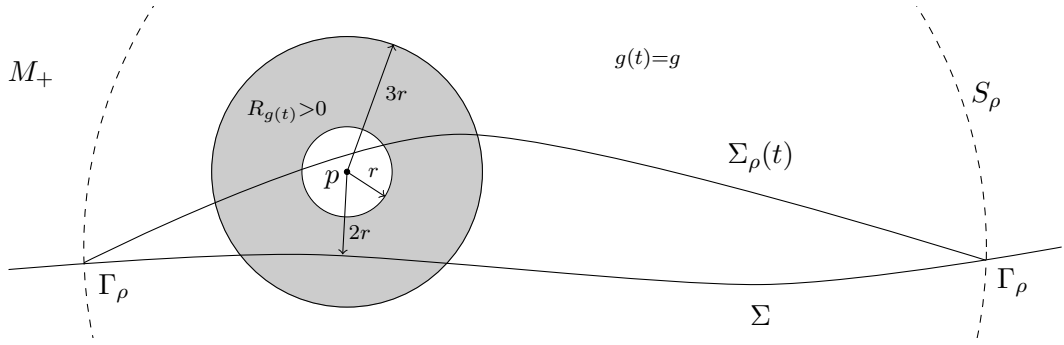
\begin{figure}[h!] 
\begin{tikzpicture}
	\filldraw [white] (-7,-1.2) rectangle (7,3.4);

	\filldraw [opacity = .2] (-2.5,1) circle (1.8);
	\filldraw [white] (-2.5,1) circle (.6);
	
	\draw (-2.5,1) circle (.6);
	\draw (-2.5,1) circle (1.8);
	
	\filldraw (-2.5,1) circle (1pt) node [shift={(-.2,-.1)}] {$p$};
	\draw [->] (-2.5,1) -- node [shift = {(.1,.15)}] {$\scriptstyle r$} +(-33:.6);
	\draw [->] (-2.5,1) -- node [shift = {(.2,-.25)}] {$\scriptstyle 2r$} +(-93:1.1);
	\draw [->] (-2.5,1) -- node [shift = {(.35,.2)}] {$\scriptstyle 3r$} +(70:1.8);

	\begin{scope}
		\clip (-7,-1.2) rectangle (7,3.2);
		\draw [dashed] (0,0) circle (6);
	\end{scope}
	
	\draw plot [smooth] coordinates {(-7,-.3) (-3,-.1) (3,-.5) (7,0)};
	\node at (3,-.9) {$\Sigma$};
	
	\draw plot [smooth] coordinates {(182:6) (-1,1.5)  (-1.7:6)};
	\node at (3,1.2) {$\Sigma_{\rho}(t)$};
	
	\node at (-5.6,-.55) {$\Gamma_{\rho}$};
	\node at (6.35,-.47) {$\Gamma_{\rho}$};
	\node at (6,2) {$S_{\rho}$};
	
	\node at (1.5,2.5) {$\scriptstyle g(t) = g$};
	\node at (-3.3,1.8) {$\scriptstyle R_{g(t)} > 0$};

	\node at (-6.7,2.3) {$M_{+}$};
\end{tikzpicture}
\caption{A diagram of the perturbed metric $g(t)$ and corresponding surface $\Sigma_{\rho}(t)$ used in the proof of Theorem \ref{thm:Schoenrigidity}.} \label{Figure 1}
\end{figure}
By taking $\epsilon >0$ smaller if necessary, we may assume that all closed minimal surfaces of $(M, g(t))$ are contained in $B_{\rho_0}$.

According to Proposition \ref{prop:simplepmt}, for all $\rho \geq \rho_0$ sufficiently large, the intersection of $\Sigma$ with $S_\rho$ is transverse in a nearly equatorial circle. We denote this circle by $\Gamma_\rho = \Sigma \cap S_\rho$. Consider all properly embedded surfaces in $M$ that have boundary $\Gamma_\rho$ and which together with $\Sigma \cap B_\rho$ bound an open subset of $M_+ \cap B_\rho$. Using (v) and standard existence results from geometric measure theory, we see that among all these surfaces there is one  --- call it $\Sigma_\rho (t)$ ---  that has least area with respect to $g(t)$. This surface is disjoint from $M_+ \cap S_\rho$ by convexity. It has one component with boundary $\Gamma_\rho$. Its other components are closed minimal surfaces in $(M, g(t))$ that are disjoint from $\{x \in M : \dist_g(x, p) < 3 r\}$. Importantly though, $\Sigma_\rho(t)$ intersects $\{x \in M : \dist_g (x, p) < 3 r\}$, since otherwise,
\begin{eqnarray} \label{eqn:otherwise}
\area_g (\Sigma_\rho(t)) = \area_{g(t)} (\Sigma_\rho(t)) \leq \area_{g(t)} (\Sigma \cap B_\rho) < \area_g (\Sigma \cap B_\rho).
\end{eqnarray}
The strict inequality holds on account of (iii) and because $\Sigma$ intersects $\{x \in M : \dist_g(x, p) < 3 r\}$. Observe that \eqref{eqn:otherwise} violates the area-minimizing property of $\Sigma$ with respect to $g$.

Using standard convergence results from geometric measure theory, we now find a connected unbounded properly embedded separating surface $\Sigma(t) \subset M$ as a subsequential geometric limit of $\Sigma_\rho(t)$ as $\rho \to \infty$. By construction, $\Sigma(t)$ is contained in $M_+ \cup \Sigma$ where it is area-minimizing with respect to $g(t)$. Moreover, $\Sigma (t)$ intersects $\{x \in M : \dist_g (x, p) \leq 3 r\}$. If $\Sigma (t)$ intersects $\{x \in M : \dist_g(x, p) < 3 r\}$, then it also intersects $\{x \in M : \dist_g(x, p) \leq r\}$ because of (iv) and Proposition \ref{prop:simplepmt}. Passing to a subsequential geometric limit as $t \to 0$, we obtain a connected unbounded properly embedded separating surface $\Sigma_+ \subset M$ that is contained in $M_+ \cup \Sigma$ where it minimizes area with respect to $g$. Using now the area-minimizing property of $\Sigma$, we see that $\Sigma_+$ is in fact area-minimizing in all of $M$. Note that $\Sigma$ intersects $\{x \in M : \dist_g(x, p) < 3 r\}$ while it is disjoint from $\{x \in M : \dist_g(x, p) \leq r\}$. It follows from the maximum principle that $\Sigma$ and $\Sigma_+$ are disjoint. 

Repeating this argument with choices of $p \in M_+$ converging to a fixed point on $\Sigma$, we obtain a sequence of totally geodesic intrinsically flat planes in $M$ (see Proposition \ref{prop:simplepmt} and Remark \ref{rem:if}) along which the ambient scalar curvature vanishes and that converge to $\Sigma$ from one side. Proceeding as in \cite[p.\ 333]{Simon:1987} but using the first variation of the second fundamental form instead of the Jacobi equation, cf. \cite{Anderson-Rodriguez:1989} and \cite{Liu:2013}, we obtain a positive function $f \in C^\infty (\Sigma)$ such that 
\begin{align} \label{eqn:firstvariationsff}
(\nabla^2_\Sigma f) (X, Y) + Rm (\nu, X, Y, \nu) f = 0
\end{align}
for all tangent fields $X, Y$ of $\Sigma$. Here, $\nu$ is a unit normal field of $\Sigma$. Tracing this equation and using that $\Ric (\nu, \nu) = 0$ (this follows from the Gauss equation), we obtain that 
\[
\Delta_\Sigma f = 0.
\] 
It follows that $f$ is a positive constant. Going back to the original equation \eqref{eqn:firstvariationsff}, we see that $\Rm (\nu, X, Y,\nu) = 0$ whenever $X, Y$ are tangential to $\Sigma$. The Codazzi equation implies that $\Rm (X, Y, Z, \nu) = 0$ provided that $X, Y, Z$ are tangential, and the Gauss equation gives that $\Rm (X, Y, Z, W) = 0$ whenever $X, Y, Z, W$ are tangential. It follows that the ambient curvature tensor vanishes along $\Sigma$. 

We may repeat this argument, beginning with any surface $\Sigma_{+}$ constructed as above. It follows that an open neighbourhood of $\Sigma$ in $(M, g)$ is flat and in fact isometric to standard $\R^2 \times (- \epsilon, \epsilon)$ for some $\epsilon > 0$. Moreover, the surfaces in $M$ that correspond to $\R^2 \times \{z\}$ where $z \in (- \epsilon, \epsilon)$ are all area-minimising. Using standard compactness properties of such surfaces and a continuity argument, we conclude that $(M, g)$ is isometric to flat $\R^3$. 

We now turn to the general case where $M$ has boundary. Consider $\Omega \in \mathcal{F}$ with non-compact area-minimizing boundary $\Sigma \subset M$. The unique non-compact component $\Sigma_0 \subset M$ of $\Sigma$ is a separating surface. Let $M_-$ and $M_+$ denote the two components of its complement in $M$. Note that the interior of $\Omega \cap M$ agrees with either $M_-$ (Case 1) or $M_+$ (Case 2) outside of $B_{\rho_0}$. The proof that $g$ is flat in $M_+$ proceeds exactly as above, except for the following change. In Case 1, we let $\Sigma_\rho (t)$ have least area among properly embedded surfaces with boundary $\Gamma_\rho$ that bound together with $\Sigma_0 \cap B_\rho$ in $M_+ \cap B_\rho$ and relative to $M_+ \cap \partial M$. In Case 2, we let $\Sigma_\rho(t)$ have least area among properly embedded surfaces with boundary $\Gamma_\rho$ that bound together with $M_+ \cap S_\rho$ in $M_+ \cap B_\rho$ and relative to $M_+ \cap  \partial M$. Theorem \ref{thm:Schoenrigidity} follows upon switching the roles of $M_-$ and $M_+$.
\end {proof}

\begin {remark} The use of the conformal change of metric in this proof is inspired by an idea of G. Liu in his classification of complete non-compact Riemannian $3$-manifolds with non-negative Ricci curvature \cite{Liu:2013}. The observation \eqref{eqn:otherwise} is crucial in the proof of Theorem \ref{thm:Schoenrigidity}, as we use it to be sure that the surfaces $\Sigma_\rho(t)$ do not run off as $\rho \to \infty$. This observation is not needed for Theorem \ref{thm:rigidityslabs} below, since the solutions of Plateau problems considered in the proof cannot escape the slab as we pass to the limit. We point out that at a related point in the work of M. Anderson and L. Rodr\'iguez \cite{Anderson-Rodriguez:1989}, their assumption of non-negative Ricci curvature is used tacitly in their delicate estimation of comparison surfaces \cite[(1.5)]{Anderson-Rodriguez:1989}. 
\end {remark}

\begin{proof} [Proof of Theorem \ref{thm:rigidityslabs}] Since $(M, g)$ has horizon boundary, $M$ is diffeomorphic to the complement of a finite union of open balls with disjoint closures in $\R^3$. Let $\Omega \subset M$ be the connected region bounded by two disjoint unbounded properly embedded complete minimal surfaces $\Sigma_0, \Sigma_1 \subset M$. By solving a sequence of Plateau problems in $\Omega \cap B_r$ with boundary on $\Omega \cap S_r$ and passing to a subsequential geometric limit as $r \to \infty$, we obtain an unbounded properly embedded boundary $\Sigma \subset M$ that is contained in $\bar \Omega$ where it is area-minimizing with respect to $g$. 
In particular, every component of $\Sigma$ is a stable minimal surface. By the maximum principle, if such a component intersects with $\Sigma_0$ or $\Sigma_1$, then it coincides with the respective surface. By Theorem \ref{thm:Carlotto}, every unbounded component is a totally geodesic flat plane along which the ambient scalar curvature vanishes. We may now proceed as in the proof of Theorem \ref{thm:Schoenrigidity}.
\end{proof}


\begin{proof}[Proof of Theorem \ref{thm:largeCMC}]
Assume that there exist a compact set $C \subset M$ and closed volume-preserving stable CMC surfaces $\Sigma_k \subset M$ with $\Sigma_{k}\cap C \not =\emptyset$ and $\area(\Sigma_{k})\to\infty$. Suppose that 
\[
\sup_{k} \area(B_r\cap \Sigma_{k}) < \infty
\]
for every $r > 1$. Using the methods from \cite{stableCMC} we find an unbounded complete stable minimal surface $\Sigma \subset M$ that is properly embedded. (In fact, the surface has quadratic area growth.) In conjunction with Theorem \ref{thm:Carlotto}, this contradicts our hypothesis.\footnote{The proof of Theorem \ref{thm:Carlotto} simplifies considerably for surfaces with quadratic area growth. Indeed, the arguments in \cite[Sections 3 and 4]{stableCMC} show that $\int_{\Sigma}K = 0$. It follows from \cite[p.\ 209]{Fischer-Colbrie-Schoen} that $\Sigma$ is flat with its induced metric. Lemma \ref{lem:normalbecomesradial} is quite elementary for surfaces with quadratic area growth, see the argument in \cite[Lemma 3.5]{stableCMC}. Finally, the Gauss equation rearrangement argument applied in the manner of R. Schoen and S.-T. Yau leads to a contradiction.} 

Assume now that
\[
\sup_{k} \area(B_r\cap \Sigma_{k}) = \infty,
\]
for some $r > 1$. Using Proposition \ref{prop:sheeting} we obtain a complete stable minimal immersion $\varphi: \Sigma \to M$ that does not cross itself and where $\Sigma$ with the induced metric is conformal to the plane. Such an immersion must be unbounded by Proposition \ref{prop:unboundedness} and the fact that the components of the horizon are spheres. This contradicts Theorem \ref{thm:reduction}. \end{proof}

\begin {remark} We remark that in the preceding proof, because the immersion at hand is totally geodesic, the argument for ``passing to the top sheet" simplifies. Indeed, we obtain the estimate \[|h_{\delta}(x)|\leq O(|\varphi(x)|^{-2})\] as $|\varphi (x)| \to \infty$ for the Euclidean second fundamental form $h_\delta$ of the immersion. This can be integrated up at infinity to show that the immersion is essentially a union of multi graphs above a fixed plane outside a large compact set. 
\end {remark}


\begin{proof} [Proof of Theorem \ref{thm:largeiso}]

Assume that for a sequence $V_i \to \infty$ there is no isoperimetric region $(M, g)$ of volume $V_i$. The argument in the proof of \cite[Theorem 1.2]{hdiso} (see also \cite[Theorem 2]{Nardulli:existence} for a much more general version of this line of argument in the case where the horizon is empty) shows that there is a minimizing sequence for 
\begin{align} \label{aux:iso}
\inf \{ \area (\partial \Omega) : \Omega \subset M \text{ smooth open region of volume $V_i$ containing the horizon}\}
\end{align}
consisting of a divergent sequence of coordinate balls of radii $r_j (V_i)$ and a \emph{residual} isoperimetric region $\tilde \Omega (V_i)$, and that the volumes of these residual regions diverges as $i \to \infty$. Moreover, we have that 
\[
\lim_{j \to \infty} \frac{n-1}{r_j (V_i)} =  H(V_i)
\]
where $ H(V_i)$ is the (outward) mean curvature scalar of $\partial \tilde \Omega (V_i)$. Let $\tilde r(V_i) = 2 / H(V_i)$. The blow-down argument in \cite{hdiso} shows that $\tilde \Omega (V_i)$ is close to a coordinate ball of radius $1$ upon rescaling by $\tilde r(V_i)$ when $i$ is sufficiently large. We conclude that \eqref{aux:iso} is almost achieved by the union of two large disjoint coordinate balls of comparable radii provided $i$ is sufficiently large. This contradicts the Euclidean isoperimetric inequality. \end{proof}


\appendix 

\section {Basic notions and conventions} \label{sec:definitions}

Consider a complete Riemannian $3$-manifold $(M, g)$, possibly with boundary. 

We say that $(M, g)$ is \emph{asymptotically flat} if there are a compact subset $K \subset M$ and a chart 
\[
M \setminus K \cong \{x \in \R^3: |x| > \frac{1}{2}\}
\]
so that the components of the metric tensor have the form 
\begin{align*}
g_{ij} = \delta_{ij} + b_{ij}
\end{align*} 
where
\[
|b_{ij}|  + |x||\partial_k b_{i j} | + |x|^{2}|\partial_{k } \partial_\ell  b_{ij}| = o (1) \quad \text{ as }  x \to \infty.
\]
Such a chart is called a \emph{structure at infinity}. We always fix such a chart when introducing an asymptotically flat Riemannian manifold and refer to it as the \emph{chart at infinity}. We also define a smooth positive function 
\[
|\cdot| : M \to (0, \infty)
\] 
that coincides with the Euclidean distance from the origin in $\R^3 \setminus B_1(0)$ in the above chart and which on $K$ is bounded by $1$. Given $r > 1$, we let 
\[
B_r = \{p \in M : |p| < r\} \quad \text{ and } \quad S_r = \{p \in M : |p| = r\}.
\]
If the scalar curvature of $(M, g)$ is integrable, then the limit
\begin{align} \label{def:mass}
\lim_{r \to \infty} \frac{1}{16 \pi} \int_{\{|x| = r\}}  \sum_{i, j = 1}^3 \left(\partial_i g_{ij} - \partial_j g_{ii} \right) \frac{x^j}{|x|}
\end{align}
exists. It is independent of the choice of structure at infinity \cite{Bartnik:1986} and called the \emph{ADM-mass} (after R. Arnowitt, S. Deser, and C. W. Misner \cite{ADM:1961}) of the asymptotically flat manifold $(M, g)$.  

We say that $(M, g)$ asymptotically flat has \emph{horizon boundary} if its only closed minimal surfaces are the components of its boundary. It is known that the boundary components of such $(M, g)$ are area-minimizing spheres. Moreover, $M$ is diffeomorphic to the complement of a finite union of open balls with disjoint closures in $\R^3$. See \cite[Lemma 4.1]{Huisken-Ilmanen:2001} and the references therein. 

Let $m \in \R$. We say that $(M, g)$ is \emph{asymptotic to Schwarzschild} with mass $m$ if there exists a chart as above such that
\begin{equation}\label{eq:asymp-Schw}
g_{ij} = \left(1 + \frac{m }{2 |x|}\right)^4 \delta_{ij} + c_{ij}
\end{equation} 
where
\[
|x| |c_{ij}|  + |x|^{2}|\partial_k c_{i j} | + |x|^{3}|\partial_{k} \partial_{\ell}  c_{ij}| = o (1) \quad \text{ as } x \to \infty.
\]

We say that an immersion $\varphi : \Sigma \to M$ \emph{does not cross} itself if given $x_1, x_2 \in \Sigma$ with $\varphi (x_1) = \varphi (x_2)$ there are $U_1, U_2 \subset \Sigma$ open with $x_1 \in U_1$ and $x_2 \in U_2$ such that $\varphi (U_1) = \varphi (U_2)$ and so that the restrictions of $\varphi : \Sigma \to M$ to $U_1$ and $U_2$ are embeddings. 

The concept of ``immersions that do not cross themselves" arises naturally when studying limits of injective immersions of co-dimension one. 


Consider a two-sided immersion $\varphi : \Sigma \to M$ of a boundaryless surface $\Sigma$ with unit normal $\nu : \Sigma \to T M$. 

Below, we use $\Ric$ and $R$ to denote the ambient Ricci tensor and scalar curvature, we write $H$ and $h$ for the (scalar) mean curvature and the second fundamental form of the immersion with respect to the designated unit normal, we denote by $K$ the Gauss curvature of the induced metric $\varphi^* g$ on $\Sigma$, and we compute gradients and lengths and perform integration with respect to the induced metric. 
  
Recall that $\varphi : \Sigma \to M$ is a \emph{stable minimal immersion} if its mean curvature vanishes and 
\begin{align} \label{eqn:stabilityminimal}
\int_\Sigma |\nabla u|^2 \geq \int_\Sigma (|h|^2 + \Ric (\nu, \nu)) u^2 \quad \text{ for all } \quad u \in C^\infty_c(\Sigma).  
\end{align}
Such immersions arise in area minimization; cf. Appendix \ref{sec:variationformulae}. 

Recall that $\varphi : \Sigma \to M$ is a \emph{volume-preserving stable CMC immersion} if its mean curvature is constant and 
\[
\int_\Sigma |\nabla u|^2 \geq \int_\Sigma (|h|^2 + \Ric (\nu, \nu)) u^2 \quad \text{ for all } \quad u \in C^\infty_c(\Sigma) \text{ with } \int_\Sigma u = 0.  
\] 
Such immersions arise in area minimization with a (relative) volume constraint, i.e. in the isoperimetric problem; cf. Appendix \ref{sec:variationformulae}. 

Finally, recall the Gauss equation 
\begin{align} \label{eqn:Gaussequation}
R \circ \varphi = 2 K + |h|^2 - H^2 + 2 \Ric (\nu, \nu).
\end{align}

We emphasize that in this paper, we adopt the convention that constant mean curvature immersions with non-zero mean curvature and stable minimal immersions are \emph{by definition} two-sided. The immersions considered here are all of co-dimension one. The domain of a complete immersion is connected by definition.

The notion of convergence for pointed immersions and compactness results in the presence of uniform curvature bounds are used throughout the paper and are reviewed in Appendix \ref{sec:convergenceimmersions}.


\section {A compactness result for pointed immersions} \label{sec:convergenceimmersions}

For a proof of the following compactness result, see \cite{Cooper}. 

\begin{lemma} [Limits of immersions] \label{lem:limitsofimmersions}
Let $(M, g)$ be a complete Riemannian manifold. Let  
\[
\{\varphi_k : \Sigma_k \to M\}_{k =1}^\infty
\] 
be a sequence of complete constant mean curvature immersions such that
\[
\sup_{k } \sup_{x \in \Sigma_k} |h_k (x)| < \infty.
\]
Assume that there are points $x_k^* \in \Sigma_k$ such that the limit
\[
\lim_{k \to \infty} \varphi_k (x_k^*)
\]
of points in $M$ exists. There is a complete constant mean curvature immersion 
\[
\varphi : \Sigma \to M
\]
and a point $x^* \in \Sigma$ so that a subsequence of the immersions 
\[
\varphi_{k} : \Sigma_k \to M \quad \text{ with base points } \quad x_k^*
\]
converges to 
\[
\varphi: \Sigma \to M \quad \text{ with base point } \quad x^*
\]
 in the sense of pointed immersions.
By this we mean that the following holds up to passing to a subsequence. Let $\nu$ be a unit normal field of $\varphi$. There are bounded open subsets $U_k \subset \Sigma_k$ and $V_k \subset \Sigma$ with 
\[
x_k^* \in U_k 
\] 
and 
\[
x^{*} \in V_1 \subset V_2 \subset \ldots \quad \text{ and } \quad \Sigma = \bigcup_{k=1}^\infty V_k 
\]
as well as diffeomorphisms 
\[
\psi_k : V_k \to U_k
\]
and functions $u_k \in C^\infty (V_k)$ with
\[
u_k \to 0 
\]
in $C^\infty_{loc}(V_\ell)$ as $\ell \leq k \to \infty$ for every $\ell \geq 1$ and
\[
\psi^{-1}_k (x_k^*) \to x^*
\] 
such that
\[
(\varphi_k \circ \psi_k )(x)  = \exp_{\varphi (x)} ( u_k (x)  \nu (x) )
\]
for all $x \in V_k$.  
\end{lemma}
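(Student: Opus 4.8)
\textbf{Proof proposal for Lemma \ref{lem:limitsofimmersions}.}

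The plan is to localize the problem near the base points using the uniform bound on the second fundamental form, and then to diagonalize. First I would fix a unit normal $\nu_k$ along $\varphi_k$ near $x_k^*$ and, using the bound $\sup_k \sup_{\Sigma_k} |h_k| \le \Lambda$ together with the completeness of $(M,g)$, produce a uniform radius $\rho_0 > 0$ (depending only on $\Lambda$ and the geometry of $M$ near the limit point $q = \lim_k \varphi_k(x_k^*)$) such that the intrinsic ball $B^{\Sigma_k}_{\rho_0}(x_k^*)$ is embedded as a normal graph of a function $f_k$ with uniformly bounded $C^1$ (in fact $C^{1,\alpha}$) norm over a fixed-size disk $D \subset T_{q}M$ in the hypersurface direction. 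The standard input here is the graphical interior estimate for CMC hypersurfaces with bounded second fundamental form: the constant mean curvature equation for the graph of $f_k$ is uniformly elliptic with bounds depending only on $\Lambda$ and the ambient metric, so Schauder estimates give uniform $C^{k,\alpha}$ control on all derivatives on slightly smaller disks. Then Arzel\`a--Ascoli extracts a subsequence with $f_k \to f_\infty$ in $C^\infty_{loc}(D)$, and the graph of $f_\infty$ is a CMC hypersurface piece through (a point over) $q$ with the same mean curvature value (passing to a limit in the CMC equation, after noting the mean curvatures $H_k$ are themselves uniformly bounded and subconverge).

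Next I would enlarge the domain by an exhaustion argument. Having built the limit over the fixed disk $D$, I pick a countable collection of points $y^{(m)}$ filling out what will become $\Sigma$ — concretely, one reconstructs $\Sigma$ abstractly as the increasing union of the graph-domains $V_k$, defined by following geodesics in the emerging limit surface — and at each such point repeats the graphical extraction, using a diagonal subsequence so that convergence holds simultaneously on every $V_\ell$. The completeness of the limit $\varphi:\Sigma\to M$ and the fact that $\Sigma = \bigcup_k V_k$ then follow because the uniform graph radius $\rho_0$ prevents the limit from terminating at finite intrinsic distance: any intrinsic geodesic in $\Sigma$ can be continued a further definite amount, so $\Sigma$ is geodesically complete. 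The diffeomorphisms $\psi_k : V_k \to U_k \subset \Sigma_k$ and the defining functions $u_k$ with $u_k \to 0$ in $C^\infty_{loc}$ are read off directly from the graphical representations: $U_k$ is the corresponding embedded piece of $\Sigma_k$, and $\exp_{\varphi(x)}(u_k(x)\nu(x))$ records the (small, and locally uniformly small) normal displacement of $\varphi_k\circ\psi_k$ from $\varphi$.

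The main obstacle I anticipate is not any single estimate but the bookkeeping needed to make the \emph{abstract} limit surface $\Sigma$ well-defined and connected, and to verify that the graphical pieces patch together consistently across overlaps — i.e. that on $V_\ell \cap V_{\ell'}$ the two local constructions give the same limiting immersion, so that $\psi_k$ and $u_k$ are globally defined. This is handled by a standard but careful gluing: on overlaps the graphs are over overlapping disks in the same ambient region, the limits agree by uniqueness of $C^\infty_{loc}$ limits, and the transition maps are smooth; one then takes $\Sigma$ to be the resulting smooth manifold (connected because we only ever attach pieces reachable by bounded-length paths from $x^*$ in the $\Sigma_k$). Since this is precisely the content of the cited reference \cite{Cooper}, I would present the above as a sketch and defer the full patching details there. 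The CMC condition passes to the limit trivially once the $C^\infty_{loc}$ convergence is in hand, completing the proof.
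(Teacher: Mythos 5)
Your sketch is correct and follows the standard route: graphical localization at a uniform scale governed by the bound on $|h_k|$ and the local ambient geometry, Schauder estimates for the CMC graph equation, Arzel\`a--Ascoli plus a diagonal argument, and an abstract patching of the graph domains to produce $\Sigma$, with geodesic completeness of the limit following from the uniformly positive graph radius. The paper itself gives no proof of this lemma --- it states the result and refers the reader to \cite{Cooper} --- so there is no in-paper argument to compare against; your outline is consistent with what that reference does, and the only points you leave implicit but should perhaps record are that the constants $H_k$ themselves subconverge (since $|H_k| \leq (\dim \Sigma_k)\,\sup_k\sup_{\Sigma_k}|h_k|$) and that the graph radius stays uniformly positive on each compact piece of $\Sigma$ because intrinsic distance to the base point bounds ambient distance to $q = \lim_k \varphi_k(x_k^*)$, so only compactly many ambient geometries ever enter even though $(M,g)$ is merely complete.
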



\section{Rigidity of stable minimal cylinders} \label{app:rigidity} 

The result in the following proposition was established under the additional hypothesis that the Gauss curvature of the immersion be integrable in \cite[Theorem 3 (ii)]{Fischer-Colbrie-Schoen} and left as an open problem in \cite[Remark 2]{Fischer-Colbrie-Schoen}. Solutions have been proposed in \cite{Schoen-Yau:1982, Miyaoka:1993, Berard-Castillon, Reiris}.\footnote{It seems to us that the proof given in \cite{Schoen-Yau:1982} ``only" shows that there are no stable minimal immersions of the cylinder into $(M, g)$ if the ambient scalar curvature is positive; see the argument given in \cite[top of p. 216]{Schoen-Yau:1982} and also the sentence after the statement of their Theorem 2. In the proof given in \cite{Miyaoka:1993}, consider the integral over the ball $B_r$ at the bottom of page 292. In the evaluation of this integral using conformal invariance as suggested on the next page, we do not see how the geometry of the ``conformally changed" domain is controlled so that the ``order" of the test functions on the cylinder carries over.} Here we present a short proof based on a result by D. Fischer-Colbrie. 

\begin {prop} \label{prop:rigidcylinders} Let $(M, g)$ be a $3$-dimensional Riemannian manifold with non-negative scalar curvature $\scal \geq 0$. Let $\varphi : \Sigma \to M$ be a complete stable minimal immersion such that $\Sigma$ with the induced metric is conformal to the cylinder. Then the immersion is totally geodesic, the induced metric is flat, and $R \circ \varphi = 0$. 

\begin {proof}
According to \cite[Proposition 1] {Fischer-Colbrie:1985}, there is a smooth function $u > 0$ on $\Sigma$ such that 
\[
- \Delta u + Ku = \frac 12 (|h|^2 + \scal \circ \varphi) u 
\]
where $K$ and $\Delta$ are respectively the Gauss curvature and the non-positive definite Laplace-Beltrami operator of the induced metric $\varphi^* g$ on $\Sigma$ and where $|h|^2$ denotes the sum of squares of principal curvatures of the immersion. Theorem 1 in \cite{Fischer-Colbrie:1985} ensures that the conformally related metric $u^2 \,  \varphi^*g$ is complete. The Gauss curvature of this metric is given by 
\begin{equation} \label{aux:rigidity}
\frac{1}{u^2} \left( \frac{|\nabla u|^2}{u^2} + K - \frac{\Delta u}{u} \right) = \frac{1}{u^2} \left( \frac{|\nabla u|^2}{u^2} + \frac{|h|^2 + \scal \circ \varphi}{2}\right)
\end{equation}
where all geometric operations are with respect to the original induced metric. In particular, it is non-negative. It follows from the splitting theorem of J. Cheeger and D. Gromoll \cite{Cheeger-Gromoll} that $u^2 \, \varphi^* g$ is flat.\footnote{In fact, the relevant two-dimensional case of the splitting theorem is due to S. Cohn-Vossen.} The claim now follows from \eqref{aux:rigidity}.
\end {proof}
\end {prop}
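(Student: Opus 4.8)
The plan is to reduce the statement to a fact about complete metrics of non-negative Gauss curvature on the topological cylinder, by passing to a well-chosen conformal representative of the induced metric $\varphi^* g$. The tool for producing this representative is the work of D. Fischer-Colbrie on stability. Since $\varphi : \Sigma \to M$ is a stable minimal immersion, Proposition 1 of \cite{Fischer-Colbrie:1985} yields a positive function $u \in C^\infty(\Sigma)$ with
\[
- \Delta u + K u = \tfrac12 \left( |h|^2 + \scal \circ \varphi \right) u,
\]
where $K$, $\Delta$, and $|\cdot|$ refer to the induced metric $\varphi^* g$. The decisive property of such a $u$, established in Theorem 1 of \cite{Fischer-Colbrie:1985}, is that the conformally rescaled metric $\hat g = u^2 \, \varphi^* g$ is again complete. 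Computing its Gauss curvature by the usual two-dimensional conformal-change formula and substituting the equation for $u$ gives
\[
\hat K = \frac{1}{u^2}\left( \frac{|\nabla u|^2}{u^2} + K - \frac{\Delta u}{u} \right) = \frac{1}{u^2}\left( \frac{|\nabla u|^2}{u^2} + \frac{|h|^2 + \scal \circ \varphi}{2} \right) \geq 0,
\]
where the last inequality uses the hypothesis $\scal \geq 0$.

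Next I would bring in the topology. By assumption $(\Sigma, \varphi^* g)$ is conformal, hence diffeomorphic, to the cylinder, so $(\Sigma, \hat g)$ is a complete surface with two ends, Euler characteristic zero, and $\hat K \geq 0$. One can now conclude flatness of $\hat g$ in either of two ways. The quickest is the Cohn-Vossen inequality: the total curvature obeys $\int_\Sigma \hat K \leq 2\pi\, \chi(\Sigma) = 0$, which together with $\hat K \geq 0$ forces $\hat K \equiv 0$. Alternatively --- and this is the route I would take, as it is the one that generalizes --- one produces a line in $(\Sigma, \hat g)$: pick points diverging into the two distinct ends and join them by minimizing geodesics; each must cross a fixed compact set separating the ends, so after translating the parametrizations a subsequence converges to a complete minimizing geodesic. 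The splitting theorem of Cheeger and Gromoll (whose two-dimensional case goes back to Cohn-Vossen) then gives an isometric splitting $(\Sigma, \hat g) \cong \R \times N$; since $\Sigma$ is a cylinder, $N \cong S^1$ and $\hat g$ is flat.

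To finish, I would read the conclusions off the displayed identity for $\hat K$. Vanishing of $\hat K$ forces each non-negative summand on the right to vanish: $|\nabla u|^2 \equiv 0$, so $u$ is a positive constant; $|h|^2 \equiv 0$, so the immersion is totally geodesic; and $\scal \circ \varphi \equiv 0$. Since $u$ is constant, $\varphi^* g = u^{-2}\, \hat g$ is flat as well, which is the last remaining assertion.

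I expect the genuine difficulty to be concentrated in the completeness of $\hat g = u^2\, \varphi^* g$ --- this is exactly the content of Fischer-Colbrie's theorem and is what makes the cylinder's two-ended topology exploitable. The curvature computation is a pointwise identity, and the passage from ``complete, $\chi = 0$, $\hat K \geq 0$'' to ``flat'' is a standard consequence of non-negative curvature on complete surfaces.
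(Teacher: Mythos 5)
Your proposal is correct and follows essentially the same route as the paper: Fischer--Colbrie's Proposition 1 to produce the conformal factor $u$, her Theorem 1 for completeness of $u^2\varphi^*g$, the pointwise curvature identity, and the splitting theorem to deduce flatness. The Cohn--Vossen alternative you mention is indeed available (via Huber's version, which needs only $\hat K^- \in L^1$, automatic here), but as the paper's footnote already notes, the two-dimensional splitting theorem is itself due to Cohn--Vossen, so the two routes are essentially the same circle of ideas; your instinct to present the line-construction argument matches the paper's choice.
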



\begin {corollary} \label {cor:nocylinders} Let $(M, g)$ be an asymptotically flat Riemannian $3$-manifold with non-negative scalar curvature. Let $\varphi : \Sigma \to M$ be an unbounded complete stable minimal immersion. Then $\Sigma$ with the induced metric is conformal to the plane. 
\begin{proof} Else, by \cite[Theorem 3 (ii)]{Fischer-Colbrie-Schoen}, $\Sigma$ with the induced metric is conformal to the cylinder. By Proposition \ref{prop:rigidcylinders}, the induced metric is flat and the immersion is totally geodesic. This implies the existence of simple closed geodesics far out, contradicting asymptotic flatness. 
\end {proof}
\end {corollary}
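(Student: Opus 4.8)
The plan is to prove Corollary \ref{cor:nocylinders} by contradiction. Suppose $\Sigma$ with the induced metric is \emph{not} conformal to the plane. Since $\varphi$ is an unbounded complete stable minimal immersion into an asymptotically flat manifold, the underlying surface is noncompact; a complete noncompact stable minimal surface in a $3$-manifold is conformally either the plane or the cylinder by \cite[Theorem 3 (ii)]{Fischer-Colbrie-Schoen}. Hence the only alternative is that $\Sigma$ is conformal to the cylinder.

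Next I would invoke Proposition \ref{prop:rigidcylinders}: since the ambient scalar curvature of the asymptotically flat manifold $(M,g)$ is non-negative, a complete stable minimal immersion conformal to the cylinder must be totally geodesic with flat induced metric (and $R\circ\varphi=0$). The point of this step is that we pass from mere conformal type to genuine intrinsic flatness \emph{and} vanishing second fundamental form, which together pin down the geometry enough to contradict asymptotic flatness.

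Finally, I would exploit the intrinsic flatness of the cylinder. A complete flat metric on the cylinder $S^1\times\R$ is isometric to a standard flat cylinder of some circumference $L>0$ (or possibly a flat Möbius-type quotient, but in the orientable two-sided setting it is the round cylinder), so it contains a closed geodesic — in fact infinitely many, each a core circle $S^1\times\{t\}$, realized at every point of the $\R$-direction. Because the immersion is totally geodesic, these intrinsic geodesics map to \emph{geodesics of the ambient manifold} $(M,g)$ of bounded length $L$. Since $\varphi$ is unbounded, we can pick such core circles whose images escape every compact set, i.e. lie in the region $M\setminus B_\rho$ for arbitrarily large $\rho$. But in the chart at infinity the metric is $C^2$-close to the Euclidean metric on $\{|x|>\rho\}$, and $\R^3$ has no closed geodesics; more precisely, for $\rho$ large, any geodesic of $(M,g)$ contained in $\{|x|>\rho\}$ must be nearly a Euclidean straight line and in particular cannot close up with bounded length while staying in that region (one can see this, e.g., from the second variation / convexity of $|x|$ on the far region, since $S_\rho$ is convex for $\rho$ large, so a closed geodesic in $M\setminus B_\rho$ would have to be tangent to some $S_{\rho'}$ at its closest point to the origin and then the convexity of $S_{\rho'}$ forces it locally onto the wrong side). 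This contradiction completes the proof.

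The main obstacle is the last step: making rigorous the assertion that a totally geodesic intrinsically flat cylindrical immersion produces an honest closed ambient geodesic that lies far out, and that such a geodesic cannot exist in the asymptotically flat end. The cleanest route is to note that core circles of the flat cylinder, being intrinsic closed geodesics and images under a totally geodesic immersion, are closed geodesics of $(M,g)$ of a fixed finite length; choosing the point $x^*$ far out (possible since $\varphi$ is unbounded) and using that the flat structure is translation-invariant in the $\R$-factor, we get closed geodesics of bounded length meeting $S_\rho$ for every large $\rho$, hence eventually contained in $M\setminus B_\rho$, where the convexity of the spheres $S_{\rho'}$ (valid for $\rho'$ large by asymptotic flatness) rules them out. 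The rest of the argument is essentially a citation chain through Proposition \ref{prop:rigidcylinders} and \cite{Fischer-Colbrie-Schoen}.
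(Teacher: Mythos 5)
Your proof follows the same route as the paper: use Fischer--Colbrie--Schoen to reduce to the cylindrical conformal type, apply Proposition \ref{prop:rigidcylinders} to get intrinsic flatness and total geodesy, and then note that the resulting closed ambient geodesics of uniformly bounded length must lie far out (since $\varphi(\Sigma)$ is unbounded and each core circle has the same length $L$, hence image contained in a ball of radius $L$ about any of its points), contradicting asymptotic flatness --- this is exactly the paper's (much terser) final sentence, unpacked. The one slip is at the very end: you should work at the point of the closed geodesic $\gamma$ \emph{farthest} from the origin, not the closest. At the closest point, $\gamma$ is tangent to $S_{\rho'}$ from outside, and convexity of $S_{\rho'}$ only says that $\gamma$ stays locally outside $S_{\rho'}$ --- which is consistent with $\rho'$ being the minimum, so no contradiction follows. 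At the farthest point $\rho''$, $\gamma$ is tangent to $S_{\rho''}$ from inside, yet convexity forces it locally out of $B_{\rho''}$, contradicting maximality; equivalently, $f(t) = |\gamma(t)|^2$ has $f'' > 0$ along any geodesic confined to the far region, impossible for a nonconstant periodic function.
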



\section {Geometry of volume-preserving stable CMC immersions}

\begin {lemma} [\protect{\cite{Christodoulou-Yau:1988}}] \label{lem:ChristodoulouYau} Let $(M, g)$ be a Riemannian $3$-manifold and 
\[
\varphi : \Sigma \to M
\]
be a connected closed volume-preserving stable CMC immersion. Then 
\begin{align} \label{eqn:ChristodoulouYau}
\int_\Sigma H^2 + 2 |h|^2 + 2 (\scal \circ \varphi) \leq 64 \pi
\end{align}
where $R$ is the scalar curvature of $(M, g)$ and $H$ and $h$ denote the mean curvature and the second fundamental form of the immersion respectively. If $\Sigma$ is a sphere, then the bound on the right hand side may be lowered to $48 \pi$.
\end {lemma}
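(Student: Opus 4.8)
The plan is to combine the volume-preserving stability inequality with a Hersch-type balancing argument, in the spirit of the conformal volume estimates of Li and Yau. Write $g_\Sigma = \varphi^{*}g$ for the induced metric; since $\Sigma$ is closed and the immersion is two-sided, $(\Sigma, g_\Sigma)$ is a closed Riemannian surface, and I would first assume it is orientable, returning to the non-orientable case at the end. By uniformization, $g_\Sigma$ determines a conformal structure making $\Sigma$ a compact Riemann surface of some genus $\gamma \geq 0$, and the classical gonality bound (Meis; a consequence of Riemann--Roch) provides a non-constant conformal branched covering $F_{0} : \Sigma \to S^{2} \subset \R^{3}$ of degree $d \leq \lfloor (\gamma+3)/2 \rfloor$.

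Next I would apply Hersch's lemma to the pushforward under $F_{0}$ of the area measure of $g_\Sigma$ --- a non-atomic finite measure on $S^{2}$, since $F_{0}$ is a branched cover --- to produce a conformal automorphism $G$ of $S^{2}$ for which the components $F = (F_{1}, F_{2}, F_{3}) = G \circ F_{0}$ satisfy $\int_\Sigma F_{i} \, d\mu_{g_\Sigma} = 0$ for $i = 1, 2, 3$. Since $G$ is a M\"obius transformation, $F$ is again a conformal branched covering of degree $d$. The vanishing-mean normalization makes each $F_{i}$ admissible in the volume-preserving stability inequality recalled in Appendix \ref{sec:definitions}; summing the resulting three inequalities, using $\sum_{i} F_{i}^{2} \equiv 1$ and the pointwise conformality identity $\sum_{i} |\nabla F_{i}|^{2} \, d\mu_{g_\Sigma} = 2\, F^{*}(d\mu_{S^{2}})$, and evaluating $\int_\Sigma F^{*}(d\mu_{S^{2}}) = 4\pi d$, I would arrive at
\[
\int_\Sigma \left( |h|^{2} + \Ric(\nu,\nu) \right) \;\leq\; \int_\Sigma \sum_{i=1}^{3} |\nabla F_{i}|^{2} \;=\; 2\int_\Sigma F^{*}(d\mu_{S^{2}}) \;=\; 8\pi d .
\]

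To finish, I would use the Gauss equation \eqref{eqn:Gaussequation} to rewrite
\[
H^{2} + 2|h|^{2} + 2 (R \circ \varphi) \;=\; 4 \left( |h|^{2} + \Ric(\nu,\nu) \right) + 4K - H^{2},
\]
integrate, and combine the bound just obtained with the Gauss--Bonnet theorem $\int_\Sigma K = 2\pi\chi(\Sigma) = 4\pi(1-\gamma)$ and the trivial estimate $\int_\Sigma H^{2} \geq 0$ to get
\[
\int_\Sigma H^{2} + 2|h|^{2} + 2(R\circ\varphi) \;\leq\; 32\pi d + 8\pi\chi(\Sigma) \;\leq\; 16\pi(\gamma+3) + 16\pi(1-\gamma) \;=\; 64\pi .
\]
When $\Sigma$ is a sphere one has $\gamma = 0$, $d = 1$, and $\chi(\Sigma) = 2$, which sharpens the right-hand side to $48\pi$, as claimed.

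I expect the main obstacle to be the input on conformal maps: producing the branched cover $F_{0}$ of controlled degree --- which genuinely uses the complex-analytic structure furnished by uniformization --- and then re-balancing it by a conformal automorphism of $S^{2}$ without destroying conformality or altering the degree. The key subtlety is that one needs an \emph{upper} bound on $\int_\Sigma \sum_{i} |\nabla F_{i}|^{2}$, so the maps in play must be conformal, not merely of the right degree (for a general smooth map of degree $d$ the energy is bounded \emph{below} by $8\pi d$). The non-orientable case is not directly covered and would be handled separately, for instance by passing to the orientation double cover and symmetrizing the test functions so as to retain both the stability inequality and the vanishing-mean normalization, or by tracking the Euler characteristic directly; either way the stated bound is unaffected.
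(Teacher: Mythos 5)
Your proof is correct and is essentially the argument of the cited reference \cite{Christodoulou-Yau:1988} and its standard extension to higher genus: one balances a conformal branched cover of $S^2$ of controlled degree (bounded by the Brill--Noether/Meis estimate $d \leq \lfloor(\gamma+3)/2\rfloor$) via Hersch's lemma to produce mean-zero test functions for the volume-preserving stability inequality, obtains $\int_\Sigma (|h|^2 + \Ric(\nu,\nu)) \leq 8\pi d$ from the conformal energy identity, and then converts via the Gauss equation and Gauss--Bonnet, yielding exactly $64\pi$ in general and $48\pi$ for spheres. The paper takes this lemma from the literature without reproducing the argument, so there is nothing further to compare; the algebra in your Gauss-equation rearrangement and the final bookkeeping both check out.
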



\begin{lemma} [\protect{Cf. \cite[Theorem 7]{Ye:1996} and also \cite[Proposition 2.2]{stableCMC}}] \label{lem:curvatureestimates} 
Let $(M, g)$ be a homogeneously regular Riemannian $3$-manifold. Let $\alpha > 0$. There is a constant $\beta > 0$ with the following property. Let $\varphi : \Sigma \to M$ be a complete volume-preserving stable CMC immersion whose mean curvature satisfies $|H| \leq \alpha$. Then  
\[
\sup_{x \in \Sigma}  |h(x)| \leq \beta
\]
where $h$ denotes the second fundamental form of the immersion. 
\end{lemma}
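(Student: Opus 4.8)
\textbf{Proof proposal for Lemma \ref{lem:curvatureestimates}.}

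The plan is to argue by contradiction via a point-picking and blow-up scheme, exactly in the spirit of the curvature estimates for volume-preserving stable CMC surfaces in \cite[Theorem 7]{Ye:1996} and \cite[Proposition 2.2]{stableCMC}. Suppose the conclusion fails for a fixed $\alpha > 0$: then there exist complete volume-preserving stable CMC immersions $\varphi_k : \Sigma_k \to M$ with $|H_k| \leq \alpha$ and points $x_k \in \Sigma_k$ with $\lambda_k := |h_k(x_k)| \to \infty$. Using the homogeneous regularity of $(M,g)$ — which provides a uniform lower bound on the injectivity radius and two-sided curvature bounds on large enough coordinate balls — together with a standard point-selection argument (Schoen's trick, cf. the rescaling lemma), I would replace $x_k$ by nearby points, still called $x_k$, so that $|h_k| \leq 2\lambda_k$ on the intrinsic ball of radius $\lambda_k^{-1}$ about $x_k$ in $\Sigma_k$, while $|h_k(x_k)| \geq \lambda_k$.

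Next I would rescale. Pull back the metric $g$ to a fixed coordinate ball around the (subsequential) limit point of $\varphi_k(x_k)$ in $M$, rescale by $\lambda_k$, and recenter so that $\varphi_k(x_k)$ goes to the origin; the rescaled ambient metrics converge in $C^\infty_{loc}$ to the flat metric on $\R^3$ because $(M,g)$ is homogeneously regular. The rescaled immersions have second fundamental form bounded by $2$ on intrinsic balls of radius $1$ that exhaust the domain, mean curvature $\lambda_k^{-1} H_k \to 0$, and they remain volume-preserving stable — indeed the volume-preserving stability inequality is scale-invariant after the obvious rescaling of the test functions, and in the limit the mean-value constraint $\int u = 0$ can be dispensed with by a standard truncation argument (any compactly supported $u$ can be corrected by a function supported far away to have zero average, with negligible cost). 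Applying the compactness result for pointed immersions (Lemma \ref{lem:limitsofimmersions}), a subsequence converges to a complete minimal immersion $\varphi_\infty : \Sigma_\infty \to \R^3$ that is \emph{stable}, with $|h_\infty(x^*)| = 1 > 0$, hence non-planar.

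But by the theorem of Fischer-Colbrie and Schoen (and its independent proofs by do Carmo--Peng and Pogorelov), the only complete two-sided stable minimal surfaces in $\R^3$ are flat planes — a contradiction. This closes the argument. The main obstacle, and the only point requiring genuine care, is the transfer of stability through the rescaling and limit: one must verify that the loss of the mean-value constraint is harmless, and that $\varphi_\infty$ is non-trivial, i.e. that the blow-up does not degenerate (this is why the point-selection step guaranteeing $|h_k| \leq 2\lambda_k$ on a ball of definite rescaled radius is essential — it prevents the limit from collapsing and forces $|h_\infty(x^*)| = 1$). All remaining ingredients — homogeneous regularity giving $C^\infty_{loc}$ convergence of the rescaled ambient metrics, and the compactness of pointed immersions with bounded second fundamental form — are available from the earlier parts of the paper or are entirely standard.
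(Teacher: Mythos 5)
The paper does not prove this lemma; it cites it from \cite[Theorem 7]{Ye:1996} and \cite[Proposition 2.2]{stableCMC}, so there is no in-paper proof to compare against. Your blow-up argument is exactly the one used in those references, and its outline is sound: point-picking to saturate the curvature bound, rescaling so the ambient geometry flattens (this uses only homogeneous regularity, which is enough even though the images $\varphi_k(x_k)$ may escape to infinity in $M$ --- you do not actually need them to subconverge to a point of $M$, only uniform control on fixed-size geodesic balls around them), and then classification of the blow-up limit.

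The one step that deserves genuine scrutiny is precisely the one you flag, the removal of the mean-value constraint in the limit. Your truncation idea works, but only because the blow-up limit $\Sigma_\infty$ is a complete, non-compact minimal surface with bounded $|h|$, hence has infinite area; one then spreads the corrector $v$ over far-away intrinsic annuli $B_{2R}\setminus B_R$ whose areas are unbounded along a subsequence, which makes $\int |\nabla v|^2$ and $\int |h|^2 v^2$ both tend to zero while $\int v$ is held fixed. Without the infinite-area observation the ``negligible cost'' claim is unjustified; with it, the argument is complete. Alternatively, and more cleanly, one can skip the weak-to-strong stability step entirely and invoke da Silveira's theorem (or L\'opez--Ros), which classifies complete non-compact volume-preserving stable CMC surfaces in $\R^3$ directly as planes; this is what makes the argument in \cite[Proposition 2.2]{stableCMC} go through without fuss. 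Either route closes your proof.
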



\begin{lemma} [\protect{\cite[Proposition 2.3]{stableCMC}}] \label{lem:curvatureestimates-with-decay} 
Let $(M, g)$ be an asymptotically flat Riemannian $3$-manifold. Let $C \subset M$ be compact and $\alpha > 0$. There is a constant $\beta > 0$ with the following property. Let $\varphi : \Sigma \to M$ be a complete volume-preserving stable CMC immersion whose mean curvature satisfies $|H| \leq \alpha$ and such that 
\[
\varphi (\Sigma) \cap C \neq \emptyset.
\]
Let $h$ denote the second fundamental form of the immersion. Then  
\[
\sup_{x \in \Sigma} |\varphi (x)| |h(x)| \leq \beta.
\]
\end{lemma}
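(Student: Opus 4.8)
The plan is a contradiction argument based on a blow-up at the scale of the second fundamental form, exploiting that deep in the chart at infinity the ambient geometry is nearly Euclidean, together with the classification of complete stable CMC surfaces in $\R^3$.

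\textbf{Setup.} Suppose no such $\beta$ exists. Then there are volume-preserving stable CMC immersions $\varphi_k : \Sigma_k \to M$ with $|H_k| \leq \alpha$ and $\varphi_k(\Sigma_k) \cap C \neq \emptyset$, together with points $x_k \in \Sigma_k$ such that, writing $\rho_k := |\varphi_k(x_k)|$, one has $\rho_k \, |h_k(x_k)| \to \infty$. By Lemma \ref{lem:curvatureestimates} there is $\beta_0 > 0$ with $\sup_{\Sigma_k} |h_k| \leq \beta_0$ for all $k$; in particular $|h_k(x_k)| \leq \beta_0$, so $\rho_k \to \infty$ and the base points leave every bounded subset of $M$. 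Passing to a subsequence, we separate two cases. If $\area(\Sigma_k)$ stays bounded, then --- since $|h_k| \leq \beta_0$ yields a uniform lower bound for the area of small intrinsic balls in $\Sigma_k$ --- the intrinsic diameter of $\Sigma_k$ stays bounded as well; but $\Sigma_k$ is connected, meets the fixed compact set $C$, and contains $\varphi_k(x_k) \in S_{\rho_k}$ with $\rho_k \to \infty$, which is impossible. Thus we may assume $\area(\Sigma_k) \to \infty$.

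\textbf{Blow-up at a near-maximal curvature point.} A standard point-selection argument of the type introduced by R.\ Schoen lets us replace $x_k$ by points $\bar x_k \in \Sigma_k$ with $|\varphi_k(\bar x_k)| = (1+o(1))\rho_k$, with $|\varphi_k(\bar x_k)| \, |h_k(\bar x_k)| \to \infty$, and with $|h_k(y)| \leq 2 \, |h_k(\bar x_k)|$ for all $y \in \Sigma_k$ with $\dist_{\Sigma_k}(y, \bar x_k) \leq A_k / |h_k(\bar x_k)|$, where $A_k \to \infty$. Set $\tau_k := |h_k(\bar x_k)|^{-1} \in [\beta_0^{-1}, \infty)$, so that $\rho_k / \tau_k \to \infty$. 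In the chart at infinity, translate $\varphi_k(\bar x_k)$ to the origin and dilate by $\tau_k^{-1}$. Since the points of $M$ within rescaled distance $R$ of $\varphi_k(\bar x_k)$ have Euclidean norm at least $(1+o(1))\rho_k - R\tau_k \to \infty$, asymptotic flatness gives that the rescaled ambient metrics converge in $C^\infty_{\mathrm{loc}}(\R^3)$ to the flat metric. The rescaled immersions have second fundamental form bounded by $2$ on each fixed ball (for $k$ large), value $1$ at the base point, constant mean curvature $\tau_k H_k$ (bounded, since it is controlled by $\sqrt{2}$ times the supremum of the rescaled $|h|$), and do not cross themselves. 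By Lemma \ref{lem:limitsofimmersions}, after passing to a subsequence they converge to a complete CMC immersion $\hat\varphi : \hat\Sigma \to \R^3$ that does not cross itself and satisfies $|\hat h| \leq 2$, $|\hat h(\hat x^*)| = 1$, and $\hat H = \mathrm{const}$.

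\textbf{Stability of the limit.} I claim $\hat\varphi$ is volume-preserving stable in $\R^3$. Let $u \in C^\infty_c(\hat\Sigma)$ with $\int_{\hat\Sigma} u = 0$, and transplant it to $u_k$ supported in the corresponding part of $\Sigma_k$; its integral $\epsilon_k$ over $\Sigma_k$ (in the rescaled metric) tends to $0$. Since $\Sigma_k$ is connected with $\area(\Sigma_k) \to \infty$, we may choose a corrector $w_k$ supported far from $u_k$ and spread over a portion of $\Sigma_k$ of diverging area, with $\int_{\Sigma_k} w_k = -\epsilon_k$ and with $\int_{\Sigma_k} |\nabla w_k|^2 + \int_{\Sigma_k} (|h_k|^2 + \Ric(\nu,\nu)) w_k^2 \to 0$. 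Applying the volume-preserving stability inequality for $\Sigma_k$ to $u_k + w_k$ and letting $k \to \infty$, using that $\R^3$ is flat, yields $\int_{\hat\Sigma} |\nabla u|^2 \geq \int_{\hat\Sigma} |\hat h|^2 u^2 \geq 0$, which is the claim.

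\textbf{Conclusion, and the main difficulty.} By the classification of complete two-sided volume-preserving stable CMC surfaces in $\R^3$ --- flat planes and round spheres --- the limit $\hat\varphi$ is one of these two. If it is a plane, then $\hat h \equiv 0$, contradicting $|\hat h(\hat x^*)| = 1$. If it is a round sphere, then $\hat\Sigma$ is compact, so Lemma \ref{lem:limitsofimmersions} forces the connected surface $\Sigma_k$ to be, for $k$ large, globally a small normal graph over a round sphere of $g$-radius $O(\tau_k)$; hence $\mathrm{diam}_g(\Sigma_k) \leq C\tau_k$, while $\Sigma_k$ meets $C$ and contains $\varphi_k(\bar x_k)$ at distance $(1+o(1))\rho_k$, forcing $\tau_k \gtrsim \rho_k$, i.e.\ $\rho_k \, |h_k(\bar x_k)| \lesssim 1$, against $\rho_k \, |h_k(\bar x_k)| \to \infty$. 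Either way we reach a contradiction, proving the lemma. I expect the main obstacle to be the stability transplant of the third step: because the volume-preserving stability inequality only tests mean-zero functions, the correctors $w_k$ must be built with asymptotically vanishing Rayleigh quotient, which rests on the (elementary but not entirely formal) lower area bound $\area(\Sigma_k) \to \infty$; a secondary technical point is running the Schoen point-selection when $\varphi_k$ fails to be proper, which is handled by localizing the iteration to intrinsic balls.
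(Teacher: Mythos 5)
The paper does not prove this lemma itself; it is cited verbatim from \cite[Proposition 2.3]{stableCMC}, so there is no in-text proof to compare against. Judged on its own terms, your blow-up strategy is the natural one and its skeleton is sound: use the uniform bound $|h_k|\leq\beta_0$ from Lemma~\ref{lem:curvatureestimates} to force $\rho_k\to\infty$, point-pick, rescale by $\tau_k=|h_k(\bar x_k)|^{-1}$ so the ambient metric goes flat, classify the limit, and contradict $|\hat h(\hat x^*)|=1$ or the fact that $\Sigma_k$ must reach from $C$ to $\varphi_k(\bar x_k)$.

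The place where your write-up has a genuine gap is the stability transplant, and the reason you give for it is not the right one. You justify the corrector $w_k$ by ``$\area(\Sigma_k)\to\infty$,'' but divergent area does not help: the Rayleigh quotient of $w_k$ in the rescaled picture involves $\int(|h_k|^2+\Ric(\nu,\nu))w_k^2$, and the rescaled potential $\tau_k^2|h_k|^2$ and $\tau_k^2\Ric$ are bounded (in fact small for Ricci) \emph{only} inside the point-selection ball of rescaled radius $A_k$ about $\bar x_k$; elsewhere they may be of order $\tau_k^2$. If the corrector is ``spread over a portion of $\Sigma_k$ of diverging area'' far from $\bar x_k$, there is no control on the potential term and the argument collapses. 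The fix is to place $w_k$ in an annulus $\{R_0< r<2R_0\}$ inside the good ball and disjoint from $\operatorname{supp} u$, where the rescaled second fundamental form is $\leq 2$, the rescaled Ricci is $o(1)$, and the rescaled area is bounded below by the curvature bound; then $c_k\sim\epsilon_k\to 0$ makes all three terms vanish, and one actually obtains \emph{strong} stability of the limit, so that the logarithmic cut-off forces $\hat h\equiv 0$. But note this placement is only available when $\hat\Sigma$ is non-compact; when $\hat\Sigma$ is compact the support of $u$ can exhaust the good ball and no corrector exists, so your claim ``$\hat\varphi$ is volume-preserving stable'' is not established in that case. Fortunately you do not need it there: as you correctly argue in the last paragraph, compactness of $\hat\Sigma$ alone (via Lemma~\ref{lem:limitsofimmersions} and connectedness) forces $\diam_g(\Sigma_k)\lesssim\tau_k$, which already contradicts $\rho_k/\tau_k\to\infty$. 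You should reorganize so that the compact and non-compact cases are split \emph{before} the stability transplant is invoked, and state the transplant only for non-compact limits, with the corrector explicitly localized in the good ball.

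Two smaller remarks: (i) the limit immersion need not ``not cross itself'' since the $\varphi_k$ are merely immersions, but this is harmless because the classifications you cite (Barbosa--do Carmo for closed, da Silveira for complete non-compact) are stated for immersions; (ii) the appeal to Lemma~\ref{lem:curvatureestimates} requires that asymptotically flat manifolds be homogeneously regular, which is true under the decay assumptions of Appendix~\ref{sec:definitions} but worth saying.
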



\section{Asymptotic behavior of stable minimal immersions}

In this appendix, we investigate the qualitative behavior of the part of a stable minimal immersion that extends into the end of an asymptotically flat Riemannian $3$-manifold. 

The following result due to R. Gulliver and H.B. Lawson \cite{Gulliver-Lawson:1986} extends the classical result of D. Fischer-Colbrie and R. Schoen \cite{Fischer-Colbrie-Schoen}, M. do Carmo and C.K. Peng \cite{doCarmoPeng}, as well as A. V. Pogorelov \cite{Pogorelov} to the possible inclusion of an isolated singularity. 

\begin{lemma} \label{lem:Bernstein} 
Let 
\[
\varphi : \Sigma \to \mathbb{R}^3 \setminus \{0\}
\]
be a connected stable minimal immersion that is complete\footnote{In other words, every sequence $\{x_i\}_{i=1}^\infty \subset \Sigma$ that is Cauchy with respect to the induced Riemannian distance either has a limit in $\Sigma$ or is such that $|\varphi (x_i)| \to 0$.} away from the origin. Then $\varphi (\Sigma)$ is a plane.
\end{lemma}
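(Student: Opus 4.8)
The statement to prove is Lemma \ref{lem:Bernstein}: a connected stable minimal immersion $\varphi : \Sigma \to \R^3 \setminus \{0\}$ that is complete away from the origin must have image a plane. The plan is to first dispose of the isolated singularity, reducing to the classical Bernstein-type rigidity for complete stable minimal immersions in $\R^3$, and then invoke the Fischer-Colbrie--Schoen / do Carmo--Peng / Pogorelov theorem. Concretely, I would consider the possibility that $0 \in \overline{\varphi(\Sigma)}$. If not, then $\varphi$ is already a complete stable minimal immersion into $\R^3$ (the induced metric is complete in the ordinary sense, since Cauchy sequences cannot escape to the puncture), and the cited theorem immediately gives that $\varphi(\Sigma)$ is a plane. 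So the work is entirely in the case where the immersion accumulates at the origin.

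To handle the singular case, the idea is to show that the origin is a \emph{removable} singularity, i.e.\ $\varphi$ extends (after possibly adjusting $\Sigma$ topologically near the end approaching $0$) to a complete stable minimal immersion into all of $\R^3$. First I would establish a curvature estimate near the origin: by the interior curvature estimates for stable minimal surfaces of Schoen (or Schoen--Simon--Yau in this two-sided setting), one has $|h(x)| \leq C \, \dist_{\R^3}(\varphi(x), 0)^{-1}$ for $x$ with $\varphi(x)$ near $0$, because around each such point there is an ambient ball of radius comparable to $|\varphi(x)|$ on which the immersion restricts to a complete stable minimal surface. Rescaling $\varphi$ by $\lambda^{-1}$ and letting $\lambda \to 0$ along a sequence, the curvature bound and a monotonicity/area bound argument produce a blow-down which is a complete stable minimal immersion into $\R^3 \setminus \{0\}$, hence (inductively, or by a tangent-cone argument) a union of planes through the origin; combined with the connectedness and the fact that the original surface does not cross itself one shows the density at $0$ is $1$, so the singularity is removable and $\varphi$ extends smoothly across $0$ to a complete stable minimal immersion of $\R^3$. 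Then the classical theorem applies.

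The cleanest packaging, and the one I would actually write, is to quote the extension theorem of Gulliver--Lawson \cite{Gulliver-Lawson:1986} directly: they prove precisely that a stable minimal hypersurface in a punctured ball with an isolated singularity, complete away from the singular point, extends across it (this is the content attributed to them in the sentence preceding the lemma). Given that input, the proof is just: apply the Gulliver--Lawson removability result at the origin to conclude $\varphi$ extends to a complete stable minimal immersion $\varphi : \Sigma' \to \R^3$ with $\Sigma'$ connected; then apply \cite{Fischer-Colbrie-Schoen}, \cite{doCarmoPeng}, \cite{Pogorelov} to conclude the image is a plane; and finally note the original $\varphi(\Sigma)$ is an open dense subset of this plane, hence equals it (a plane minus a point is not a complete minimal surface away from that point unless—in fact $\varphi(\Sigma)$ must be the whole plane since removing the origin from the extended domain and re-immersing recovers $\varphi$, whose image therefore omits at most the single point $0$, but a stable—actually the image is exactly the plane since the extension is onto).

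\textbf{Main obstacle.} The only genuinely delicate point is the removability of the isolated singularity — verifying that the immersion (as opposed to an embedded area-minimizing hypersurface, where such results are classical and due to, e.g., Simon) cannot spiral wildly into the origin with unbounded multiplicity. This is exactly why the statement is phrased as a quotation of Gulliver--Lawson rather than proved from scratch; the substance of the lemma is their extension theorem plus the classical planarity theorem, and the role of the ``complete away from the origin'' hypothesis is precisely to make the Gulliver--Lawson machinery applicable. I would therefore keep the proof short: cite the extension result, cite the Bernstein-type rigidity, and observe that the image of the (restriction of the) extended immersion is the full plane.
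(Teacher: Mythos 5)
The paper gives no proof of this lemma: it is stated as a citation, with the preceding sentence explicitly attributing the result to Gulliver--Lawson \cite{Gulliver-Lawson:1986} as the extension of the classical Bernstein-type theorem of Fischer-Colbrie--Schoen, do Carmo--Peng, and Pogorelov to an isolated puncture. Your ``cleanest packaging'' --- invoke Gulliver--Lawson for removability of the singularity at the origin, then the classical planarity theorem --- is exactly the route the paper intends, so your proposal is correct and matches it. Two small cautions on the sketch you say you would \emph{not} ultimately write: your direct blow-down argument appeals to ``the fact that the original surface does not cross itself,'' but that is not a hypothesis of this lemma (only stability and completeness away from $0$ are assumed), so the density-one step would need a different justification and the ``inductive'' appeal to the same lemma for the blow-down is circular in dimension two; and the closing parenthetical about $\varphi(\Sigma)$ being the \emph{entire} plane is not quite right --- if the extended surface is a plane through $0$, then $\varphi(\Sigma)$ omits that one point, and a punctured plane is perfectly complete away from the puncture in the paper's sense, so the usable content of the conclusion (and all that is needed downstream, e.g.\ in Proposition \ref{prop:blow-down-is-plane}) is simply that the image is planar.
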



In conjunction with Lemma \ref{lem:curvatureestimates-with-decay}, we obtain the following 

\begin{lemma}\label{lem:improved-decay}
Let $(M,g)$ be an asymptotically flat Riemannian $3$-manifold. Let
\[
\varphi : \Sigma \to M
\]
be a complete stable minimal immersion. Then
\[
|\varphi (x)| |h(x)| = o(1)
\]
as $|\varphi (x)| \to \infty$. 
\end{lemma}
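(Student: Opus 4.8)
The plan is to derive this decay estimate by a blow-up/contradiction argument that combines the scale-invariant bound of Lemma~\ref{lem:curvatureestimates-with-decay} with the classification of entire stable minimal immersions in $\mathbb{R}^3$ punctured at a point, Lemma~\ref{lem:Bernstein}. First I would argue by contradiction: suppose there is $\varepsilon > 0$ and a sequence $x_k \in \Sigma$ with $|\varphi(x_k)| \to \infty$ but $|\varphi(x_k)|\,|h(x_k)| \geq \varepsilon$. Set $r_k = |\varphi(x_k)| \to \infty$ and work in the chart at infinity, rescaling the ambient metric by $r_k^{-2}$. Since $(M,g)$ is asymptotically flat, the rescaled metrics converge smoothly on compact subsets of $\mathbb{R}^3 \setminus \{0\}$ to the flat Euclidean metric; the rescaled immersions remain stable minimal immersions (stability and minimality are scale-invariant).

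The second step is to extract a limit. The key point is that Lemma~\ref{lem:curvatureestimates-with-decay}, applied with a fixed compact set $C$, gives $\sup_{x} |\varphi(x)|\,|h(x)| \leq \beta$; after rescaling by $r_k^{-1}$, this becomes a uniform bound on the second fundamental forms of the rescaled immersions on the region $\{1/R \leq |\cdot| \leq R\}$ for each fixed $R$ (with constant deteriorating as one approaches the origin, but locally uniform away from $0$). Taking base points $\varphi(x_k)/r_k$, which lie on the unit sphere, I would pass to a subsequence and invoke the compactness result, Lemma~\ref{lem:limitsofimmersions} (in its version adapted to the ambient rescaling, as used in Proposition~\ref{prop:blow-down-is-plane} and Proposition~\ref{prop:sheeting}), to obtain a complete stable minimal immersion $\varphi_\infty : \Sigma_\infty \to \mathbb{R}^3 \setminus \{0\}$, complete away from the origin, with base point on the unit sphere. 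By Lemma~\ref{lem:Bernstein}, $\varphi_\infty(\Sigma_\infty)$ is a plane, so its second fundamental form vanishes identically. But $C^\infty_{loc}$ convergence of the immersions forces the rescaled second fundamental forms at the base points to converge to that of $\varphi_\infty$ at the limit base point, which is $0$; this contradicts $|\varphi(x_k)|\,|h(x_k)| \geq \varepsilon$, i.e. the fact that the rescaled second fundamental form has norm $\geq \varepsilon$ at each base point.

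The main obstacle I anticipate is the behavior near the puncture at the origin: the curvature bound from Lemma~\ref{lem:curvatureestimates-with-decay} degenerates as one approaches the origin in the rescaled picture, so one must be careful that the limit immersion is genuinely complete away from $0$ and that no mass is lost into the origin under the limiting procedure. This is exactly the scenario Lemma~\ref{lem:Bernstein} is designed to handle (it permits an isolated singularity), so the resolution is to set up the limit carefully on the exhaustion $\{1/R \leq |\cdot| \leq R\}$, $R \to \infty$, using a diagonal argument, and to note that the completeness-away-from-origin of each $\varphi$ (being a complete immersion into $M$) passes to the limit. A minor secondary point is to confirm that the immersion property and the no-self-crossing/injectivity structure are not needed here — only stability and minimality of the limit are used, together with completeness away from the origin — so the hypotheses of Lemma~\ref{lem:Bernstein} are met. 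Once the contradiction is in place, the proof is complete.
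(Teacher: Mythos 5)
Your blow-down argument is exactly the one the paper intends: it states Lemma~\ref{lem:improved-decay} as an immediate consequence of the scale-invariant curvature bound (Lemma~\ref{lem:curvatureestimates-with-decay}) and the Gulliver--Lawson classification (Lemma~\ref{lem:Bernstein}), leaving implicit the rescaling/compactness argument you have written out. Your treatment of the puncture and the passage to the pointed limit correctly supplies the missing details, so the proof is right and follows the paper's approach.
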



The following lemma shows that complete stable minimal immersions in asymptotically flat $3$-manifolds have transverse intersection with all sufficiently large coordinate spheres. It is based on the ideas of B. White \cite[p.\ 251]{White:curvature-est} who observed a similar result for surfaces that are properly embedded in $B_{1}(0)\setminus\{0\}$ (see also the work of W.\ Meeks, J.\ P\'erez, and A.\ Ros \cite[Lemma 4.1]{Meeks-Perez-Ros:2013}). The generality we require causes a complication that is not present in \cite{White:curvature-est,Meeks-Perez-Ros:2013}. Specifically, we need to address the failure of the Palais-Smale condition (due to lack of properness) in the proof of the mountain pass lemma. Our reasoning here may be of some independent interest. 

\begin{lemma} \label{lem:transverse-int-sph}
Let $(M,g)$ be an asymptotically flat Riemannian $3$-manifold. Let $\varphi : \Sigma \to M$ be a complete stable minimal immersion. There is $r_0>1$ so that the immersion is transverse to the centered coordinate sphere $S_r$ for every $r\geq r_{0}$.
\end{lemma}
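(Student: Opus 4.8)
\textbf{Proof proposal for Lemma \ref{lem:transverse-int-sph}.}

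The plan is to argue by contradiction and extract, via a mountain-pass scheme, a nontrivial Jacobi-type obstruction at a radius of non-transversality, contradicting stability in the same way that the curvature decay from Lemma \ref{lem:improved-decay} rules out such obstructions far out in the end. First I would suppose that there is a sequence $r_k \to \infty$ and points $x_k \in \varphi^{-1}(S_{r_k})$ at which the immersion fails to be transverse to $S_{r_k}$, i.e. $\nu(x_k)$ is tangent to $S_{r_k}$ (equivalently $\varphi(x_k) \cdot \nu(x_k) = o(r_k)$ after rescaling in the chart at infinity). Rescaling the chart at infinity by $r_k^{-1}$ and using the uniform curvature bound of Lemma \ref{lem:curvatureestimates} together with Lemma \ref{lem:improved-decay}, the pointed immersions with base points $x_k$ subconverge to a complete stable minimal immersion into $\R^3 \setminus \{0\}$ (the rescaled metrics converge to the flat metric away from the origin; the origin is the only point that can escape to the boundary of the rescaled region). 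By Lemma \ref{lem:Bernstein} the limit is a plane, and by construction this plane is tangent to the unit sphere $S_1(0)$ at a limit point $x_\infty$. A plane tangent to $S_1(0)$ lies (weakly) outside $\bar B_1(0)$, touching only at $x_\infty$.

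Next I would convert this tangential contact into a quantitative statement along the approximating surfaces. Near $x_\infty$, the rescaled surfaces $\Sigma_k$ are multi-sheeted graphs over (a disk in) the tangent plane $\Pi$, and the function $r \mapsto$ (signed distance of $\varphi$ from the origin) restricted to a sheet has, in the limit, an interior minimum equal to $1$ attained at $x_\infty$. The heart of the argument is then a mountain-pass construction: on the region of $\Sigma_k$ lying in a fixed small ball around $\varphi(x_k)$, the two radii $r = r_k$ give a ``slab'' whose two ends correspond to reaching the barrier, and the failure of transversality manifests as the barrier $S_{r_k}$ failing to be a genuine obstacle, so that one can find a mountain-pass critical point of the area (or of a suitable area functional penalized to stay in the slab) whose associated Jacobi field is a nonzero compactly supported-ish solution, in contradiction with stability. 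This is the step where, as the authors flag, the Palais--Smale condition fails because the ambient manifold is not compact and the relevant minimization/min-max takes place in a non-proper setting; the remedy I would adopt is to localize: work on $\varphi^{-1}(B_R)$ for large but fixed $R$ (relative to the relevant scale), use the barriers coming from convexity of large coordinate spheres (every $S_\rho$ with $\rho$ large is mean-convex, cf. the proof of Proposition \ref{prop:unboundedness}) to confine the min-max sweepout, and then pass $k \to \infty$ so that the would-be critical point produces, after the usual rescaling as on p.\ 333 of \cite{Simon:1987}, a positive Jacobi function on an unbounded piece of the limiting plane with a growth rate incompatible with the stability inequality \eqref{eqn:stabilityminimal} once the curvature decay of Lemma \ref{lem:improved-decay} is taken into account.

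The main obstacle, as indicated above, is precisely the failure of properness: one cannot directly invoke the mountain-pass lemma because sweepouts may lose area into the end. I expect the technical core of the proof to be setting up a modified functional or a constrained sweepout for which a Palais--Smale-type condition does hold on the relevant sublevels --- for instance by adding a small convexity-type penalization supported near $\partial B_R$, or by exploiting the fact (from Proposition \ref{prop:blow-down-is-plane} / Lemma \ref{lem:equator-conc}, applied to the blow-down) that outside a large compact set the surface already looks like a union of nearly-flat graphs, so that any area-critical configuration of bounded area and bounded below in the slab must stay in a compact region. Once compactness of the sweepout is secured, the contradiction with stability is routine and parallels the Schoen--Yau mechanism used repeatedly in this paper: a plane sitting tangentially outside a sphere, arising as a stable minimal limit, is too rigid to exist.
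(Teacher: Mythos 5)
Your first paragraph is on the right track geometrically, but the remainder of your proposal diverges substantially from the actual argument and, more importantly, leaves the crucial step unproved. In the paper, the mountain-pass argument is applied not to the area functional, nor to any penalised variant of it, but directly to the scalar function $f : \Sigma \to \R$, $f(x) = |\varphi(x)|^2$. The key analytic input — which your proposal does not identify — is the strict convexity estimate
\[
(\partial^2_\Sigma f)(x)(v,v) \geq |v|^2
\]
holding whenever $|\varphi(x)|$ is large, which follows from the curvature decay $|\varphi(x)||h_\delta(x)| = o(1)$ (Lemma \ref{lem:improved-decay} plus a comparison of $h_\delta$ and $h_g$). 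This convexity forces every critical point of $f$ far out to be a \emph{strict local minimum}. Failure of transversality to some $S_r$ with $r$ large is exactly the existence of such a critical point $y$; the mountain-pass level $\alpha$ for paths from $\varphi^{-1}(S_r)$ to $y$ then strictly exceeds $f(y)$, and the usual deformation lemma (adapted, as you correctly anticipate, to account for the failure of Palais--Smale on the non-compact surface) produces near-critical points of $f$ at level $\alpha$ which — thanks again to the convexity estimate and the curvature bounds — must be near strict local minima, contradicting the mountain-pass geometry. No blow-down, no Gulliver--Lawson rigidity, no Jacobi field, and in particular no invocation of stability is needed at this stage; the contradiction is entirely a statement about a convex function on a Riemannian surface.

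Your proposal instead tries to obtain the contradiction by (a) blowing down at a non-transverse point to get a plane tangent to $S_1(0)$, and then (b) running a min-max for area (or a penalised area functional) on the approximating surfaces to produce an unstable critical point or a Jacobi field violating \eqref{eqn:stabilityminimal}. Step (a) is fine as far as it goes, but it does not by itself produce a contradiction: a plane tangent to $S_1(0)$ is a perfectly good stable minimal surface in $\R^3 \setminus \{0\}$, and the unit sphere in the blow-down is merely a bookkeeping device, not a geometric obstacle. Step (b) is where the gap lies. It is never explained why the tangency should create a min-max sweepout for area with a non-trivial gap, what the admissible sweepouts are, why the resulting critical point would be unstable rather than a local minimum, or how such a critical point (which in any case would live on a \emph{different} surface) yields a Jacobi field on $\Sigma$ contradicting stability. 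The hedging language you use (``I expect the technical core...'', ``the remedy I would adopt...'') reflects that this part of the argument is not in fact carried out. Also note that Proposition \ref{prop:blow-down-is-plane} and Lemma \ref{lem:normalbecomesradial}, which you lean on in your third paragraph to control the surface at infinity, are stated in the paper as \emph{consequences} of the transversality lemma, so invoking them here would be circular. The upshot: you identified the right slogan (mountain pass with failure of Palais--Smale), but the paper's realisation of it is both more elementary and essentially different — it operates on $f = |\varphi|^2$ with convexity, not on the area functional with stability.
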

\begin{proof} 
We work in the coordinate chart 
\[
M \setminus \overline {B_1} \cong \mathbb{R}^3 \setminus \overline{ B_1(0)}
\]
at infinity. First, recall the elementary estimate 
\[
|\varphi(x)|^2 |h_\delta (x) - h_g (x)| \leq c\left( |\varphi(x)||h_{g} (x)| + 1\right)
\]
on $\varphi^{-1}(M \setminus \overline {B_1})$ which holds for a constant $c > 0$ that is independent of the immersion. Here we use $h_{g}$ and $h_{\delta}$ to denote the scalar-valued second fundamental forms with respect to the ambient metrics $g$ and $\delta$ respectively. Using also Lemma \ref{lem:improved-decay} we obtain that 
\[
|\varphi(x)||h_{\delta} (x)| =o(1)
\]
as $|\varphi(x)| \to \infty$. Let $f : \Sigma \to \R$ denote the function given by 
\[
x \mapsto |\varphi(x)|^2.
\]
Given $x \in \varphi^{-1} (M \setminus \overline {B_1})$ and $v \in T_{x}\Sigma$ we have that 
\[
\frac{1}{2}(\partial^2_{\Sigma} f)(x)(v,v) = |v|^{2} - h_{\delta} (x)(v,v) (\nu_\delta \cdot \varphi(x))
\]
where $\partial^2_\Sigma f$ and $\nu_\delta$ are respectively the Hessian of $f$ and the normal of the immersion, both take with respect to metric induced on $\Sigma$ by the ambient Euclidean metric. We obtain the convexity estimate 
\begin{equation}\label{eq:strict-convex-est-dist-sq-app}
(\partial^2_{\Sigma} f)(x)(v,v) \geq |v|^2
\end{equation}
provided $|\varphi(x)|$ is sufficiently large. In particular, the critical points of $f$ on $\varphi^{-1} (M \setminus \overline {B_r})$ are strict local minima provided $r > 1$ is sufficiently large.

In what follows, we rework the proof of the mountain pass lemma as presented in e.g. \cite[pp.\ 74--76]{Struwe:variationalMethods} or \cite [pp. 332--334]{Jost:2011}.

Let $y \in \varphi^{-1} (M \setminus \overline{B_r})$ be a critical point of $f$. We let $\Lambda$ denote the collection of all continuous paths 
\[
[0,1] \mapsto  \varphi^{-1} (M \setminus \overline{B_r})
\]
with the property that $|\varphi (\gamma(0))| = r$ and $\gamma(1) = y$. Let 
\[
\alpha = \inf_{\gamma\in\Lambda} \, \sup_{t\in[0,1]} f(\gamma(t)).
\]
Note that
\[
r^2 <  f(y) < \alpha. 
\]
Choose paths $\gamma_m \in \Lambda$ such that  
\[
\alpha = \lim_{m\to\infty} \sup_{t\in[0,1]}f(\gamma_m(t)). 
\]
Consider the quantity
\begin {equation} \label {aux:mountainpassquantity}
\lim_{\delta \to 0} \liminf_{m \to \infty} \inf \{ |(\partial_\Sigma f) (x)| : x \in I (m, \delta)\}
\end{equation}
where 
\[
I (m, \delta) =   \left\{x \in \Sigma :  \text{ there is } t \in [0, 1] \text{ such that } \dist_\Sigma (x, \gamma_m(t)) < \delta  \text{ and } |f (\gamma_m(t)) - \alpha | < \delta \right\}.
\]
Here, 
\[
\dist_\Sigma : \Sigma \times \Sigma \to \R
\]
is the Riemannian distance on $\Sigma$ with respect to the metric induced on $\Sigma$ by the ambient Euclidean metric.  If the quantity in \eqref{aux:mountainpassquantity} vanishes, then --- possibly upon passing to a subsequence --- there exist $t_m \in [0, 1]$ so that 
\[
f(\gamma_m(t_m)) \to \alpha \quad \text{ and } \quad (\partial_\Sigma f) (\gamma_m(t_m)) \to 0
\]
contradicting the choice of the paths $\gamma_m$ in view of the strict convexity estimate \eqref{eq:strict-convex-est-dist-sq-app} and the curvature estimates. Assume now that the quantity in \eqref{aux:mountainpassquantity} is bounded below by $\varepsilon >0$. Fix $\delta \in (0, 1)$ satisfying $\alpha > 2 \delta + r^2$. Up to subsequences, we have that 
\[
|(\partial_{\Sigma} f)(x)| \geq \varepsilon
\]
for all $x \in \Sigma$ for which there is $t \in [0, 1]$ verifying
\[
|f( \gamma_m(t)) - \alpha| < \delta \quad \text{ and } \quad \dist_\Sigma (\gamma_m(t), x) < \delta.
\]
Let $\chi \in C^{\infty}(\R)$ be a function such that $0 \leq \chi \leq 1$ everywhere, which is one on the inverval $[\alpha-\delta,\alpha+\delta]$, and which vanishes away from the interval $(\alpha-2\delta,\alpha+2\delta)$. The length of the vector field 
\[
x \mapsto -\chi(f(x)) (\partial_{\Sigma}f)(x)
\]
is bounded by $2 (\alpha + 2 \delta)$. Owing to the curvature estimates, its flow exists for all time. Let $\Phi_s : \Sigma \to \Sigma$ denote the time $s$ diffeomorphism generated by this vector field. Note that $\Phi_{s} \circ \gamma_{m}\in \Lambda$. As in the standard proof of the mountain pass lemma, we conclude that 
\[
\lim_{s\to\infty}\sup_{t\in[0,1]}f(\Phi_{s}(\gamma_{m}(t)) \leq  \max \left\{\alpha - \delta, \sup_{t\in[0,1]}f(\gamma_{m}(t)) - \frac{\delta \, \varepsilon}{16 \alpha}\right\}.
\]
This contradicts the choice of $\gamma_{m}$.
\end{proof}


The following two results are obtained from Lemma \ref{lem:transverse-int-sph} in a straightforward manner.

\begin{prop} \label{prop:blow-down-is-plane}
Let $(M,g)$ be an asymptotically flat Riemannian $3$-manifold and 
$
\varphi : \Sigma \to M
$
an unbounded complete stable minimal immersion. Let $\{x_k^{*}\}_{k=1}^\infty \subset \Sigma$ be points with 
\[
1 < r_k = |\varphi(x_{k}^{*})| \to\infty \quad \text{ as } k \to \infty.
\]
Consider the pointed immersion 
\[
\varphi^{-1}( M \setminus \overline{ B_1}) \to \R^3 \setminus \overline{ B_{1/r_k} (0) }\quad \text{ given by } \quad x \mapsto \varphi(x) /r_k
\]
with base point $x_k$. Here we use the chart at infinity to identify $M \setminus \overline{ B_1} \cong \R^3 \setminus \overline{ B_1(0)}$.
The trace of every subsequential limit of these pointed immersions is a plane through the origin. 
\end{prop}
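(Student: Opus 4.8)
The plan is to pass to a subsequential limit of the rescaled pointed immersions and to show that it is a complete stable minimal immersion into $\R^3 \setminus \{0\}$ to which Lemma \ref{lem:Bernstein} applies. First I would record the uniform bound on the second fundamental forms of the rescaled immersions: by Lemma \ref{lem:improved-decay}, $|\varphi(x)||h(x)| = o(1)$ as $|\varphi(x)| \to \infty$, so the Euclidean second fundamental form of the rescaling $x \mapsto \varphi(x)/r_k$ at a point with $|\varphi(x)| \sim r_k$ is $O(r_k^{-1}) \cdot r_k = o(1)$; more precisely, after restricting to the region $|\varphi(x)| \geq 1$, the rescaled metrics $r_k^{-2}\varphi^*g$ converge locally smoothly to the Euclidean metric (asymptotic flatness), and the second fundamental forms are uniformly bounded on compact subsets of $\R^3 \setminus \{0\}$. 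Applying the compactness result Lemma \ref{lem:limitsofimmersions} (in the rescaled metrics, which converge to the flat metric), a subsequence converges in the pointed sense to a complete minimal immersion $\varphi_\infty : \Sigma_\infty \to \R^3 \setminus \{0\}$, complete away from the origin; the base points have $|\varphi_\infty(x_\infty^*)| = 1$, so the limit is non-trivial.

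Next I would check that $\varphi_\infty$ is stable. Stability is inherited under this kind of convergence: for a test function $u \in C^\infty_c(\Sigma_\infty)$ supported in some $V_\ell$, one pulls $u$ back via the diffeomorphisms $\psi_k$ to obtain test functions $u_k$ on the (rescaled) $\Sigma_k$, and the stability inequality \eqref{eqn:stabilityminimal} for $\varphi$ — which is scale-invariant in dimension two, since both sides of $\int |\nabla u|^2 \geq \int (|h|^2 + \Ric(\nu,\nu))u^2$ scale the same way, and moreover the ambient curvature terms for the rescaled metrics tend to zero — passes to the limit to give $\int_{\Sigma_\infty} |\nabla u|^2 \geq 0$, i.e. stability with respect to the flat background (where $\Ric \equiv 0$ and the limit immersion is minimal, hence the $|h|^2$ term survives and one gets the genuine Euclidean stability inequality). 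With this in hand, Lemma \ref{lem:Bernstein} applies directly: a complete-away-from-the-origin stable minimal immersion into $\R^3 \setminus \{0\}$ has image a plane. Since $|\varphi_\infty(x_\infty^*)| = 1$, that plane does not pass arbitrarily far from the unit sphere; and because the rescalings are centered at the origin and the immersions are unbounded (so one cannot "lose" the surface into the origin or to infinity), the plane must pass through the origin. This last point I would pin down by observing that if the limiting plane $\Pi$ were at distance $d > 0$ from the origin, then for large $k$ the original surface would avoid the annular region $\{ (d/2) r_k \leq |x| \leq 2 r_k \}$ near the scaled-down copy of the complementary half-space, which is incompatible with $\varphi$ being an unbounded connected immersion meeting $S_{r_k}$.

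The main obstacle I expect is the careful verification that stability genuinely passes to the limit despite the lack of properness and despite the fact that we are simultaneously rescaling the ambient metric: one must be sure that the Dirichlet energies and the curvature integrands converge on the nested exhausting sets $V_\ell$, and that the $o(1)$ decay of the ambient Ricci and scalar curvature (encoded in Lemma \ref{lem:improved-decay} and asymptotic flatness) is strong enough that the rescaled curvature contributions vanish in the limit rather than merely staying bounded. Everything else — the curvature bounds, the extraction of the limit, and the identification via Lemma \ref{lem:Bernstein} — is routine given the results already established in the excerpt.
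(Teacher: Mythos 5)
Your route to ``the trace is a plane'' is correct in spirit: extract a limit via Lemma \ref{lem:limitsofimmersions}, observe that the stability inequality \eqref{eqn:stabilityminimal} is scale-invariant in dimension two and that the rescaled ambient curvature terms vanish (since $\Ric = o(|x|^{-2})$), and invoke Lemma \ref{lem:Bernstein}. The paper reaches the same conclusion more directly: Lemma \ref{lem:improved-decay} gives $|\varphi(x)|\,|h(x)| = o(1)$, so the Euclidean second fundamental form of the rescaled immersion, which is comparable to $r_k|h|=(r_k/|\varphi|)\cdot |\varphi|\,|h|$, tends to zero locally uniformly on $\R^3\setminus\{0\}$; the limit is therefore totally geodesic and, being connected, a (possibly punctured) plane. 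This is a minor difference --- in effect you re-derive a consequence of Lemma \ref{lem:improved-decay}, whose proof already uses Lemma \ref{lem:Bernstein}.

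The genuine gap is in the claim that the plane $\Pi$ passes through the origin, and it is precisely here that the paper's indication that the result follows from Lemma \ref{lem:transverse-int-sph} matters. Your argument that ``the original surface would avoid the annular region $\{(d/2)r_k \le |x| \le 2 r_k\}$ near the complementary half-space'' does not go through for two reasons. First, without properness the pointed limit only captures the part of $\Sigma$ at bounded intrinsic rescaled distance from $x_k^{*}$; the remainder of $\Sigma$ is invisible in the limit and can occupy that annular region at no cost. Second, even granting the avoidance claim, it produces no contradiction: a plane at distance $d\in(0,1)$ from the origin is itself an unbounded connected surface meeting $S_{1}$ that avoids the half of that annulus on the origin side, so ``unbounded, connected, and meeting $S_{r_{k}}$'' is perfectly compatible with the scenario you are trying to rule out. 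The correct mechanism is the one encoded in Lemma \ref{lem:transverse-int-sph}: the function $f = |\varphi|^{2}$ has no critical points on $\varphi^{-1}(M \setminus \overline{B_{r_{0}}})$. If $\Pi$ were at distance $d \in (0,1]$ from the origin, then $|\cdot|^{2}$ restricted to $\Pi$ has a strict minimum at the tangency point of $\Pi$ with $S_{d}$; by the $C^{2}_{loc}$ convergence of the rescaled immersions together with the uniform convexity estimate \eqref{eq:strict-convex-est-dist-sq-app} for $f$ far out, $f$ itself would then attain a genuine local minimum near radius $d\,r_{k}$ for $k$ large, producing a critical point of $f$ at arbitrarily large radius. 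This contradicts Lemma \ref{lem:transverse-int-sph}, and that contradiction is what forces $\Pi$ to pass through the origin.
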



\begin {lemma} \label{lem:normalbecomesradial} Let $(M,g)$ be an asymptotically flat Riemannian $3$-manifold. Let $\nu$ be a unit normal field of a complete stable minimal immersion $\varphi : \Sigma \to M$.
Then 
\[
\nu(x) \cdot \frac{\varphi (x)}{|\varphi (x)|} \to 0 \quad \text{ as } |\varphi(x)| \to \infty.
\] 
\end {lemma}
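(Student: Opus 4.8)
The statement to prove is Lemma~\ref{lem:normalbecomesradial}: for a complete stable minimal immersion $\varphi:\Sigma\to M$ into an asymptotically flat $3$-manifold, the unit normal $\nu(x)$ becomes asymptotically tangential to the coordinate spheres, i.e. $\nu(x)\cdot\varphi(x)/|\varphi(x)|\to 0$ as $|\varphi(x)|\to\infty$. The natural strategy is a contradiction-and-blow-down argument of exactly the same flavor as the one used to prove Lemma~\ref{lem:equator-conc}. First I would reduce to the Euclidean picture: working in the chart at infinity and using asymptotic flatness, the dot product and the distance $|\varphi(x)|$ are comparable to their Euclidean analogues, and by Lemma~\ref{lem:improved-decay} (together with the elementary comparison estimate between $h_\delta$ and $h_g$ that appears in the proof of Lemma~\ref{lem:transverse-int-sph}) we have $|\varphi(x)|\,|h_\delta(x)|=o(1)$ as $|\varphi(x)|\to\infty$. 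So it suffices to prove the statement for the Euclidean-induced geometry.

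Suppose the conclusion fails. Then there is $\varepsilon>0$ and a sequence of points $x_k^\ast\in\Sigma$ with $r_k:=|\varphi(x_k^\ast)|\to\infty$ and $|\nu(x_k^\ast)\cdot\varphi(x_k^\ast)/|\varphi(x_k^\ast)||\geq\varepsilon$ for all $k$. Apply the blow-down: rescale the immersion by $1/r_k$ about the origin of the chart, take base points $x_k^\ast$, and invoke Lemma~\ref{lem:limitsofimmersions} (the curvature bound is supplied by Lemma~\ref{lem:curvatureestimates}; after rescaling the second fundamental form tends to zero by the displayed decay above, so the limit is in fact totally geodesic). By Proposition~\ref{prop:blow-down-is-plane}, the trace of any subsequential limit of the rescaled pointed immersions is a plane $\Pi$ through the origin, and the limit base point $\hat x^\ast$ satisfies $|\hat\varphi(\hat x^\ast)|=1$, i.e. it lies on the unit sphere. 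Since the immersions converge in $C^1_{loc}$, the unit normals converge as well: $\nu$ of the rescaled immersion at $x_k^\ast$ converges to the (constant) unit normal of $\Pi$. But the unit normal of a plane through the origin is orthogonal to every point of that plane; evaluating at the limit point $\hat\varphi(\hat x^\ast)\in\Pi$ with $|\hat\varphi(\hat x^\ast)|=1$, we get that the limit of $\nu(x_k^\ast)\cdot\varphi(x_k^\ast)/r_k$ is the dot product of the plane's normal with a unit vector lying in the plane, hence zero. This contradicts the lower bound $\varepsilon$, which is scaling invariant under the rescaling by $1/r_k$.

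The only slightly delicate point is bookkeeping: one must check that the quantity $\nu(x)\cdot\varphi(x)/|\varphi(x)|$ is genuinely preserved under the rescaling $x\mapsto\varphi(x)/r_k$ (it is, being homogeneous of degree zero in $\varphi$) and that the convergence in the sense of pointed immersions of Lemma~\ref{lem:limitsofimmersions} is strong enough to pass normals to the limit at the base point — which it is, since the convergence is $C^\infty_{loc}$ and the base points converge. Passing from the Euclidean geometry back to $g$ at the very start, using the asymptotic flatness estimates $g_{ij}=\delta_{ij}+o(1)$, introduces only an $o(1)$ discrepancy between the $g$- and $\delta$-versions of the normal and of $|\varphi(x)|$, which is harmless. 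I do not anticipate a genuine obstacle here; the lemma is essentially a packaged consequence of Proposition~\ref{prop:blow-down-is-plane} and the curvature decay of Lemma~\ref{lem:improved-decay}, and the proof is short.
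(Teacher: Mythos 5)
Your proof is correct and follows what is clearly the intended route: the paper does not spell out a proof of Lemma~\ref{lem:normalbecomesradial}, stating only that it (together with Proposition~\ref{prop:blow-down-is-plane}) follows ``in a straightforward manner'' from Lemma~\ref{lem:transverse-int-sph}, and your blow-down-by-contradiction argument via Proposition~\ref{prop:blow-down-is-plane} is exactly that derivation. All the key points check out: the quantity $\nu(x)\cdot\varphi(x)/|\varphi(x)|$ is scale-invariant, the base points after rescaling lie on the unit sphere, the rescaled normals converge to the normal of the limit plane $\Pi$ through the origin (by the $C^\infty_{loc}$ convergence of Lemma~\ref{lem:limitsofimmersions}, and noting that a homothety does not change the Euclidean normal direction), the normal of $\Pi$ is orthogonal to every point of $\Pi$, and the $g$-versus-$\delta$ discrepancy in the normal and in $|\cdot|$ is $o(1)$ by asymptotic flatness. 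The only minor redundancy is that you separately invoke Lemma~\ref{lem:limitsofimmersions}, Lemma~\ref{lem:curvatureestimates}, and Lemma~\ref{lem:improved-decay} to set up the blow-down, when Proposition~\ref{prop:blow-down-is-plane} already packages the existence and planarity of the subsequential limit; but this does not affect correctness.
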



\section {Quotients of immersions} \label{app:quot-imm}

In this appendix, we collect observations on quotients of minimal immersions that do not cross themselves. The first two lemmas are elementary.

\begin {lemma} Let $(M, g)$ be a Riemannian manifold. Let $\varphi : \Sigma \to M$ be a minimal immersion that does not cross itself and where $\Sigma$ has no boundary. Every point $x_1 \in \Sigma$ has a neighborhood $U_1 \subset \Sigma$ with the following property. Whenever $x_2 \in \Sigma$ is such that $\varphi (x_1) = \varphi (x_2)$
there is a neighborhood $U_2 \subset \Sigma$ with $x_2 \in \Sigma$ and a diffeomorphism $\psi : U_1 \to U_2$
so that
\[
\psi (x_1) = x_2 \quad \text{ and } \quad \varphi \circ \psi = \varphi.
\]
\end {lemma}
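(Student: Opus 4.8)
The plan is to exploit the local structure of an immersion that does not cross itself. Fix $x_1 \in \Sigma$. Since $\varphi$ is an immersion, there is an open neighborhood $W_1 \subset \Sigma$ of $x_1$ on which $\varphi$ restricts to an embedding; after shrinking, we may take $W_1$ with compact closure. I claim that $U_1 := W_1$ (possibly shrunk a little more, see below) is the desired neighborhood. The key point is a finiteness/discreteness observation: the fiber $\varphi^{-1}(\varphi(x_1))$ is a discrete subset of $\Sigma$. This follows because, by the definition of ``does not cross itself'', around each point of the fiber $\varphi$ is locally an embedding, so no point of the fiber is an accumulation point of the fiber. Hence on a sufficiently small $\overline{W_1}$ the fiber meets $\overline{W_1}$ only in $x_1$ itself; this will be used to rule out pathologies when we transport small neighborhoods.

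Now let $x_2 \in \Sigma$ with $\varphi(x_1) = \varphi(x_2)$. By hypothesis there are open sets $V_1 \ni x_1$ and $V_2 \ni x_2$ with $\varphi(V_1) = \varphi(V_2)$ and with $\varphi|_{V_1}$ and $\varphi|_{V_2}$ embeddings. Define $\psi := (\varphi|_{V_2})^{-1} \circ (\varphi|_{V_1}) : V_1 \to V_2$. This is a diffeomorphism of $V_1$ onto $V_2$ (composition of two diffeomorphisms onto their images, whose images agree as subsets of $M$), it satisfies $\varphi \circ \psi = \varphi$ on $V_1$ by construction, and $\psi(x_1) = x_2$ since both are the unique preimage in $V_2$ of $\varphi(x_1)$. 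The only remaining issue is that a priori the neighborhood $V_1$ on which $\psi$ is defined depends on $x_2$, whereas the statement asks for a single $U_1$ that works for \emph{every} $x_2$ in the fiber. To fix this, first shrink $U_1$ to a neighborhood of $x_1$ on which $\varphi$ is an embedding and such that $\overline{U_1} \cap \varphi^{-1}(\varphi(x_1)) = \{x_1\}$ (possible by the discreteness just noted, applied on the compact set $\overline{W_1}$). Then for any $x_2$ in the fiber, take the $V_1$ supplied by the no-crossing hypothesis, replace it by $U_1 \cap V_1$ (still a neighborhood of $x_1$ on which $\varphi$ embeds), let $V_2' := \varphi|_{V_2}^{-1}(\varphi(U_1 \cap V_1))$, and form $\psi : U_1 \cap V_1 \to V_2'$ exactly as above. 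Finally enlarge back: since $\varphi|_{U_1}$ is an embedding, $\psi$ extends to all of $U_1$ by the same formula $(\varphi|_{V_2})^{-1}\circ(\varphi|_{U_1})$ as soon as we know $\varphi(U_1) \subset \varphi(V_2)$; shrinking $U_1$ once more (uniformly, using that the fiber is finite when $\overline{U_1}$ is compact, or simply working with the intersection over the finitely many fiber points that meet a fixed compact exhaustion piece) makes this hold for all $x_2$ simultaneously.

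I expect the only genuine subtlety — and the main obstacle — to be this matter of \emph{uniformity of $U_1$ over the fiber}: the no-crossing property is stated pointwise, so one must argue that the fiber $\varphi^{-1}(\varphi(x_1))$ is discrete (hence locally finite) and then patch the finitely many local diffeomorphisms by intersecting domains. The minimality of $\varphi$ plays no essential role here; it is listed only because this lemma is used downstream for minimal immersions. Everything else is routine manifold topology: a local embedding has a local smooth inverse, and composing two such inverses with matching images yields the claimed $\psi$ with $\varphi\circ\psi=\varphi$ and $\psi(x_1)=x_2$.
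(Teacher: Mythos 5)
Your setup is right through the local construction $\psi = (\varphi|_{V_2})^{-1} \circ (\varphi|_{V_1})$, and you correctly flag uniformity of $U_1$ over the fiber as the only non-trivial point. But your resolution of that point fails. The fiber $\varphi^{-1}(\varphi(x_1))$ is discrete, as you say, but in general it is \emph{infinite} when $\Sigma$ is non-compact, and the neighborhoods $V_1 = V_1(x_2)$ furnished by the no-crossing definition can shrink to $\{x_1\}$ as $x_2$ runs over the fiber, so that $\bigcap_{x_2} V_1(x_2)$ need not be open. The finiteness you invoke governs only $\varphi^{-1}(\varphi(x_1)) \cap \overline{U_1}$, which is beside the point: the lemma must produce one $U_1$ that works for \emph{every} $x_2$ in the fiber, and ``working with the intersection over the finitely many fiber points that meet a fixed compact exhaustion piece'' simply ignores the rest of the fiber.

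In fact the gap cannot be closed by purely topological patching, because the statement as written is not quite true. Let $M = \R^3$, $\Sigma = \mathbb{C}\setminus\{p_k : k\geq 1\}$ with $p_k = \tfrac{1}{k} + 2\pi i k$, and $\varphi(z) = e^z$ mapped into the coordinate plane $\{x^3=0\}$. This is a connected, boundaryless minimal immersion that does not cross itself (for $z_2 = z_1 + 2\pi i m$, translation by $2\pi i m$ matches small disks around $z_1$ and $z_2$), yet for $x_1 = 0$ the fiber contains every $2\pi i k$; any admissible $\psi_k$ must agree with $z\mapsto z+2\pi i k$ on the component of $0$ in $U_1$, and this eventually leaves $\Sigma$ because $\tfrac1k \in U_1$ while $\tfrac1k + 2\pi i k = p_k\notin\Sigma$ for all large $k$. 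So no single $U_1$ works. The hypothesis missing from both your proof and the statement --- but satisfied in every application the paper makes of it --- is completeness of $\Sigma$ in the induced metric. With it, take $U_1 = B_\rho(x_1)$ a normal ball on which $\varphi$ embeds and set $\psi = \exp^\Sigma_{x_2}\circ L\circ(\exp^\Sigma_{x_1})^{-1}$, where $L = d\bigl((\varphi|_{V_2})^{-1}\circ\varphi|_{V_1}\bigr)|_{x_1}$; completeness defines $\exp^\Sigma_{x_2}$ on all of $T_{x_2}\Sigma$, and an open-and-closed continuation along radial geodesics (using no-crossing at each time to obtain a local identification, then uniqueness of geodesics) gives $\varphi\circ\psi=\varphi$ on all of $U_1$, after which injectivity of $\varphi|_{U_1}$ makes $\psi$ a diffeomorphism onto its image. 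This is precisely the mechanism by which a local isometry from a complete manifold is promoted to a covering, and I expect it is what the authors intend by ``elementary.''
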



\begin {lemma} \label{lem:generalquotient} Let $(M, g)$ be a Riemannian manifold. Let $\varphi : \Sigma \to M$ be a connected minimal immersion that does not cross itself where $\Sigma$ has no boundary. We say that two points $x_1, x_2 \in \Sigma$ are equivalent and write 
\[
x_1 \sim x_2 \quad \text{ if } \quad \varphi(x_{1}) = \varphi(x_{2}).
\]
The topological quotient $\tilde \Sigma = \Sigma /{\sim}$ is a smooth manifold. The quotient map 
\[
\pi : \Sigma \to \tilde \Sigma \quad \text{ given by } \quad x \mapsto [x]_{\sim}
\]
is a covering. There is a unique immersion $\tilde \varphi : \tilde \Sigma \to M$ such that the diagram
\[
\xymatrix{
\Sigma \ar[d]_{\pi} \ar[dr]^{\varphi}\\
\tilde \Sigma \ar[r]_{\tilde\varphi} & M
}
\]
commutes.
\end {lemma}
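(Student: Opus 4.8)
The goal is to prove Lemma~\ref{lem:generalquotient}: given a connected minimal immersion $\varphi:\Sigma\to M$ of a boundaryless surface that does not cross itself, the quotient $\tilde\Sigma=\Sigma/{\sim}$ (identifying points with the same image) is a smooth manifold, the quotient map $\pi$ is a covering, and $\varphi$ factors through $\pi$ via a unique immersion $\tilde\varphi$.

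\emph{The plan is as follows.} First I would invoke the preceding lemma, which supplies for each $x_1\in\Sigma$ a distinguished neighborhood $U_1$ with the property that every $x_2\sim x_1$ has a neighborhood $U_2$ and a diffeomorphism $\psi:U_1\to U_2$ with $\psi(x_1)=x_2$ and $\varphi\circ\psi=\varphi$. The first real step is to show that the equivalence relation $\sim$ is \emph{closed} and that the quotient topology on $\tilde\Sigma$ is Hausdorff and second countable. Closedness follows because if $x_1^{(n)}\to x_1$, $x_2^{(n)}\to x_2$ with $\varphi(x_1^{(n)})=\varphi(x_2^{(n)})$, then $\varphi(x_1)=\varphi(x_2)$ by continuity; for Hausdorffness one uses the ``does not cross itself'' hypothesis to separate two non-equivalent points by saturated open sets, shrinking the distinguished neighborhoods of the preceding lemma so their $\varphi$-images are disjoint embedded disks. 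Second countability is inherited from $\Sigma$ since $\pi$ is an open map (the saturation of an open set is a union of diffeomorphic copies, hence open).

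Next I would build the smooth structure and verify the covering property simultaneously. Given $\tilde x\in\tilde\Sigma$ with $\pi^{-1}(\tilde x)=\{x_\alpha\}$, I would take the distinguished neighborhood $U$ of one representative $x_{\alpha_0}$, small enough that $\varphi|_U$ is an embedding onto an embedded disk $D\subset M$; the ``does not cross'' condition, together with a local finiteness argument (the fiber $\pi^{-1}(\tilde x)$ is discrete and, after shrinking, each $x_\alpha$ has a neighborhood mapped diffeomorphically onto $D$), shows that $\pi^{-1}(\pi(U))=\bigsqcup_\alpha U_\alpha$ is a disjoint union of open sets each mapped homeomorphically onto the open set $\pi(U)\subset\tilde\Sigma$. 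This exhibits $\pi$ as a covering map near $\tilde x$; transporting the smooth chart of $U$ down through $\pi$ gives a chart on $\pi(U)$, and the transition maps between two such charts are exactly the $\psi$'s from the preceding lemma (composed with ambient chart changes), hence smooth. Thus $\tilde\Sigma$ is a smooth manifold and $\pi$ is a smooth covering. Finally, $\tilde\varphi$ is well-defined on the underlying set by construction of $\sim$; in the chart $\pi(U)$ it reads $\tilde\varphi=\varphi|_U\circ(\pi|_U)^{-1}$, which is a smooth immersion, and uniqueness is immediate from surjectivity of $\pi$.

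\emph{The main obstacle} I anticipate is the separation/local-finiteness step: ensuring that for a given fiber the number of sheets is locally constant and that non-equivalent points can genuinely be separated by \emph{saturated} neighborhoods. A priori the fiber $\pi^{-1}(\tilde x)$ could be infinite, and one must rule out that points of $\Sigma$ mapping near $\varphi(x_{\alpha_0})$ but lying in a different sheet accumulate onto $x_{\alpha_0}$; this is precisely where completeness or at least the ``does not cross'' hypothesis does the work, since it forces any two local sheets through a common image point to coincide as germs, so distinct sheets stay a definite distance apart in $\Sigma$. Once this local product structure is pinned down, the remaining verifications (smoothness of transitions, the immersion property of $\tilde\varphi$, uniqueness) are routine.
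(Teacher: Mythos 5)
There is a genuine gap, and it is exactly at the step you flag as ``the main obstacle''. Your worry is phrased as ruling out accumulation of sheets \emph{onto} $x_{\alpha_0}$, but that is already handled by local injectivity of $\varphi$ (the fiber $\pi^{-1}(\tilde x)$ is automatically discrete and closed). The real difficulty is different: you need $\pi^{-1}(\pi(U))$ to consist \emph{only} of the sheets $U_\alpha$ through the fiber over $\tilde x$, and nothing in the ``does not cross'' hypothesis forces a component of $\pi^{-1}(\pi(U))$ meeting $\text{sat}(U)$ over some $z\in U$, $z\neq x_{\alpha_0}$, to extend to a full copy of $U$. Such an ``incomplete sheet'' can genuinely exist: take $\Sigma$ to be the half-plane $\{\theta>0\}\times\R$ inside the universal cover of a catenoid and $\varphi$ the composition with the covering map into $\R^3$. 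This is a connected boundaryless minimal immersion that does not cross itself; the quotient $\tilde\Sigma$ is a smooth manifold (the catenoid), but $\pi$ fails to be a covering along the seam $\{\theta\in\mathbb{Z}\}$, because the piece of $\Sigma$ with $\theta$ near $0$ is an incomplete sheet over any small chart of $\tilde\Sigma$ there. So ``does not cross'' alone does not do the work you want, and the slogan ``distinct sheets stay a definite distance apart'' is not the right mechanism.

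What closes the gap is precisely the completeness you mention in passing and then set aside. Shrink $U$ to a small geodesic ball about $x_{\alpha_0}$ on which $\varphi$ is an embedding. Given $y\in\text{sat}(U)$ with $\varphi(y)=\varphi(z)$, $z\in U$, lift the radial path in $U$ from $z$ to $x_{\alpha_0}$ to a path starting at $y$: the lift exists locally because $\varphi$ is a local diffeomorphism onto its image and two local branches with a common image agree (``does not cross''), it preserves length because $\varphi$ is a Riemannian immersion, and it therefore stays in a metric ball of bounded radius, so by completeness (Hopf--Rinow) the lift extends to the whole path. Its endpoint is a preimage of $\varphi(x_{\alpha_0})$, i.e.\ some $x_\alpha$, and by uniqueness of lifts this identifies $y$ with $\psi_\alpha(z)\in U_\alpha$. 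That proves $\text{sat}(U)=\bigsqcup_\alpha U_\alpha$ and hence the covering property. In the paper this lemma is only ever applied to complete immersions (Theorem~\ref{thm:reduction}, Proposition~\ref{prop:pass-to-top}), so the omission is harmless there, but a proof of the lemma must use completeness explicitly at this point; you should not attribute the needed local triviality to the ``does not cross'' hypothesis.
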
 

\begin{remark} \label{rem:willbeinjective}

Let $\varphi : \Sigma\to M$ be a connected two-sided minimal immersion that does not cross itself. Let $\nu : \Sigma \to T M$ be a unit normal field. A variant of the preceding lemma where we only identify points $x_1, x_2 \in \Sigma$ with 
\[
\varphi(x_1) = \varphi(x_2) \quad \text{ and } \quad \nu(x_1) = \nu(x_2)
\]
allows us to factor through to a two-sided  minimal immersion $\tilde\varphi:\tilde\Sigma \to M$ that is either injective or two-to-one by a \emph{side-preserving} covering $\pi : \Sigma \to \tilde \Sigma$. A useful example to bear in mind in this context is the minimal immersion $\mathbb{S}^2 \to \mathbb{R P}^3$ obtained from following the antipodal map $\mathbb{S}^2 \to \mathbb{R P}^2$ by the equatorial embedding $\mathbb{R P}^2 \to \mathbb{R P}^3$.
\end{remark}

\begin {remark} A cover of a stable minimal immersion\footnote{Recall that in this paper a stable minimal immersion is by definition two-sided.} is still stable; see \cite{Fischer-Colbrie-Schoen}. The converse of this statement is not true in general; see \cite[p.\ 491]{Meeks-Rosenberg:2006}. 
\end {remark}

The next result is a special case of \cite[Lemma A.1 (2)]{Meeks-Rosenberg:2006}. We include an outline of the proof for convenient reference. 

\begin{lemma}[\cite{Meeks-Rosenberg:2006}] \label{lem:stability-factors}
Let $(M, g)$ be a Riemannian $3$-manifold. Let 
$
\varphi: \Sigma \to M
$
be a complete stable minimal immersion such that $\Sigma$ with the induced metric is conformal to the plane. Let  
$
\pi : \Sigma \to \tilde \Sigma
$ 
be a side-preserving covering of surfaces where $\tilde \Sigma$ is non-compact. The map   
$
\tilde \varphi: \tilde \Sigma \to M
$
that makes the diagram 
\[
\xymatrix{
\Sigma \ar[d]_{\pi} \ar[dr]^{\varphi}\\
\tilde \Sigma \ar[r]_{\tilde\varphi} & M
}
\]
commute is a complete stable minimal immersion. 

\begin{proof} It suffices to consider the case where \[\Sigma = \R \times \R \quad \text{ and } \quad \tilde \Sigma = \R \times \R / \mathbb{Z}\] and where \[\pi : \Sigma \to \tilde \Sigma \quad \text{ is given by } \quad (x, y) \mapsto ( x, [y]).\] Let $T > 0$. Assume that the domain $(-T , T) \times \mathbb{R}/\mathbb{Z}$ is unstable for the immersion \[\tilde \varphi : \R \times \R / \mathbb{Z} \to M.\] It follows that there is $\tilde u \in C^\infty_{c}( (-T, T) \times \mathbb{R}/\mathbb{Z})$ and $\delta > 0$ such that \[\delta +  \int_{(-T,T) \times \mathbb{R}/\mathbb{Z}}  |\tilde \nabla \tilde u|^{2}  \leq   \int_{(-T,T) \times \mathbb{R}/\mathbb{Z}}  (|\tilde h|^{2}+\Ric(\tilde \nu, \tilde \nu))\tilde u^{2}.\] Let $\chi \in C^{\infty}((-3,0))$ be such that $\chi(x) = 1$ for $x > -1$ and $\chi(x) = 0$ for $x < -2$. Given $n \geq 1$, consider the cut-off function $\chi_n \in C^{\infty}_{c}((-3,n+3))$ given by \[ \chi_n(x) =  \begin{cases} \chi(x) & x \in (-3,0)\\ 1 & x \in [0,n]\\ \chi(-x+n) & x \in (n,n+3). \end{cases}\] Set $ u_n = (\tilde u \circ \pi)\chi_n \in C^\infty_{c}(\mathbb{R}^{2})$. Using the stability of the immersion \[\varphi: \R \times \R \to M\] and equivariance of $\varphi$ in the second component, we obtain that \begin{align*} 0  & \geq \int_{(-T, T) \times (-3,n+3)} (|h|^{2}+\Ric (\nu,\nu)) u_n^{2} - |\nabla  u_n|^{2} \\ & = n\delta + \int_{(-T , T) \times ((-3,0)\cup(n,n+3))} (|h|^{2}+\Ric(\nu,\nu)) u_n^{2} - |\nabla  u_n|^{2} \\ & = n\delta - c \end{align*} where the constant $c$ is independent of $n$. Taking $n$ sufficiently large, we obtain a contradiction.
\end{proof}
\end{lemma}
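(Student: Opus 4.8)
The plan is to reduce immediately to the model case where $\Sigma = \R \times \R$ is the universal cover of the flat cylinder $\tilde\Sigma = \R \times (\R/\mathbb Z)$, since a side-preserving covering of a plane onto a non-compact surface must factor through this one (the only non-compact quotient of $\R^2$ by a nontrivial deck group acting freely and properly discontinuously, compatibly with the two-sidedness, is a cylinder). After this reduction, the statement that $\tilde\varphi$ is a complete stable minimal immersion breaks into two parts: completeness is clear since $\pi$ is a Riemannian covering for the induced metrics and $\tilde\Sigma$ is a quotient of a complete surface by a discrete group of isometries; minimality is clear because minimality is a local condition and $\pi$ is a local isometry. So the entire content is the stability of $\tilde\varphi$.

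For stability I would argue by contradiction: suppose some bounded domain $(-T,T)\times(\R/\mathbb Z)$ in $\tilde\Sigma$ fails the stability inequality, witnessed by a test function $\tilde u$ with a strict deficit $\delta > 0$ in the quadratic form $Q(\tilde u) = \int (|\tilde\nabla\tilde u|^2 - (|\tilde h|^2 + \Ric(\tilde\nu,\tilde\nu))\tilde u^2)$. The idea is then to pull $\tilde u$ back to the universal cover and tile: the pulled-back function $\tilde u\circ\pi$ is periodic in the $y$-direction, so laying down $n$ translated copies side by side (in the $x$-direction, where $\tilde u$ is compactly supported) produces, after multiplying by a longitudinal cut-off $\chi_n$ that is $1$ on $[0,n]$ and tapers off in fixed-width collars near $-3$ and $n+3$, a compactly supported function $u_n$ on $\R^2$. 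Equivariance of $\varphi$ in the second coordinate means $|h|^2 + \Ric(\nu,\nu)$ is invariant under the $x$-translations, so the bulk contribution of $Q(u_n)$ is exactly $n$ copies of $-\delta$, i.e.\ $\leq -n\delta$, while the gradient and curvature terms coming from the two fixed-width collar regions where $\chi_n$ transitions contribute an error bounded by a constant $c$ independent of $n$. This gives $0 \geq n\delta - c$ for all $n$, a contradiction once $n > c/\delta$, and hence $\tilde\varphi$ is stable.

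The step I expect to carry the real weight is controlling the collar error uniformly in $n$: one needs that $\int_{(-T,T)\times((-3,0)\cup(n,n+3))}(|h|^2 + \Ric(\nu,\nu))u_n^2 - |\nabla u_n|^2$ stays bounded. This is where the hypothesis that $\Sigma$ is conformal to the plane is used only indirectly (it guarantees $\tilde\Sigma$ is a cylinder and not something worse), but the substantive input is that $|\tilde h|^2 + \Ric(\tilde\nu,\tilde\nu)$ and $|\nabla\chi_n|$ are bounded on the fixed-geometry collars $(-3,0)$ and $(n,n+3)$ — the latter because $\chi_n$ is just a translate of a single fixed profile, the former because the immersion has bounded second fundamental form on the relevant compact-in-$x$ strip and the ambient Ricci is bounded there. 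Once this uniform bound is in hand the contradiction is immediate, so the only care required is in setting up the cut-off and invoking the periodicity correctly; the rest is bookkeeping. I would present the model-case reduction first, then the cut-off construction, then the two-line estimate leading to $n\delta - c \leq 0$.
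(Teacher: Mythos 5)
Your proposal is correct and follows essentially the same approach as the paper: reduce to the model covering $\R^2 \to \R\times(\R/\mathbb Z)$, assume an instability deficit $\delta$ on $(-T,T)\times(\R/\mathbb Z)$, pull back and truncate with a cut-off $\chi_n$ equal to $1$ on $[0,n]$ with fixed-width collars, and use periodicity to obtain $0 \geq n\delta - c$. The only blemish is a directional slip in your parenthetical (the translated copies are laid down in the periodic direction, not the direction where $\tilde u$ has compact support), but your subsequent description of $\chi_n$ and the collar estimate makes clear you mean the right thing, and the argument is sound.
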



\section {Barriers for the functional $\Omega \mapsto \area (\partial \Omega) - H \vol(\Omega)$}

\begin {lemma} \label{lem:effectivebarrier}

Let $g$ be a Riemannian metric on $B_2 (0) \times (-2, 2) \subset \R^{m+1}$ and let $u \in C^\infty(B_2 (0))$ have the following properties: 
\begin {enumerate} [(i)]
\item $-2 \leq u (x) \leq 2$ for all $x \in B_2(0)$. 
\item $u(x) > 0$ for all $x \in B_1(0)$.
\item $u(x) \leq 0$ for all $x \in B_2(0)$ with $|x| > 1$.  
\end {enumerate} 
Let $\varepsilon \in (0, 1)$. The region 
\[
D_\varepsilon = \{ (x, z) : x \in B_1(0) \text{ and } 0 < z< \varepsilon  u(x) \}
\]
is foliated by level sets of the function 
\[
v : B_1(0) \times (-2, 2) \to \R \quad \text{ given by } \quad (x, z) \mapsto \frac{z}{u(x)}. 
\]
Let $(x_0, z_0) \in D_\varepsilon$. The vector field
\[
X = \frac{\nabla v}{ |\nabla v|}
\]
at the point $(x_0, z_0)$ is equal to the upward pointing unit normal of the graph 
\[
x \mapsto \left(x, \frac{z_0 }{u(x_0)} u(x) \right)
\]
and its divergence at $(x_0, z_0)$ is equal to the mean curvature of this graph computed with respect to the upward pointing unit normal at $(x_0, z_0)$. As $\varepsilon \searrow 0$, the mean curvatures of these graphs approach the mean curvature of the disk $B_1(0) \times \{0\}$ where we identify points with the same first coordinate. 
\end {lemma}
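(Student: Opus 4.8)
The plan is to observe that the three assertions are, respectively, a direct unwinding of the definition of $v$, the standard identity relating the divergence of a normalized gradient to the mean curvature of its level sets, and a routine continuity statement; I would verify them in this order.

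First I would identify the level sets. Since $u>0$ on $B_1(0)$, for every $c\in\R$ the set $\{v=c\}$ inside $B_1(0)\times(-2,2)$ is the graph $\Gamma_c=\{(x,c\,u(x)):x\in B_1(0)\}$, which is contained in the domain of $g$ whenever $|c|<1$ because $|u|\le 2$. For $(x_0,z_0)\in D_\varepsilon$ one has $v(x_0,z_0)=z_0/u(x_0)\in(0,\varepsilon)$, and $\Gamma_{z_0/u(x_0)}$ is exactly the graph in the statement; conversely $D_\varepsilon=\bigcup_{c\in(0,\varepsilon)}\Gamma_c$. Because $dv(\partial_z)=1/u(x)>0$, the gradient $\nabla v$ vanishes nowhere on $B_1(0)\times(-2,2)$, so $D_\varepsilon$ is genuinely foliated by the $\Gamma_c$ and $X=\nabla v/|\nabla v|$ is a well-defined unit field there. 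Being $g$-orthogonal to each level set, $X$ restricts along $\Gamma_c$ to a unit normal of $\Gamma_c$; and since $g(X,\partial_z)=dv(\partial_z)/|\nabla v|>0$, it is the upward-pointing one. This is the first assertion.

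For the second assertion I would invoke the level-set identity $\operatorname{div}X=H$ along each $\Gamma_c$, where $H$ denotes the scalar mean curvature of $\Gamma_c$ with respect to the normal $X$, computed with the paper's convention $h(Y,Z)=g(\nabla_Y X,Z)$, $H=\operatorname{tr}h$. The one-line proof: fixing a $g$-orthonormal frame $e_1,\dots,e_{m+1}$ at a point of $\Gamma_c$ with $e_{m+1}=X$, the term $g(\nabla_{e_{m+1}}X,e_{m+1})=\tfrac12 X(|X|^2)=0$ vanishes, while $\sum_{i\le m}g(\nabla_{e_i}X,e_i)$ depends only on $X$ along $\Gamma_c$ and equals $\operatorname{tr}h=H$. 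The sign --- that one gets $+H$ rather than $-H$ --- I would pin down by checking the model case of a centered round sphere in Euclidean space, for which $\operatorname{div}(x/|x|)=m/|x|$ agrees with its mean curvature with respect to the outward normal. Evaluating at $(x_0,z_0)\in\Gamma_{z_0/u(x_0)}$ gives the claim.

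Finally, for the behaviour as $\varepsilon\searrow 0$, I note that as $(x_0,z_0)$ ranges over $D_\varepsilon$ the parameter $c=z_0/u(x_0)$ ranges over $(0,\varepsilon)$, so the graphing functions $x\mapsto c\,u(x)$ satisfy $\|c\,u\|_{C^k(\overline{B_1(0)})}\le\varepsilon\,\|u\|_{C^k}$ and hence converge to $0$ in $C^2$ uniformly in $c$. Since the mean curvature of a graph over $B_1(0)$ with respect to the fixed metric $g$ is a smooth function of the point and of the $2$-jet of the graphing function, the mean curvature of $\Gamma_c$ at $(x_0,c\,u(x_0))$ converges, uniformly in $(x_0,z_0)\in D_\varepsilon$, to the mean curvature of the flat graph $B_1(0)\times\{0\}$ at $(x_0,0)$, which is the asserted convergence under the identification of points with the same first coordinate. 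There is no serious obstacle in any of this; the only points needing care are the bookkeeping of sign conventions in the second step and ensuring that the convergence in the last step is uniform in the base point, which follows from the smooth dependence of the mean curvature on the $2$-jet together with the uniform $C^2$-smallness of $c\,u$.
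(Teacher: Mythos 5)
The paper states Lemma \ref{lem:effectivebarrier} without giving a proof (it is presented as a standard calibration fact in the appendix), so there is no paper argument to compare against. Your proof is correct and supplies the natural argument: the level sets of $v$ are exactly the graphs $\Gamma_c=\{(x,cu(x))\}$ with $c=v\in(0,\varepsilon)$ filling $D_\varepsilon$; since $dv(\partial_z)=1/u>0$ the gradient is nowhere zero so $X=\nabla v/|\nabla v|$ is a well-defined unit field orthogonal to the leaves, and $g(X,\partial_z)>0$ identifies it as the upward normal; the ambient divergence of the unit field $X$ coincides with its tangential divergence along each leaf because $g(\nabla_X X,X)=\tfrac12 X(|X|^2)=0$, which with the paper's convention (mean curvature $=$ tangential divergence of the chosen normal, cf.\ Appendix \ref{sec:variationformulae}) gives $\operatorname{div}X=H$; and the uniform $C^2$ convergence $cu\to 0$ as $\varepsilon\searrow 0$ together with the smooth dependence of graphical mean curvature on the $2$-jet gives the final limit. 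Your sign check against the round sphere and your remark on uniformity in the base point are both appropriate; I see no gap.
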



\section {Variation formulae for area and relative volume} \label{sec:variationformulae}

In this section, we recall the first and second variation formulae for the area and (relative) volume of immersions. We refer the reader to e.g. \cite{Barbosa-doCarmo-Eschenburg:1988, Bray:2001} for derivations.

Let $(M, g)$ be a Riemannian manifold. We consider a two-sided immersion $\varphi : \Sigma \to M$ with unit normal $\nu : \Sigma \to T M$ and mean curvature $H \in C^\infty(\Sigma)$. (We always take the mean curvature to be the tangential divergence of the designated unit normal. The mean curvature vector field is thus given by $- H \nu$.) We also assume that neither $M$ nor $\Sigma$ have boundary. 

Let $U \in C^\infty(\Sigma \times (- \varepsilon, \varepsilon))$ be compactly supported in $\Sigma$ and such that $U(x, 0) = 0$ for all $x \in \Sigma$. Shrinking $\varepsilon > 0$, if necessary, we obtain a variation $\{\varphi_t : \Sigma \to M\}_{t \in (- \varepsilon, \varepsilon)}$ of $\varphi : \Sigma \to M$ through immersions  
\[
\varphi_t : \Sigma \to M \quad \text{ given by } \quad x \mapsto \exp U(x, t) \nu(x).
\]
Except for reparametrizations, every variation of $\varphi : \Sigma \to M$ arises in this way. We have that 
\begin {align*}
\frac{d}{dt} \Big|_{t = 0} \text{area of } \varphi_t &= \int_\Sigma H \,  \dot U (\, \cdot \,, 0) \\
\frac{d^2}{dt^2} \Big|_{t = 0} \text{area of } \varphi_t &= \int_\Sigma H^2\,  \dot U (\, \cdot \,, 0)^2 + H \,  \ddot U (\, \cdot \,, 0)+ |\nabla  (\dot U (\, \cdot \,, 0))|^2 - (|h|^2 + \Ric (\nu, \nu)) \dot U (\, \cdot \,, 0)^2. 
\end{align*}
Observe the abuse of notation here: the area of $\varphi_t$ may be infinite. Instead, we should consider the measure (with respect to the induced metric) of the spatial support of $U$ or that of any compact subset of $\Sigma$ containing it. Assume now that $M$ is oriented. We define the relative volume of $\varphi_t$ by integrating the pull-back of the volume form of $(M, g)$ by the map 
\[
(x, t) \mapsto \varphi_t(x)
\]
across $\Sigma \times [0, t]$ if $t \geq 0$ and across $\Sigma \times [t, 0]$ if $t < 0$. It follows that
\begin {align*}
\frac{d}{dt} \Big|_{t = 0} \text{relative volume of } \varphi_t &= \int_\Sigma \dot U (\, \cdot \,, 0) \\
\frac{d^2}{dt^2} \Big|_{t = 0} \text{relative volume of } \varphi_t &= \int_\Sigma \ddot U (\, \cdot \,, 0)  + H \,  \dot U (\, \cdot \,, 0)^2. 
\end{align*}
Here, dots indicate derivatives with respect to the variation parameter, $h$ is the second fundamental form of $\varphi : \Sigma \to M$, $\Ric$ is the ambient Ricci curvature, and integration, gradient, and lengths are taken with respect to the induced metric $\varphi^* g$ on $\Sigma$. 

The special case where $U (x, t) = t u (x)$ for all $(x, t) \in \Sigma \times (- \varepsilon, \varepsilon)$ where $u \in C^\infty_c(\Sigma)$ is particularly important. Note that $\dot U (\, \cdot \, , 0) = u$ in this case.


\section {Area-minimizing boundaries} \label{sec:am}

Let $(M, g)$ be an asymptotically flat Riemannian $3$-manifold. Extend $M$ inwards across each of its minimal boundary components by thin collar neighborhoods to a new manifold $\hat M$ without boundary. Denote the union of these finitely many collar neighborhoods by $C$. We consider the collection $\mathcal{F}$ of all properly embedded $3$-dimensional submanifolds with boundary $\Omega \subset \hat M$  with $C \subset \Omega$. A surface $\Sigma \subset M$ \emph{bounds} in $M$ \emph{relative to} $\partial M$ if it arises as the boundary of such a smooth region. Note that $\partial M$ bounds in this sense. 

We say that the boundary of $\Omega \in \mathcal{F}$ is \emph{area-minimizing} if for all $\rho > 1$ and $\tilde \Omega \in \mathcal{F}$ with $ \Omega \setminus B_\rho = \tilde \Omega \setminus B_\rho$ we have that
\[
\area (B_{2\rho} \cap \partial \Omega) \leq \area(B_{2\rho} \cap \partial  \tilde \Omega).\footnote{We could also work with the larger class of all $3$-dimensional submanifolds with locally finite boundary area. However, by standard geometric measure theory, every \emph{such} submanifold with area-minimizing boundary \emph{is} properly embedded.}
\]
The components of the boundary $\Sigma \subset M$ of such $\Omega \in \mathcal{F}$ are stable minimal surfaces in $(M, g)$. It follows from the arguments in Section 6 of \cite{isostructure} that $\Sigma$ has at most one unbounded component $\Sigma_0$. More precisely, if we consider the homothetic blow-downs of $\Omega$ in the chart at infinity 
\[
M \setminus U \cong \{x \in \R^3 : |x| > 1\}
\]
by a sequence $\lambda_i \to \infty$, then we can pass to a geometric subsequential limit in $\R^3 \setminus \{0\}$. This limit is either a half-space through the origin, or empty in the case where $\Omega$ is bounded. 


\section {A particular conformal change of metric} \label{sec:cf}

Deformations of the metric tensor such as the ones considered here have been studied in great generality by P. Ehrlich in \cite{Ehrlich:1976}; see also the paper by G. Liu \cite{Liu:2013}.

Let $f \in C^\infty(\R)$ be a non-positive function with support in the interval $[0, 3]$ such that 
\[
f(s) = - \exp (18 / (s-3))
\]
when $s \in (1, 3)$. This definition is made so that 
\[
0 <  f'(s) \quad \text{ and } \quad s f'' (s) + 3 f' (s) < 0 
\]
for all $s \in (1, 3)$. 

Let $(M, g)$ be a homogeneously regular Riemannian $3$-manifold. Choose $0 < r_0 < \text{inj} (M, g)/4$ so that 
\[
\Delta_{g} \dist_g(\, \cdot\, , p)^2 \leq 8 \quad \text{ on } \quad \{ x \in M : \dist_g(x, p) \leq 3 r_0\}
\] 
for all $p \in M$. Here, $\Delta_g$ is the non-positive Laplace-Beltrami operator with respect to $g$. Fix $p \in M$ and $0 < r \leq r_0$. Consider the function $v : M \to \R$ given by
\[
x \mapsto r^4 f ( \dist_g(x, p)/r).
\]
Note that $v$ is smooth, non-positive, and supported in $\{x \in M : \dist_g(x, p) \leq 3 r\}$. Moreover, 
\[
v< 0 \qquad  \text{ and }  \qquad \Delta_g v < 0
\]
on $\{ x \in M : r < \dist_g(x, p) < 3 r \}$.

For $\epsilon > 0$ sufficiently small, a smooth family of conformal metrics $\{g(t)\}_{t \in (0, \epsilon)}$ with the properties needed in the proof of Theorem \ref{thm:Schoenrigidity} is given by
\[
g(t) = (1 + t v)^4 g. 
\]
In fact, when $u \in C^\infty(M)$ is positive and $\tilde g = u^4 g$ is a conformal metric, we have that
\[
u^5 R_{\tilde g} = u R_g - 8  \Delta_g u
\]
and 
\[
u^3 H_{\tilde g} = u H_g + 4 (\text{d} u)(\nu_g )
\]
along $\Sigma$ where $\nu_g$ is the unit normal with respect to $g$. 


\section{Existence of isoperimetric regions of all volumes} \label{sec:appendixisoallvolume}

S. Brendle and the second-named author observed \cite{BrendleChodosh} that the monotonicity of the Hawking mass along G. Huisken and T. Ilmanen's weak inverse mean curvature flow \cite{Huisken-Ilmanen:2001} can be combined with the co-area formula to give an explicit lower bound for the volume swept out under inverse mean curvature flow of a surface. This insight was subsequently used by the second-named author to study the large isoperimetric regions of asymptotically hyperbolic manifolds \cite{Chodosh:largeIso}. In a recent preprint, Y.\ Shi \cite{Shi:isoIMCF}  observed that closely related arguments can be used to construct regions whose isoperimetric ratio is better than Euclidean in non-flat asymptotically flat manifolds that have non-negative scalar curvature. Here we note that Y. Shi's observation implies the existence of isoperimetric regions of \emph{all} volumes in asymptotically flat $3$-manifolds with non-negative scalar curvature. This answers a question of G. Huisken.

\begin{prop}
Let $(M,g)$ be an asymptotically flat Riemannian $3$-manifold with horizon boundary and non-negative scalar curvature. Then $(M, g)$ admits isoperimetric region for every volume, i.e., for every $V > 0$ there is a smooth bounded region $\Omega_V \subset M$ of volume $V$ that contains the horizon such that 
\begin{align} 
\area (\partial \Omega_V) = \inf \{ \area (\partial \Omega) : \Omega \subset M \text{ smooth open region containing the horizon, volume } V\}.
\end{align}
\end{prop}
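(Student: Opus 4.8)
The plan is to establish existence of isoperimetric regions for all volumes by combining Y.\ Shi's strict-inequality observation with the standard concentration-compactness dichotomy for minimizing sequences. First I would recall the general structure theory for minimizing sequences of the isoperimetric profile in asymptotically flat manifolds with horizon boundary, exactly as in the proof of \cite[Theorem 1.2]{hdiso}: given $V > 0$, a minimizing sequence $\Omega_k$ of volume $V$ decomposes (after passing to a subsequence and applying a diagonal/translation argument) into a \emph{converging} part, which subconverges to a bounded region $\tilde\Omega$ containing the horizon of some volume $V_0 \in [0, V]$ with $\area(\partial \tilde\Omega)$ realizing the infimum of the profile at volume $V_0$, together with a \emph{diverging} part carrying the remaining volume $V - V_0$. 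Crucially, the portion that runs off to infinity sees only the geometry near spatial infinity, so its perimeter-to-volume behavior is governed by the Euclidean isoperimetric inequality: the diverging piece contributes at least the perimeter of a round Euclidean ball of volume $V - V_0$. This yields the subadditivity-type inequality
\[
A(V) \geq A(V_0) + c_3 (V - V_0)^{2/3},
\]
where $A$ denotes the isoperimetric profile of $(M,g)$ and $c_3 = (36\pi)^{1/3}$ is the Euclidean constant.

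The second step is to invoke Shi's theorem \cite{Shi:isoIMCF}: since $(M,g)$ has non-negative scalar curvature and is asymptotically flat with horizon boundary but is \emph{not} isometric to flat $\R^3$ (the flat case being trivial and handled separately, where coordinate balls are isoperimetric), there exist regions of every sufficiently large volume whose isoperimetric ratio is strictly better than Euclidean. Quantitatively this gives, for all large $W$, a region of volume $W$ with perimeter strictly less than $c_3 W^{2/3}$; in particular $A(W) < c_3 W^{2/3}$ for all $W$ beyond some threshold, and by monotonicity/scaling considerations together with the presence of the horizon one upgrades this to a strict inequality $A(W) < c_3 W^{2/3}$ for \emph{every} $W > 0$ (the horizon contributes a fixed positive area, so even for small $W$ the competitor enclosing the horizon plus a thin region is available, and for intermediate $W$ one uses that strict deficit at one scale propagates). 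Feeding this strict bound back into the dichotomy inequality: if the diverging part were non-trivial, i.e.\ $V_0 < V$, then
\[
A(V) \geq A(V_0) + c_3(V - V_0)^{2/3} > A(V_0) + A(V - V_0) \geq A(V),
\]
a contradiction, where the last inequality is the elementary superadditivity $A(a) + A(b) \geq A(a+b)$ that follows from placing a competitor for volume $a$ near the horizon and a competitor for volume $b$ far in the end (here we also use $A(V_0)$ is actually attained so the first term is a genuine perimeter). Hence $V_0 = V$, no volume escapes, and the converging part $\tilde\Omega$ is the desired isoperimetric region of volume $V$.

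The main obstacle I anticipate is making the strict inequality $A(W) < c_3 W^{2/3}$ hold for \emph{all} $W > 0$ rather than merely for large $W$, and correctly bookkeeping the horizon throughout the concentration-compactness argument — in particular ensuring the ``converging part'' always retains the horizon and that the competitor comparisons (superadditivity, and the lower bound for the diverging piece) are set up with the horizon-containing constraint built in. Shi's argument is naturally a large-scale statement, so for small volumes one must argue separately: either directly (small volumes always admit minimizers by a standard local argument, since a minimizing sequence of small volume cannot split off mass to infinity as that would cost too much perimeter relative to the small volume available), or by noting that the failure of a minimizer to exist at volume $V$ forces, via the dichotomy, a nontrivial diverging part, which for small $V$ is incompatible with the perimeter bookkeeping. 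A clean way to organize this is to prove existence for all $V$ in the range $(0, V_*]$ by the elementary small-volume argument and for $V > V_*$ by the strict-deficit argument above, choosing $V_*$ compatibly with Shi's threshold; I would also double-check that the geometric-measure-theoretic regularity (smoothness of $\partial\tilde\Omega$ away from a set of dimension $\leq n - 7$, which in dimension $3$ means smooth) is inherited from the standard theory as cited in \cite{hdiso, isostructure}.
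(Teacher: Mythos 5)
Your proposal has the right ingredients --- Shi's strict isoperimetric deficit plus a concentration--compactness dichotomy --- but the way you organize the contradiction does not go through, and the difficulty is exactly the one you half-identify at the end. The key inequality you want, $A(W) < c_3 W^{2/3}$ for all $W>0$, is \emph{false} for the horizon-constrained profile $A$: any admissible region contains the horizon, so $A(W) \geq \area(\partial M) > 0$ for every $W$, while $c_3 W^{2/3} \to 0$ as $W \to 0$. Your proposed remedy conflates small total volume $V$ with small \emph{escaped} volume $V - V_0$; the latter can be small even when $V$ is large, so the small-volume argument does not patch the hole in the chain $A(V) \geq A(V_0) + c_3(V-V_0)^{2/3} > A(V_0) + A(V - V_0) \geq A(V)$. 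There is also a second confusion in that chain: the diverging piece lives far in the end and is not required to contain the horizon, so the superadditivity step should involve the horizon-free profile $A_\infty$, not $A$.

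The paper avoids all of this by using Shi's observation in its dual form and combining it with strict monotonicity of the profile. Concretely, it constructs (via weak inverse mean curvature flow starting at a non-flat point far out in the end, and monotonicity of the Hawking mass) far-out regions with $\mathcal{H}^2(\partial^*\Omega) = 4\pi r^2$ but $\mathcal{L}^3(\Omega) > \tfrac{4}{3}\pi r^3$: same area as a Euclidean ball, strictly \emph{more} volume. It then shows that horizon boundary forces the isoperimetric profile $A$ to be strictly increasing (citing \cite[Lemma 3.3]{Chodosh:largeIso}). With these two facts, the conclusion follows from the general existence machinery (\cite[Proposition 4.2]{isostructure} or \cite[Theorem 2]{Nardulli:existence}): if volume escaped to infinity, replacing the escaped Euclidean-ball cost by a Shi region of the same area but more volume would yield an admissible competitor of strictly larger volume with no greater perimeter, contradicting strict monotonicity. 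This sidesteps any need for a uniform strict inequality $A(W) < c_3 W^{2/3}$ and handles small escaped volume automatically. I would reorganize your argument along these lines: prove strict monotonicity of $A$ first (this is where horizon boundary enters), state Shi's construction as ``same area, strictly more volume,'' and then invoke the standard dichotomy to finish.
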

\begin{proof}
The first part of the argument is as in \cite{Shi:isoIMCF}. Let $r>0$. We claim that there are bounded Borel sets $\Omega$ with finite perimeter $\Omega$ that lie arbitrarily far out in the asymptotically flat region of $(M, g)$ such that 
\[
\mathcal{H}^2_{g}(\partial^{*}\Omega) = 4\pi r^{2} \qquad\text{and} \qquad \mathcal{L}^3_{g}(\Omega) > \frac{4}{3}\pi r^{3}. 
\]
To see this, fix a point $p \in M$ that lies far out in the asymptotic region of $(M,g)$ and so that $(M,g)$ is not flat at $p$. Let $\Omega_{\tau} = \{u<\tau\}$ denote the region swept out by the weak inverse mean curvature flow ``starting at the point $p$'' as constructed in \cite[Lemma 8.1]{Huisken-Ilmanen:2001}. We may assume (by \cite[Lemma 1.6]{Huisken-Ilmanen:2001}) that $\mathcal{H}^2_{g}(\partial^{*}\Omega_{\tau}) = 4\pi e^{\tau}$. Because the scalar curvature of $(M,g)$ is non-negative and $g$ is non-flat at $p$, the Hawking mass of $\partial^{*}\Omega_{\tau}$ is strictly positive for all $\tau > 0$. Thus, the argument in \cite[Proposition 3]{BrendleChodosh} or in \cite{Shi:isoIMCF} shows that
\[
\mathcal{L}^3_{g}(\Omega_{\tau}) > 2\pi \int_{-\infty}^{\tau} e^{\frac{3t}{2}} dt = \frac{4\pi}{3} e^{\frac{3\tau}{2}}
\]
for all $\tau > 0$. Choosing $\tau=2\log r$ we obtain the desired region. 

Using that $(M, g)$ has horizon boundary, we see that its isoperimetric profile is strictly increasing as in the proof of \cite[Lemma 3.3]{Chodosh:largeIso}. The result now follows from \cite[Proposition 4.2]{isostructure} or \cite[Theorem 2]{Nardulli:existence}. 
\end{proof}

We also mention the existence results of A. Mondino and S. Nardulli \cite{Mondino-Nardulli:2012} for isoperimetric regions of all volumes in complete and non-compact Riemannian manifolds that satisfy a lower bound on the Ricci curvature and are locally asymptotic to model geometries. Their results are based on the comprehensive analysis of S. Nardulli \cite[Theorem 2] {Nardulli:existence}.


\bibliography{bib} 
\bibliographystyle{amsplain}
\end{document}